\newtheoremstyle{introduction}
  {3 pt}
  {4 pt}
  {\itshape}
  {}
  {\bfseries}
  {:}
  {.5em}
  {}
\theoremstyle{plain}
\newtheorem{thm}{Theorem}[section]
\newtheorem{corollary}[thm]{Corollary}
\newtheorem{proposition}[thm]{Proposition}
\newtheorem{lemma}[thm]{Lemma}
\newtheorem{example}[thm]{Example}
\theoremstyle{introduction}
\newtheorem*{thm*}{Theorem} 
\theoremstyle{definition}
\newtheorem{definition}[thm]{Definition}
\newtheorem{remark}[thm]{Remark}
\newcommand\Hom		{\mathrm{Hom}}
\newcommand\dSet	{\text{dSet}}
\newcommand\Set     {\text{Set}}
\newcommand\sSet    {\text{sSet}}
\newcommand\Ab      {\mathrm{AbGr}}
\newcommand\sHom    {\underline{\text{sHom}}}
\newcommand\Symm    {\text{Sym}}
\newcommand\Oper    {\text{Oper}}
\newcommand\F       {\mathcal{F}}
\newcommand\E       {\mathcal{E}}
\newcommand\EsSet   {E_\infty\text{-spaces}}
\newcommand{\ds}{\displaystyle}
\def\to{\rightarrow}
\title{Dendroidal sets as models for connective spectra}
\author[1]{Matija Ba\v{s}i\'{c}}
\author[2]{Thomas Nikolaus}
\affil[1]{Prirodoslovno-matemati\v{c}ki fakultet, Sveu\v{c}ili\v{s}te u Zagrebu, Croatia}
\affil[2]{Fakultät für Mathematik, Universität Regensburg,  Germany}
\begin{document}
\maketitle
\renewcommand{\thefootnote}{\fnsymbol{footnote}} 
\footnotetext{\emph{Key words}: dendroidal sets, connective spectra, model structure.} 
\footnotetext{\emph{MSC classes}: 55P47, 55P48, 18D50.}     
\renewcommand{\thefootnote}{\arabic{footnote}} 

\begin{abstract} \noindent
Dendroidal sets have been introduced as a combinatorial model for homotopy coherent operads. We introduce the notion of fully Kan dendroidal sets and show that there is a model structure on the category of dendroidal sets with fibrant objects given by fully Kan dendroidal sets. Moreover we show that the resulting homotopy theory is equivalent to the homotopy theory of connective spectra.
\end{abstract}

\section{Introduction}

The notion of a dendroidal set is an extension of the notion of a simplicial set, introduced to serve as a combinatorial model for $\infty$-operads \cite{MoerW07}.
The homotopy theory of $\infty$-operads is defined as an extension of Joyal's
homotopy theory of $\infty$-categories to the category of dendroidal sets. More precisely there is a class of dendroidal sets called inner Kan dendroidal sets (or simply $\infty$-operads) which are defined
analogously to inner Kan complexes (also known as $\infty$-categories) by lifting conditions \cite{MW09}. These objects form fibrant objects in a model structure on the category of dendroidal sets, which is
Quillen equivalent to coloured topological operads as shown in a series of papers by Cisinski and Moerdijk \cite{MC10, MC11, MC11a}.

The category of dendroidal sets behaves in many aspects similarly to the category of simplicial sets. One instance of this analogy is the model structure described above
generalizing the Joyal model structure. Another instance is the fact that there is a nerve functor from (coloured) operads into dendroidal sets generalizing the nerve functor from categories into simplicial sets. But there are two important aspects of the theory of simplicial sets that have not yet a counterpart in the theory of dendroidal sets:
\vspace{-0.3cm}
\begin{enumerate}
\setlength{\itemsep}{-0.5ex}
\item\label{pointKan} Kan complexes and the Kan-Quillen model structure on simplicial sets\footnote{In fact there is a model structure  constructed by Heuts \cite{Heuts1} that could be seen as a counterpart. We comment on this model structure later.}.
\item\label{pointGeo} The geometric realization of simplicial sets.
\end{enumerate}
\vspace{-0.3cm}
The two aspects are closely related since the geometric realization $|-|: \sSet \to \text{Top}$ is a left Quillen equivalence with respect to the Kan-Quillen model structure on simplicial sets. With respect to the Joyal model structure the geometric realization functor is still a left Quillen functor (but not an equivalence), as follows from the fact that the Kan-Quillen model structure is a left Bousfield localization of the Joyal model structure. The problem of finding counterparts for these structures in the theory of dendroidal sets has been raised almost with the introduction of dendroidal sets, see e.g. \cite[Section 5]{Weiss2010}. \\

In the present paper we construct analogues of \ref{pointKan} and \ref{pointGeo} for the category of dendroidal sets.
More precisely we introduce the notion of a fully Kan dendroidal set which (in analogy to a Kan complex in simplicial sets) has fillers for all horns of dendroidal sets and not just for inner horns (as for inner Kan dendroidal sets), see Definition \ref{deffullyKan}. As a first result we show that a certain subclass of fully Kan dendroidal sets, called strictly fully Kan dendroidal sets, spans a category equivalent to the category of Picard groupoids, Corollary  \ref{corollaryPic}. This already provides a hint as to what the geometric realization might be since it is well known that Picard groupoids model all connective spectra with vanishing $\pi_n$ for $n\geq 2$, \cite{may2008, Osorno2012}.

In fact, fully Kan dendroidal sets model all connective spectra. This is the main result of this paper:
\begin{thm*}[Theorems \ref{simplicial}, \ref{THMFIBRANT} and \ref{thm_equivalence}] ~
There is a model structure on dendroidal sets, called the \emph{stable model structure}, with fibrant objects given by fully Kan dendroidal sets which is a left Bousfield localization of the Cisinski-Moerdijk model structure. Moreover the stable model structure on dendroidal sets is Quillen equivalent to connective spectra.
\end{thm*}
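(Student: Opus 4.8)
The plan is to prove the three constituent statements in turn: the existence of the stable model structure as a left Bousfield localization (Theorem \ref{simplicial}), the description of its fibrant objects (Theorem \ref{THMFIBRANT}), and the Quillen equivalence with connective spectra (Theorem \ref{thm_equivalence}).

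\textbf{The stable model structure.} The category $\dSet$ is the presheaf category on the tree category $\Omega$, hence a Grothendieck topos and in particular combinatorial, and the Cisinski--Moerdijk operadic model structure on it is combinatorial and left proper, with cofibrations the normal monomorphisms and fibrant objects the $\infty$-operads. I would therefore invoke the general existence theorem for left Bousfield localizations of combinatorial left proper model categories at a set of maps, and define the \emph{stable model structure} to be the localization of the operadic one at the set $S$ of (normalized) outer-horn inclusions $\Lambda^{a}[T]\hookrightarrow\Omega[T]$. By construction it is a left Bousfield localization of the Cisinski--Moerdijk structure, it has the same (normal monomorphism) cofibrations, and it is left proper; that it is moreover a simplicial model category follows because the Cisinski enrichment $\sHom(-,-)$ is untouched by the localization. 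All of this is bookkeeping once the localization theorem is in hand.

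\textbf{Fibrant objects are the fully Kan dendroidal sets.} The fibrant objects of the stable structure are the $S$-local $\infty$-operads, and these must be matched with the fully Kan dendroidal sets of Definition \ref{deffullyKan}. Both inclusions reduce, via standard simplicial-model-category manipulations, to one combinatorial input: the class of \emph{dendroidal anodynes} generated by \emph{all} horn inclusions (inner and outer) is closed under pushout-product with the boundary inclusions $\partial\Delta[n]\hookrightarrow\Delta[n]$ (equivalently, with all normal monomorphisms). Granting this, a fully Kan $X$ has the right lifting property against every dendroidal anodyne, so each $\sHom(\Omega[T],X)\to\sHom(\Lambda^{a}[T],X)$ is a trivial Kan fibration; hence $X$ is $S$-local, and it is an $\infty$-operad since it has inner fillers. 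Conversely, for an $S$-local $\infty$-operad $X$ and a horn $\Lambda^{a}[T]\hookrightarrow\Omega[T]$ in $S$, the map $\sHom(\Omega[T],X)\to\sHom(\Lambda^{a}[T],X)$ is both a fibration and a weak equivalence of Kan complexes, hence surjective on $0$-simplices, i.e. every horn $\Lambda^{a}[T]\to X$ extends to $\Omega[T]$; so $X$ is fully Kan. The substance is thus the pushout-product stability for the outer dendroidal anodynes --- the dendroidal counterpart of the corresponding fact for Kan anodynes in $\sSet$, and closely related to a Joyal-type statement that an $\infty$-operad all of whose operations are ``invertible'' is fully Kan. Since dendroidal outer horns are combinatorially more delicate than inner ones, I expect this to be one of the two main technical obstacles.

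\textbf{Quillen equivalence with connective spectra.} I would obtain this from a zig-zag of Quillen equivalences linking $\dSet$ with the stable structure to the Bousfield--Friedlander model for connective spectra through Segal's $\Gamma$-spaces $\gSet$ (and conceptually through grouplike $E_\infty$-spaces). The only new step is a Quillen equivalence between $\dSet$ with the stable structure and the very special (stable) model structure on $\gSet$. I would build it from a functor between the indexing categories matching the corolla $C_n$ with the $n$-th object of $\Gamma$, producing a colimit-preserving functor $\dSet\to\gSet$ whose right adjoint is a ``dendroidal $\Gamma$-nerve'', verify it is left Quillen for the two localized structures, and then prove it is a Quillen equivalence. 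For the last point the plan is to equip a fully Kan dendroidal set $X$ with homotopy groups $\pi_{n}(X)$ defined combinatorially by horn-filling, exactly as simplicial homotopy groups are defined for Kan complexes; establish the expected long exact sequences and a Whitehead theorem, so that the stable weak equivalences between fibrant objects are precisely the $\pi_{*}$-isomorphisms; identify $\pi_{n}(X)$ with the homotopy groups of the associated $\Gamma$-space; and compute the comparison functor on the generators $\Omega[C_n]$. These together force the derived unit and counit to be equivalences. One then composes with the classical Quillen equivalence between very special $\Gamma$-spaces and connective spectra (Bousfield--Friedlander; equivalently Segal's and May's recognition principles for grouplike $E_\infty$-spaces). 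The decisive difficulty is this comparison between the combinatorial homotopy groups of fully Kan dendroidal sets and stable homotopy groups, together with checking that the comparison functor is homotopically well behaved on all of $\dSet$ and not merely on fibrant objects; this, with the outer-horn combinatorics above, is where the real work lies. As a consistency check, the $2$-truncated version of the whole picture recovers the equivalence of Corollary \ref{corollaryPic} among strictly fully Kan dendroidal sets, Picard groupoids, and connective spectra with $\pi_n=0$ for $n\geq 2$.
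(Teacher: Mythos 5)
Your three-part decomposition matches the paper's, but at each stage you defer to a claim that is either unavailable or is precisely the content the paper spends most of its length establishing, and some of these claims are genuinely problematic. First, you localize the Cisinski--Moerdijk structure and assert the result is simplicial because ``the enrichment is untouched''; but the Cisinski--Moerdijk model structure is \emph{not} simplicial (the paper notes this explicitly), so there is nothing to be untouched. The paper instead localizes Heuts' covariant model structure --- which is simplicial and already sits between Cisinski--Moerdijk and the stable structure --- at the single root horn $\Lambda^b[C_2]\to\Omega[C_2]$, and inherits simpliciality and the $\sHom$-characterization of local objects from there. Second, your identification of the fibrant objects rests entirely on the assertion that the saturated class generated by \emph{all} horn inclusions is closed under pushout-product with normal monomorphisms. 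This is not a citable lemma; it is (a strengthening of) the technical heart of the paper, which never proves it in that generality. Instead the paper proves that one specific pushout-product, of $\Lambda^b[C_2]\to\Omega[C_2]$ with simplex boundary inclusions, is binary extended left anodyne (Lemma \ref{partIlemma2}) and then runs a chain of combinatorial reductions (Propositions \ref{PROP1}, \ref{PROP2}, \ref{PROP3}, via extended corollas and the codimension argument) to pass from locality at that single map to fillers for all outer horns. As written, your direction ``fully Kan $\Rightarrow$ local'' has no proof, and your converse also silently uses that $\sHom(\Omega[T],X)\to\sHom(\Lambda^{a}[T],X)$ is a Kan fibration, which again needs a pushout-product statement in a simplicial model structure you have not exhibited.

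Third, for the comparison with connective spectra you propose a new left Quillen functor $\dSet\to\gSet$ induced by matching $C_n$ with the $n$-th object of $\Gamma$, together with a from-scratch theory of combinatorial homotopy groups, long exact sequences and a Whitehead theorem for fully Kan dendroidal sets. No such functor of indexing categories is constructed, and the naive candidate does not exist: the Segal structure maps in $\Gamma$ do not arise from morphisms between corollas in $\Omega$. The paper avoids all of this by reusing Heuts' established Quillen equivalence between the covariant structure and $E_\infty$-spaces; the only new inputs are Lemma \ref{unfibrant} (a fibrant $E_\infty$-space $X$ is group-like iff $Un_\Gamma(X)$ is fully Kan, proved by identifying the derived image of the localizing map with the shear map) and a short formal lemma showing that a Quillen equivalence descends to left Bousfield localizations whose fibrant objects correspond under the right adjoint. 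A workable plan should either follow that route or actually supply proofs of your pushout-product and $\Gamma$-space comparison claims, each of which is a paper-sized task in itself.
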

The stable model structure has good formal properties, i.e. it is left proper, simplicial, tractable and combinatorial. Furthermore it allows for an explicit characterization of weak equivalences between fibrant objects.
The Quillen equivalence between dendroidal sets and connective spectra factors through the category of group-like $E_\infty$-spaces.

The proof of our theorem is based on constructions of Heuts \cite{Heuts1, Heuts2}. Heuts establishes a model structure on dendroidal sets, called the \emph{covariant model structure}, which lies between the Cisinski-Moerdijk model structure and the stable model structure.
Although we had at first obtained the stable model structure by different techniques, in this paper we construct it as a left Bousfield localization of the covariant model structure. This enables us to directly use another main result of Heuts: there is a Quillen equivalence between the covariant model structure and the model category of $E_\infty$-spaces. Our Quillen equivalence (Theorem \ref{thm_equivalence}) can then be derived by showing that the stable localization on the side of dendroidal sets corresponds to the group-like localization of $E_\infty$-spaces, see section \ref{sec:equivalence}. One disadvantage of this construction is that establishing the explicit description of fibrant objects is technically demanding, see sections \ref{partI} - \ref{partIII}. \\

Finally we want to mention that our results not only show that fully Kan dendroidal sets form a model for Picard $\infty$-groupoids but also that the $\infty$-category of Picard $\infty$-groupoids is a full reflective subcategory (in the sense of Lurie \cite[Remark 5.2.7.9]{HTT}) of the $\infty$-category of $\infty$-operads. The functor associating a spectrum to a dendroidal set will be further investigated in \cite{NikKtheory} and related to the geometric realization of simplicial sets. \\

\noindent {\bf Acknowledgements.} The authors thank Gijs Heuts, Ieke Moerdijk, and Markus Spitzweck for helpful discussion and Konrad Waldorf for comments on the draft. Special thanks to Urs Schreiber for the suggestion to look at fully Kan dendroidal sets in order to find a geometric realization.
The first author would also like to thank the Croatian Science Foundation for financial support and the Radboud University Nijmegen for its kind hospitality during the period in which this article was written. 

\section{Dendroidal sets and model structures}
In this section we will review some facts from the theory of dendroidal sets without always giving explicit references. For more details we refer the reader to the lecture notes \cite{MoerBar} and the papers \cite{MoerW07, MW09}.

First, we briefly recall the definition of the category of dendroidal sets. It is based on the notion of trees. 
A (finite rooted) tree is a non-empty connected finite graph with no loops equipped with a distinguished outer edge called the root and a (possibly empty) set of 
outer edges not containing the root called leaves. By convention, the term vertex of a tree refers only to non-outer vertices. 
Each tree $T$ generates a symmetric, coloured operad $\Omega(T)$ (in the category of sets) which has the edges of $T$ as colours and a generating operation for every vertex.
Using this construction we can define the category $\Omega$ which has finite rooted trees as objects and the set of morphisms between trees 
$T$ and $T'$ is given by the set of operad maps between operads $\Omega(T)$ and  $\Omega(T')$. 
Similarly to the definition of simplicial sets we define the category of dendroidal sets as the presheaf category on $\Omega$:
\begin{equation*}
\dSet := [\Omega^{op},\Set].
\end{equation*}
The dendroidal set represented by a tree $T$ is denoted by $\Omega[T]$. In particular for the tree $~|~$ with one edge which is also a leaf and a root, we set $\eta:= \Omega[~|~]$. The inclusion of $\Omega$ into the category of coloured, symmetric operads induces a fully faithful functor $N_d: \Oper \to \dSet$ called the dendroidal nerve.
We have $N_d(\Omega(T)) = \Omega[T]$.

There is a fully faithful embedding of the simplex category $\Delta$ into $\Omega$ by considering finite linear ordered sets as linear trees. This induces an adjunction
\begin{equation*}
i_!: \xymatrix{\sSet \ar@<0.3ex>[r] & \dSet: i^* \ar@<0.7ex>[l]}
\end{equation*}
where the left adjoint $i_!$ is fully faithful (there is also a further right adjoint $i_*$ which does not play a role in this paper).

The theory of dendroidal sets behaves very much like the theory of simplicial sets. In particular, for each tree $T$ there is a familiy of subobjects of $\Omega[T]$ in $\dSet$ called faces. There are two types of faces: {the} inner faces which are labeled by 
{the} inner edges of $T$ and {the} outer faces which are labeled by {the} vertices of $T$ with exactly one inner edge attached to it. The boundary $\partial \Omega[T]$ of $\Omega[T]$ is by definition the union of all faces of $T$. A horn is defined as the union of 
all but one {face}, see \cite{MW09} or \cite{MoerBar}. We distinguish inner and outer horns and we write $\Lambda^a[T]$ where $a$ is an inner edge or an outer vertex of $T$.

\begin{definition}
Let $T$ be a tree with at least 2 vertices. We call a horn $\Lambda^{a}[T] \subset \Omega[T]$ a \emph{root horn}, if $a$ is the unique vertex attached to the root.

The corolla $C_n$ is the tree with one vertex and $n$ leaves. There are $n+1$ faces of a corolla $C_n$, one for each colour (edge). The horns are the unions of all but one colour, denoted by $\Lambda^a[C_n]$ 
{where} $a$ {is} the omitted colour. We call this horn a \textit{leaf horn} if $a$ is the root (i.e. the leaf horn is the inclusion of the leaves) and a \textit{root horn} otherwise. 
\end{definition}

Note that most trees do not have a root horn. A root horn can only exist, if the tree is a corolla or the whole tree is concentrated over a single leaf of the root vertex.  

\begin{definition}\label{def_inner}
A dendroidal set $D$ is called \emph{inner Kan} if it admits fillers for all inner horns, 
i.e. for any inner edge $e$ of a tree $T$ and a morphism $\Lambda^{e}[T] \to D$ there is a morphism $\Omega[T] \to D$ that renders the diagram
\begin{equation*}
\xymatrix{
\Lambda^{e}[T] \ar[d]\ar[r] & D \\
\Omega[T] \ar[ru] &
}
\end{equation*}
commutative. A \emph{dendroidal Kan complex} is a dendroidal set that admits fillers for all horns {that are not root horns}.
\end{definition}

The class of inner Kan dendroidal sets has been introduced and studied in \cite{MW09, MC10} and the class of dendroidal Kan complexes in \cite{Heuts2}. The main results are

\begin{thm}[Cisinski-Moerdijk]
There is a left proper, combinatorial model structure on the category of dendroidal sets with cofibrations given by normal monomorphisms and fibrant objects given by inner Kan dendroidal sets. This model category is Quillen equivalent to the model category of coloured topological operads.
\end{thm}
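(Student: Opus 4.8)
The plan is to produce the model structure by Cisinski's machinery for cofibrantly generated model structures on a presheaf category, and then to establish the comparison with topological operads through an operadic homotopy coherent nerve. Since $\dSet = [\Omega^{op},\Set]$ is a presheaf category it is locally presentable, so fixing the cofibrations to be the \emph{normal monomorphisms} --- the weakly saturated class generated by the boundary inclusions $\partial\Omega[T]\hookrightarrow\Omega[T]$ for $T\in\Omega$ --- already yields a cofibrantly generated (in fact tractable) weak factorisation system. The candidate class of trivial cofibrations is the \emph{inner anodynes}, defined as the smallest weakly saturated class containing the inner horn inclusions $\Lambda^e[T]\hookrightarrow\Omega[T]$; the fibrant objects will then be precisely the dendroidal sets having the right lifting property against all inner horn inclusions, that is, the inner Kan dendroidal sets.

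The technical heart of the construction is the pushout-product (Leibniz) property: whenever $i$ is a normal monomorphism and $j$ is inner anodyne, the pushout-product $i\mathbin{\square}j$ is again inner anodyne. By standard reductions it suffices to prove this for $i$ a boundary inclusion and $j$ an inner horn inclusion, i.e. that $(\partial\Omega[S]\hookrightarrow\Omega[S])\mathbin{\square}(\Lambda^e[T]\hookrightarrow\Omega[T])$ is inner anodyne. This is the dendroidal analogue of Joyal's theorem that inner anodyne maps of simplicial sets are closed under smash product with monomorphisms, and the proof proceeds by filtering $\Omega[S]\times\Omega[T]$ through its non-degenerate dendrices --- the \emph{shuffles} of $S$ and $T$ --- arranged in an order for which each successive attaching map is an inner horn inclusion. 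I expect this step, carried out in \cite{MC10}, to be the main obstacle: one must keep track of exactly which inner edge is produced at each stage of the filtration and verify that it is genuinely inner rather than a leaf or the root, which is an intricate piece of combinatorics on trees.

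Granting the Leibniz property, the existence of a left proper combinatorial model structure follows from Cisinski's existence theorem (alternatively from Jeff Smith's recognition theorem): one takes as weak equivalences the maps that become invertible in the localisation determined by the inner anodynes, the pushout-product property ensures the structure is even simplicial (tensored over $\sSet$ with its Joyal model structure), fibrations between fibrant objects are the inner fibrations, and fibrant objects are the inner Kan dendroidal sets as desired. Combinatoriality is immediate since the generating (acyclic) cofibrations form sets and $\dSet$ is locally presentable; left properness reduces, via a further application of the pushout-product property, to the statement that pushout along a normal monomorphism preserves weak equivalences.

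For the Quillen equivalence with coloured topological operads I would factor it through simplicial operads, which carry a Bergner-type model structure known to be Quillen equivalent to topological operads. There is an adjunction $W_! : \dSet \rightleftarrows \mathbf{sOper}$ whose right adjoint is the homotopy coherent dendroidal nerve, built from the Boardman--Vogt $W$-construction and generalising the classical adjunction between simplicial categories and simplicial sets given by rigidification and the homotopy coherent nerve. One checks this is a Quillen pair by verifying that $W_!$ carries normal monomorphisms to cofibrations and inner anodynes to trivial cofibrations --- again a consequence of the shuffle analysis above. That it is a Quillen equivalence is then proved by showing the derived unit and counit are weak equivalences: homotopical full faithfulness of the nerve is a mapping-space computation on nerves of fibrant simplicial operads, while essential surjectivity follows because every normal dendroidal set is a homotopy colimit of representables $\Omega[T]$, on which the comparison reduces to an explicit contractibility statement for the $W$-construction of a tree. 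The hardest part here mirrors Lurie's rigidification argument: showing the derived counit $W_!\,\mathrm{hc}N_d(\mathcal P)\to\mathcal P$ is a weak equivalence requires describing the mapping spaces of $W_!(D)$ by a necklace-type combinatorics of trees (the content of \cite{MC11, MC11a}).
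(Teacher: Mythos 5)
The paper offers no proof of this statement: it is quoted verbatim as a theorem of Cisinski and Moerdijk and imported by citation to \cite{MC10, MC11, MC11a}, so there is nothing internal to compare your argument against. Measured against the actual proofs in those references, your outline is a faithful reconstruction of the strategy: the cofibrations are the normal monomorphisms, the key combinatorial input is the stability of inner anodynes under pushout-product with normal monomorphisms (proved by a shuffle filtration), existence of the model structure comes from Cisinski's machinery, and the comparison with topological operads is factored through simplicial operads via the adjunction $W_! \dashv hcN_d$ (the left adjoint is what this paper calls $hc\tau_d$), with the derived counit handled by a mapping-space computation.

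Two caveats are worth flagging. First, the pushout-product in question is taken with respect to the Boardman--Vogt tensor product $\otimes$ on $\dSet$, not the cartesian product you write; this distinction matters because the combinatorics of shuffles, and indeed the good behaviour of $\otimes$ itself, are considerably more delicate than in the simplicial case. Second, your claim that the pushout-product property makes the structure ``even simplicial'' should be treated with care: the resulting enrichment is over $\sSet$ with the Joyal model structure, and the paper explicitly points out that the Cisinski--Moerdijk model structure is \emph{not} simplicial (in the standard Kan--Quillen sense), in contrast to the covariant model structure. Neither caveat undermines the overall architecture of your sketch, which matches the cited proof.
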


\begin{thm}[Heuts]
There is a simplicial left proper, combinatorial model structure on the category of dendroidal sets with cofibrations given by normal monomorphisms and fibrant 
objects given by dendroidal Kan complexes. This model structure is called the \emph{covariant model structure} and is Quillen equivalent to the 
{standard}
model category of $E_\infty$-spaces.
\end{thm}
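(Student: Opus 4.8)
\noindent The plan is to realize the covariant model structure as a left Bousfield localization of the Cisinski--Moerdijk model structure, identify its fibrant objects, and then compare it with $E_\infty$-spaces. The Cisinski--Moerdijk structure is left proper and combinatorial, and it becomes simplicial once $\dSet$ is equipped with the tensoring $D\otimes K:=D\times i_!K$ over $\sSet$. Hence, by the standard theory of left Bousfield localizations of combinatorial simplicial model categories, for any set $S$ of normal monomorphisms there is a left proper, combinatorial, simplicial model structure on $\dSet$ with the same cofibrations and with fibrant objects the Cisinski--Moerdijk-fibrant objects that are $S$-local. Take $S$ to be the set of all outer horn inclusions $\Lambda^{v}[T]\hookrightarrow\Omega[T]$ in which $v$ is a vertex of $T$ other than the one attached to the root, and call the resulting structure the covariant model structure; its trivial cofibrations are then the weakly saturated class generated by the inner horn inclusions together with $S$, the \emph{covariant anodyne} maps. (Alternatively one can build the structure directly via Cisinski's recognition theorem for presheaf categories, taking the covariant anodyne maps as the trivial cofibrations.)

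\noindent For the identification of fibrant objects, one inclusion is formal: each map in $S$ is a cofibration and a weak equivalence of the localized structure, so a fibrant object $D$ lifts against all of them, hence has fillers for every non-root outer horn; being Cisinski--Moerdijk-fibrant it also has fillers for inner horns, so $D$ is a dendroidal Kan complex. The converse is the substantial point. Given a dendroidal Kan complex $D$, one must show that for each $\bigl(\Lambda^{v}[T]\hookrightarrow\Omega[T]\bigr)\in S$ and each $n$ the restriction $\underline{\mathrm{Map}}(\Omega[T],D)\to\underline{\mathrm{Map}}(\Lambda^{v}[T],D)$ is a trivial Kan fibration, equivalently that $D\to *$ lifts against the pushout--product of $\Lambda^{v}[T]\hookrightarrow\Omega[T]$ with $\partial\Delta[n]\hookrightarrow\Delta[n]$. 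This holds once one knows that such pushout--products again lie in the weakly saturated class generated by inner and non-root outer horn inclusions, i.e.\ that the covariant anodyne maps satisfy the pushout--product axiom against normal monomorphisms. Proving this requires a careful filtration of $\Omega[T]\times\Delta[n]$ by nondegenerate shuffles and a vertex-by-vertex check that each attached cell is a pushout of an inner or a non-root outer horn; this Joyal-style combinatorics is the step I expect to be the main obstacle, and it is exactly the bookkeeping the authors describe as technically demanding.

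\noindent Finally, to compare with $E_\infty$-spaces, fix a $\Sigma$-cofibrant $E_\infty$-operad $\mathcal E$ in $\sSet$ and model $E_\infty$-spaces as $\mathcal E$-algebras with the model structure transferred from $\sSet$. Build an adjunction $\tau\colon\dSet\rightleftarrows\mathcal E\text{-algebras}\colon N$ by choosing, functorially in $T\in\Omega$, a cofibrant $\mathcal E$-algebra $\mathcal E[T]$ presenting ``the operad generated by $T$'' --- so $\mathcal E[\eta]$ is the free $\mathcal E$-algebra on a point and $\mathcal E[C_n]$ rigidifies the $n$-ary operations --- and setting $\tau=\mathrm{Lan}_{\Omega}\mathcal E[-]$, $N(A)_T=\mathrm{Hom}(\mathcal E[T],A)$ (computed in $\mathcal E$-algebras). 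That this is a Quillen pair reduces, since $\tau$ preserves colimits and the cofibrations are generated by boundary and horn inclusions, to checking that $\tau$ sends each covariant anodyne generator to a trivial cofibration of $\mathcal E$-algebras, which unwinds to a property of $\mathcal E$ because $\mathcal E$-algebras are built from free ones. For the Quillen \emph{equivalence} I would examine the derived unit and counit directly: the underlying space of $\tau D$ is $D_{\eta}$ (up to weak equivalence when $D$ is fibrant), with the corolla operations supplying the expected $E_\infty$-structure, and on a dendroidal Kan complex $D$ one shows the unit $D\to N\tau D$ is a levelwise weak equivalence using the horn-filling description of $D$; dually the counit at a cofibrant replacement of $NA$ is shown to be a weak equivalence by a cell induction over free $\mathcal E$-algebras. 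Conceptually this expresses the expected fact that the covariant model structure over the terminal dendroidal set $*=N_d(\mathrm{Comm})$ corresponds, under a dendroidal straightening, to algebras over the commutative operad in spaces; making either the straightening or this cell induction precise, together with the anodyne combinatorics above, is where essentially all the work resides.
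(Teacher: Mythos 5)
This statement is not proved in the paper at all: it is Heuts's theorem, imported from \cite{Heuts1, Heuts2} and used as a black box, so there is no internal proof to compare your sketch against. Judged on its own terms, your outline has the right overall shape (a localization/Cisinski-style construction of the model structure, identification of fibrant objects via a pushout--product property for left anodynes, and a straightening-type comparison with $E_\infty$-algebras), but it contains concrete gaps. First, you assert that the Cisinski--Moerdijk structure ``becomes simplicial'' via the cartesian tensoring $D\times i_!K$ and then invoke localization theory for simplicial model categories. This is false: the paper itself points out that the Cisinski--Moerdijk structure is \emph{not} simplicial (for the same reason the Joyal structure is not), and the enrichment that makes the covariant structure simplicial is the one induced by the Boardman--Vogt tensor product $D\otimes i_!K$. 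The compatibility of that tensor product with left anodynes --- that the pushout--product of a non-root horn with a normal monomorphism is again in the saturation of inner and non-root horns --- is precisely one of Heuts's main technical theorems, not something you may assume to get the localization off the ground; you acknowledge this combinatorial step but place it after having already used simpliciality. Second, your claim that the trivial cofibrations of the localized structure are exactly the weak saturation of the inner anodynes together with $S$ is not how left Bousfield localization works; that saturated class is contained in, but in general smaller than, the class of trivial cofibrations, and conflating the two would give the wrong fibrations away from fibrant objects.

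The comparison with $E_\infty$-spaces is where essentially all of the remaining content lies, and your sketch of it is not the construction that actually works. Heuts's equivalence is not a left Kan extension of a functor $T\mapsto\mathcal E[T]$ with nerve $N(A)_T=\Hom(\mathcal E[T],A)$; it is a straightening/unstraightening adjunction $St:\dSet_{/\E_\infty}\rightleftarrows\EsSet:Un$ defined on the slice over a cofibrant resolution $\E_\infty$ of the terminal object, composed with the adjunction $P\dashv\Gamma$ (this is recalled in Section 5 of the paper). The naive nerve you propose would not obviously land in dendroidal Kan complexes, and the verification that the derived unit and counit are equivalences --- which you defer to ``a cell induction over free $\mathcal E$-algebras'' --- is the substance of Heuts's second paper. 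So while your plan identifies the correct pressure points, the two steps you flag as ``where the work resides'' are indeed the theorem, and one of your stated inputs (simpliciality of the unlocalized structure) is wrong as stated.
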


The slogan is that inner Kan dendroidal sets are a combinatorial model for topological operads and dendroidal Kan complexes are a 
model for $E_\infty$-spaces. The weak equivalences are called operadic equivalences in the Cisinski-Moerdijk model structure and 
covariant equivalences in the Heuts model structure. Note in particular that the covariant model structure is simplicial in 
contrast to the Cisinski-Moerdijk model structure. The simplicial enrichment in question is induced by the Boardman-Vogt
type tensor product on the category $\dSet$, see \cite{MW09}.

\section{Fully Kan dendroidal sets }\label{sec:22}

Similarly to the Definition \ref{def_inner} of inner Kan dendroidal sets we give the following definition.
\begin{definition}\label{deffullyKan}
A dendroidal set $D$ is called \emph{fully Kan} if it has fillers for all horn inclusions. This means that for each morphism $\Lambda^a[T] \to D$ (where $a$ is an inner edge or an outer vertex) there is a morphism $\Omega[T] \to D$  rendering the diagram
$$\xymatrix{
\Lambda^a[T] \ar[r]\ar[d] & D \\
\Omega[T] \ar[ru] &
}$$
commutative. $D$ is called \emph{strictly fully Kan} if additionally all fillers for trees $T$ with more than one vertex are unique.
\end{definition} 

\begin{remark} 
\begin{itemize}
\item
A fully Kan dendroidal set is also a dendroidal Kan complex and an inner Kan dendroidal set.
\item
The reader might wonder why we do not impose uniqueness for corolla fillers in the strictly fully Kan condition. The reason is that this forces the underlying simplicial set to be discrete as we will see in Proposition \ref{prop:abelian}.
\end{itemize}
\end{remark}

Let $C$ be a (small) symmetric monoidal category. We can define a coloured operad $\mathcal{O}_C$ as follows. The colours are the objects of $C$. The set of $n$-ary operations is defined as
$$\mathcal{O}_C(c_1,...,c_n; c) := \Hom_C(c_1 \otimes ... \otimes c_n, c).$$
The $\Sigma_n$-action is induced by the symmetric structure on $C$ and the composition is given by composition in $C$. Note that the expression $c_1 \otimes ... \otimes c_n$ is strictly speaking not well-defined in a symmetric monoidal category. One can either make a choice of order in which to tensor (e.g. from left to right) or work with unbiased symmetric monoidal categories. These are symmetric monoidal categories which have not only two-fold, but also $n$-fold chosen tensor products. For a discussion of these issues see \cite[Chapter 3.3]{Lein04}.

We denote by $\Symm$ the category of symmetric monoidal categories together with lax monoidal functors. Recall that a lax monoidal functor $F: C \to D$ is a functor together with morphisms $F(c) \otimes F(c') \to F(c \otimes c')$ for each $c,c' \in C$ and $1 \to F(1)$ which have to satisfy certain coherence conditions but do not have to be isomorphisms. The construction described above gives a fully faithful functor
$$ \Symm \to \Oper. $$
By composing with the dendroidal nerve $N_d: \Oper \to \dSet$ for each symmetric monoidal category $C$ we obtain a dendroidal set which we denote by abuse of notation with $N_d(C)$.

In \cite{MW09} it is shown that a dendroidal set is strictly inner Kan if and only if it is of the form $N_d(P)$ for a coloured operad $P$. An analogous statement is true for strictly fully Kan dendroidal sets. Recall that a symmetric monoidal category is called a \emph{Picard groupoid} if its underlying category is a groupoid and its set of isomorphism classes is a group, i.e. there are `tensor inverses' for objects.
\begin{proposition}\label{prop:strict}
A dendroidal set $D$ is strictly fully Kan if and only if there is a Picard groupoid $C$ with $D \cong N_d(C)$.
\end{proposition}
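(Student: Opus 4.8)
The plan is to run both implications through the identification of strictly inner Kan dendroidal sets with dendroidal nerves of coloured operads recalled above, translating the remaining (outer) horn–filling conditions into structure on the operad.

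For the ``if'' implication, let $C$ be a Picard groupoid and set $D = N_d(C) = N_d(\mathcal{O}_C)$. By \cite{MW09} the dendroidal nerve of any coloured operad is strictly inner Kan, so the inner horns of $D$ have unique fillers and only outer horns remain. A leaf horn $\Lambda^a[C_n]$ is filled with root colour $c_1 \otimes \dots \otimes c_n$ and the identity; a root horn $\Lambda^a[C_n]$ is filled by solving for the one missing leaf colour using a tensor inverse of the tensor product of the colours already present---this is where the Picard hypothesis is used, and no uniqueness is required since $C_n$ has a single vertex. For an outer horn $\Lambda^a[T]$ with $T$ having at least two vertices I would represent a $T$-diagram in $\mathcal{O}_C$ as an edge–colouring together with one morphism of $C$ per vertex, and use that, as recalled above, a tree with at least two vertices admits a root horn only if the whole tree is concentrated over a single leaf of its root vertex, so that when $a$ is the root vertex the operation at $a$ is a morphism of $C$; in each case the faces contained in the horn already determine all edge–colours and all operations at vertices other than $a$, and the operation at $a$ is recovered uniquely by solving a single composition equation, which is possible because every morphism of $C$ is invertible. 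This amounts to a finite verification over the combinatorial shapes of an outer vertex.

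For the ``only if'' implication, assume $D$ is strictly fully Kan. Then it is strictly inner Kan, so $D \cong N_d(P)$ for a coloured operad $P$ by \cite{MW09}; write $P_1$ for the category of colours and unary operations, so that $i^*D \cong N(P_1)$. Under $i_!$ the simplicial horn inclusions correspond to the inner and outer horn inclusions of the linear trees (the outer horn over the root vertex being a root horn, which fully Kan dendroidal sets fill), so $N(P_1) = i^*D$ is a Kan complex and $P_1$ is a groupoid. Next I would use the tree $T$ obtained by grafting $C_n$ onto the unique leaf of $C_1$, so $T$ has an $n$-ary vertex $w$ and a unary vertex $v$ joined by an inner edge $e$; its three faces are two copies of $C_n$ (contract $e$, or chop $v$) and one copy of $C_1$ (chop $w$), and the unique-filler condition for the outer horn $\Lambda^w[T]$ unwinds to the statement that for every $n$-ary operation $\psi\colon (c_1,\dots,c_n) \to d$ of $P$ the map $P_1(d,z) \to P(c_1,\dots,c_n;z)$, $g \mapsto g\circ_e\psi$, is a bijection for all colours $z$. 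Combined with the existence of leaf–horn fillers for $C_n$, which supplies at least one such $\psi$, this says exactly that each multimorphism functor $P(c_1,\dots,c_n;-)$ is representable. By the standard correspondence between operads with representable multimorphism sets and (unbiased) symmetric monoidal categories \cite{Lein04}---with symmetry inherited from the $\Sigma_n$-action on $P$ and unit extracted from the nullary leaf horn---the groupoid $C := P_1$ acquires a symmetric monoidal structure with $P \cong \mathcal{O}_C$. Finally, a filler of the root horn $\Lambda^a[C_2]$ with root colour the unit object gives, for each object $c$, an object $c'$ and a morphism $c' \otimes c \to 1$, which is invertible because $C$ is a groupoid; hence the isomorphism classes of $C$ form a group, $C$ is a Picard groupoid, and $D \cong N_d(P) \cong N_d(\mathcal{O}_C) = N_d(C)$.

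The step I expect to be the main obstacle is the uniqueness of outer–horn fillers for trees with at least two vertices in the ``if'' direction---equivalently, extracting representability from a single tree in the ``only if'' direction. One must check, for each combinatorial type of outer vertex, that a horn genuinely pins down every edge–colour and every vertex–operation except the one being filled, and that the missing operation is then recovered by composing with the inverse of a morphism of $C$; setting up this bookkeeping cleanly---reducing root vertices to unary ones through the fact that a multi-vertex tree admitting a root horn is concentrated over a single leaf of its root vertex, and noting that a non-root outer vertex carries exactly one inner edge---is the technical core. The reconstruction of the symmetric monoidal structure and the verification of the Picard property are comparatively formal once representability and the groupoid property are available.
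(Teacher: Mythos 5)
Your overall strategy coincides with the paper's: both directions run through the identification of strictly inner Kan dendroidal sets with dendroidal nerves of coloured operads from \cite{MW09}, and your ``if'' direction (corolla horns filled by tensor products and inverses, higher horns reduced to the two-vertex trees $C_{n,k}$ and solved by a single composition equation in a groupoid) is essentially the paper's argument. The genuine gap is in the ``only if'' direction, at the passage from horn-filling to a symmetric monoidal structure. The tree you use ($C_n$ grafted onto $C_1$) only yields that for every $\psi\in P(c_1,\dots,c_n;d)$ the map $P_1(d,z)\to P(c_1,\dots,c_n;z)$, $g\mapsto g\circ\psi$, is a bijection, i.e.\ naive representability of each functor $P(c_1,\dots,c_n;-)$. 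But the criterion of \cite[Theorem 3.3.4]{Lein04}, which is what the paper uses, demands universality with respect to partial composition in \emph{arbitrary} contexts: for all $a_1,\dots,a_p,b_1,\dots,b_q,c'$ the map $P(a_1,\dots,a_p,c,b_1,\dots,b_q;c')\to P(a_1,\dots,a_p,c_1,\dots,c_n,b_1,\dots,b_q;c')$ must be a bijection. Naive representability of the multihom functors is not equivalent to the multicategory arising from a monoidal category (this is precisely the distinction between universal and strongly universal arrows in Hermida's representability theorem), so the ``standard correspondence'' you invoke is not true at the level of generality at which you state it. The paper closes exactly this point by uniquely filling the outer horn of the two-vertex tree whose root vertex has arity $p+1+q$ and whose top vertex is the $n$-corolla sitting over one interior input slot. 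Your argument is repairable: either use that tree, or note that you have in fact shown that \emph{every} arrow of $P$ is universal, so universal arrows are trivially closed under composition and strong universality follows; but that deduction must be made explicitly.

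Two smaller points. For the horn $\Lambda^v[C_{n,k}]$ at the top vertex $v$, the face $\partial_u$ lies in the horn and fixes the operation at $v$; it is the operation at the root vertex $u$ that must be solved for, so your bookkeeping (``all operations at vertices other than $a$ are determined and the operation at $a$ is recovered'') is inverted, though the conclusion stands. And for the root horn of $C_{n,k}$ with $k\geq 2$ one must cancel a tensor factor, i.e.\ use that $b_1\otimes\cdots\otimes b_{k-1}\otimes(-)$ is an equivalence of groupoids; invertibility of morphisms alone does not suffice (the discrete symmetric monoidal groupoid on a non-group commutative monoid fails here), so the Picard hypothesis is needed at this step as well, not only for the corolla root horns.
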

\begin{proof}
First assume that $D$ is strictly fully Kan. Then {$D$ is, in particular, a strictly inner Kan dendroidal set} and \cite[Theorem 6.1]{MW09} shows that there is a coloured operad $P$ with $N_d(P) \cong D$. Let $C$ be the underlying category of $P$. Since the underlying simplicial set of $N_d(P)$ is a Kan complex we conclude that $C$ is a groupoid.

By \cite[Theorem 3.3.4]{Lein04} an operad $P$ comes from a unique symmetric monoidal category as described above if and only if for every sequence $c_1,...,c_n$ of 
objects in $P$ there is {a} universal tensor product, that is an object $c$ together with an operation $t\in P(c_1,...,c_n;c)$ such that for all objects $a_1,...,a_p, b_1,...,b_q, c'$ and operations $t'\in P(a_1,...,a_p, c_1,...,c_n, b_1,...,b_q; c')$ there is a unique element $s\in P(a_1,...,a_p, c, b_1,...,b_q; c')$ such that the partial composition of $s$ and $t$ in $P$ is equal to $t'$. A sequence $c_1,...,c_n$ of objects of $P$ determines a map from $\eta_{c_1} \sqcup \cdots \sqcup \eta_{c_n}$ to $N_d(P)$. Since $N_d(P)$ is fully Kan we can fill the horn $\eta_{c_1} \sqcup \cdots \sqcup \eta_{c_n}  \to \Omega[C_n]$ and obtain a morphism $\Omega[C_n] \to N_d(P)$. The root colour of this morphism provides an object $c$ in $P$ and the corolla provides an operation $t \in P(c_1,...,c_n; c)$. Assume we have another operation $t' \in P(a_1,...,a_p, c_1,...,c_n, b_1,...,b_q; c')$. Then we consider the tree $T$ which is given by

$$  \xymatrix@R=10pt@C=12pt{
&&&&...&&&& \\
&&&*=0{~\bullet_v} \ar@{-}[ull]^{c_1} \ar@{-}[ul]_{c_2} \ar@{-}[urr]_{c_n} &&&&&\\
&...&_{a_p}&&_{b_1}&...&&&\\
&&&*=0{\bullet}\ar@{-}[ulll]^{a_1} \ar@{-}[ul] \ar@{-}[uu]_{c} \ar@{-}[ur] \ar@{-}[urrr]_{b_q}&&&&& \\
&&&*=0{}\ar@{-}[u]^{c'} &&&&&
}
$$
The operations $t$ and $t'$ provide a morphism $\Lambda^{v}[T] \to N_d{P}$, where $\Lambda^{v}[T]$ is the outer horn of $\Omega[T]$ at $v$. Since $D$ is strictly fully Kan we obtain a unique filler $\Omega[T] \to N_d(P)$, i.e. a unique $s\in P(a_1,...,a_p, c, b_1,...,b_q; c')$ with the sought condition. This shows that $c$ is the desired universal tensor product and that $P$ comes from a symmetric monoidal category.

The last thing to show is that $C$ is group-like. For $a$ and $c$ in $C$ we obtain an object $b$ together with a morphism $t \in P(a,b;c)$ by filling the {root} horn $\eta_a \sqcup \eta_c \to \Omega[C_2]$. But this is the same as a morphism  $a \otimes b \to c $ which is an isomorphism since $C$ is a groupoid. If we let $c$ be the tensor unit in $C$ then $b$ is the necessary inverse for $a$.  \\

Now assume conversely that $C$ is a Picard groupoid. Then the associated dendroidal set $N_d(C)$ admits lifts for corolla horns since tensor products and inverses exist (the proof is essentially the same as above). It remains to show that all higher horns admit unique fillers. To see this let $T$ be a tree with more than one vertex and $\Lambda^a[T]$ be any horn. A morphism $\Omega[T] \to N_d(C)$ is given by labeling the edges of $T$ with objects of $C$ and the vertices with operations in $C$ of higher arity, i.e. morphisms out of the tensor product of the  ingoing objects into the outgoing object of the vertex. The same applies for a morphism $\Lambda^a[T]\to N_d(C)$ where
the faces in the horn are labeled in the same manner {and} consistently.

The first observation is that for any labeling of the horn $\Lambda^a[T]$ already all edges of the tree $T$ are labeled, since the horn contains all colours of $T$ (for $T$ with more than one vertex). If the horn is inner then also all vertices of $T$ are already labeled if we label $\Lambda^a[T]$ and thus there is a unique filler.
If $a$ is an outer vertex and $T$ has more then two vertices then the same applies as one easily checks. Thus the horn can be uniquely filled.
Therefore we only have to deal with outer horns of trees with exactly two vertices. Such trees can all be obtained by grafting an $n$-corolla $C_n$ for $n \geq 0$ on top of a $k$-corolla for $k \geq 1$. We call this tree $C_{n,k}$.
$$  \xymatrix@R=10pt@C=12pt{
&&&&&&&\\
&&&&_{a_1}&_{a_2 \quad\cdots}&_{a_n}& \\
C_{n,k}= \qquad && &&_{b_{k-1}}& *=0{\bullet} \ar@{-}[ul]\ar@{-}[u]\ar@{-}[ur]\ar@{-}&& \\
&&&*=0{\bullet}\ar@{-}[ull]^{b_1}\ar@{-}[ul]_{b_2 ~ \cdots}\ar@{-}[ur]\ar@{-}[urr]_{b_k}&&&&\\
&&&*=0{}\ar@{-}[u]^c &&&&
}
$$
 A morphism from the non-root horn $\Lambda^{v}[C_{n,k}] \to N_d(C)$ is then given by a pair consisting of a morphism $f: a_1 \otimes ... \otimes a_n \longrightarrow b_k$ and a morphism $g: b_1 \otimes ...  \otimes b_{k-1} \otimes a_1 \otimes ... \otimes a_n \longrightarrow c$ in $C$. Now we find a unique morphism $ g \circ (id \otimes f^{-1}): b_1 \otimes ... \otimes b_k \to c$ which renders the relevant diagram commutative, i.e. provides a filler $\Omega[C_{n,k}] \to N_d(C)$. A similar argument works for the case of the root horn of $C_{n,k}$. {This} finishes the proof.
\end{proof}

\begin{corollary}\label{corollaryPic}
The functor $N_d: \Symm \to \dSet$ induces an equivalence between the full subcategory of
Picard groupoids on the left and the full subcategory of
strictly fully Kan dendroidal sets on the right.
\end{corollary}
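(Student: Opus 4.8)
The plan is to derive the corollary formally from Proposition \ref{prop:strict} together with the fact, already recorded above, that the composite $\Symm \to \Oper \xrightarrow{N_d} \dSet$ is fully faithful. Recall that a functor which is fully faithful and essentially surjective onto a full subcategory is an equivalence onto that subcategory. So it suffices to check three things: (i) that $N_d$ restricted to the full subcategory of Picard groupoids remains fully faithful; (ii) that this restriction actually lands in the full subcategory of strictly fully Kan dendroidal sets; and (iii) that every strictly fully Kan dendroidal set lies in the essential image.

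For (i), fully faithfulness is inherited by any full subcategory of the source, so nothing is needed beyond the fact that $N_d\colon \Symm \to \dSet$ is fully faithful, which in turn holds because $\Symm \to \Oper$ is fully faithful (as remarked when this functor was introduced) and $N_d\colon \Oper \to \dSet$ is fully faithful (the dendroidal nerve). One small point I would spell out is that the full subcategory of $\Symm$ spanned by Picard groupoids really is ``the category of Picard groupoids'': since every morphism in a Picard groupoid is invertible, the structure maps $F(c)\otimes F(c') \to F(c\otimes c')$ and $1 \to F(1)$ of a lax monoidal functor between Picard groupoids are automatically isomorphisms, so lax and strong monoidal functors coincide in this situation and the hom-sets are the expected ones.

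For (ii) and (iii), these are precisely the two implications of Proposition \ref{prop:strict}: the ``if'' direction gives that $N_d(C)$ is strictly fully Kan whenever $C$ is a Picard groupoid, hence (ii); and the ``only if'' direction gives that every strictly fully Kan $D$ is isomorphic to $N_d(C)$ for some Picard groupoid $C$, hence (iii). Combining (i)--(iii), $N_d$ restricts to a fully faithful and essentially surjective functor from Picard groupoids to strictly fully Kan dendroidal sets, which is therefore an equivalence.

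I do not expect a genuine obstacle here: once Proposition \ref{prop:strict} is available, the corollary is a soft formal consequence. The only points I would be careful about are the bookkeeping of the word ``full'' on both sides --- i.e.\ making sure that hom-sets computed in the full subcategory of $\dSet$ on strictly fully Kan objects agree with those in $\dSet$, which is automatic --- and the lax-versus-strong monoidal functor remark above, which guarantees that the equivalence is stated for the intended category of Picard groupoids.
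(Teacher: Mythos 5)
Your argument is correct and is essentially the paper's own proof: full faithfulness of $N_d$ follows from full faithfulness of $\Symm \to \Oper$ and $\Oper \to \dSet$, and essential surjectivity onto strictly fully Kan objects is exactly Proposition \ref{prop:strict}. The extra remarks on lax versus strong monoidal functors and on hom-sets in full subcategories are harmless elaborations that the paper leaves implicit.
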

\begin{proof}
The functor $N_d$ is fully faithful since both functors $\Symm \to \Oper$ and $\Oper \to \dSet$ are. The restriction is essentially surjective by the last proposition.
\end{proof}
One of the main results of this paper shows that {a similar statement is valid} for fully Kan dendroidal sets that are not strict. They form a model for Picard $\infty$-groupoids, as we will show in the next sections. \\

Finally we want to give a characterization of strictly fully Kan {dendroidal sets for which} the corolla horns {also} admit unique fillers. Let $A$ be an abelian group, then we can associate to $A$ a symmetric monoidal category $A_{\text{dis}}$ which has $A$ as objects and only identity morphisms. The tensor product is given by the group multiplication of $A$ and is symmetric since $A$ is abelian. This construction provides a fully faithful functor from the category $\Ab$ of abelian groups to the category $\Symm$. Composing with the functor $\Symm \to \dSet$ constructed above we obtain a fully faithful functor
$$ i: \Ab \to \dSet.$$
Now we can characterize the essential image of $i$.
\begin{proposition}\label{prop:abelian}
For a dendroidal set $D$ the following two statements are equivalent
\begin{itemize}
\item $D$ is fully Kan with all fillers unique.
\item $D \cong i(A)$ for an abelian group $A$.
\end{itemize}
\end{proposition}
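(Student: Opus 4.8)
The plan is to reduce to Proposition \ref{prop:strict}. A dendroidal set that is fully Kan with all fillers unique is in particular strictly fully Kan, so Proposition \ref{prop:strict} produces a Picard groupoid $C$ with $D \cong N_d(C)$; since $N_d$ is fully faithful, the one remaining hypothesis --- uniqueness of fillers for the horns of the one-vertex trees $C_n$ --- translates into a property of $C$. The real content of the forward implication is therefore: if $N_d(C)$ has \emph{unique} fillers for all corolla horns, then $C$ is of the form $A_{\text{dis}}$ for an abelian group $A$.

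I would extract this from the leaf horn of the $1$-corolla $C_1$. A map $\eta_a \to N_d(C)$ picks out an object $a$, and a filler $\Omega[C_1] \to N_d(C)$ is precisely a pair $(c,f)$ with $f \colon a \to c$ a morphism of $C$. Since $(a, \mathrm{id}_a)$ is always such a filler, uniqueness forces every pair $(c,f)$ with $f \colon a \to c$ to equal $(a, \mathrm{id}_a)$; as $a$ ranges over all objects this says the underlying category of $C$ has only identity morphisms. But then the associativity, unit and symmetry isomorphisms of the monoidal structure are themselves identities, so $\mathrm{ob}(C)$ with the tensor product is a commutative monoid, and being Picard it has inverses, hence is an abelian group $A$ with $C \cong A_{\text{dis}}$. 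Composing the isomorphisms yields $D \cong N_d(A_{\text{dis}}) = i(A)$.

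For the converse, $A_{\text{dis}}$ is a Picard groupoid, so $i(A) = N_d(A_{\text{dis}})$ is strictly fully Kan by Proposition \ref{prop:strict}, and one only has to check that the corolla horns also have unique fillers. Because $A_{\text{dis}}$ has only identity morphisms, a filler along $C_n$ of a leaf horn $\eta_{a_1} \sqcup \cdots \sqcup \eta_{a_n} \to N_d(A_{\text{dis}})$ is forced to label the root by the product $a_1 \cdots a_n$ and the vertex by the identity; a filler of a root horn of $C_n$ for $n \geq 1$ is likewise forced, the object on the omitted leaf being determined by the root object and the remaining leaf objects via the group structure; and the nullary case $C_0$ is handled identically. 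So the fillers exist and are unique. The argument is almost entirely bookkeeping: the only conceptual point --- and the one to get right --- is the observation in the forward direction that demanding \emph{unique} corolla fillers rigidifies away every non-identity morphism of $C$, while the only mild annoyance is checking the converse case by case over the different kinds of corolla horns.
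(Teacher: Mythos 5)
Your proof is correct and takes essentially the same route as the paper: both reduce to Proposition \ref{prop:strict} and then use uniqueness of fillers for the leaf horn of the $1$-corolla (the paper phrases this as unique fillers for $\Lambda^0[1]\to\Delta[1]$ in the underlying Kan complex $i^*D = NC$, which is the same horn under $i_!$) to conclude that $C$ has no non-identity morphisms and is therefore $A_{\mathrm{dis}}$ for an abelian group $A$. The only difference is that you spell out the converse, which the paper leaves to the reader.
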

\begin{proof}
We already know by Proposition \ref{prop:strict} that strictly fully Kan dendroidal sets are of the form $N_d(C)$ for $C$ a Picard groupoid. We consider the underlying space $i^*D = NC$. This is now a strict Kan complex in the sense that all horn fillers are unique. In particular fillers for the horn $\Lambda^0[1] \to \Delta[1]$ are unique which shows that there are no non-degenerated 1-simplices in $NC$, hence no non-identity morphisms in $C$. Thus $C$ is a discrete category. But a discrete category which is a Picard groupoid is clearly of the form $A_{dis}$ for an abelian group $A$. This shows one direction of the claim. 
The other is {easier} and left to the reader.
\end{proof}
\section{The stable model structure}

{So far we have mentioned two model structures on dendroidal sets.}  In this section we want to describe another model structure on the category of dendroidal sets which we call the \emph{stable} model structure. 
We construct it as a left Bousfield localization {of the covariant model structure}. {Note that the covariant model structure is combinatorial and hence admits a left Bousfield localiztion with respect to any set of maps.} 
We will further explore {the stable} model structure to give a simple characterization of fibrant objects and weak equivalences.

The idea is to localize at a {root} horn of the 2-corolla
$$
\xymatrix@R=10pt@C=12pt{
&&&&\\
C_2=&&*=0{\bullet}\ar@{-}[ul]^{a}\ar@{-}[ur]_{b}&&\\
&&*=0{}\ar@{-}[u]^c.&&
}
$$
The relevant horn is given by the inclusion of the colours $a$ and $c$, i.e.
by the map
\begin{equation}\label{map:s}
s: {\Lambda^b [C_2]}  = \eta_a \sqcup \eta_c \longrightarrow \Omega[C_2].
\end{equation}
Note that there is also the inclusion of the colours $b$ and $c$ but this is essentially the same map since we deal with symmetric operads.

\begin{definition}
The \textit{stable} model structure on dendroidal sets is the left Bousfield localization of the covariant model structure at the map $s$. 
{Hence the} stable cofibrations are normal {monomorphisms between} dendroidal sets {and the stably} fibrant objects are those dendroidal Kan complexes $D$ for which the map
\begin{equation*}
s^*: \sHom(\Omega[C_2] , D) \to \sHom(\eta_a \sqcup \eta_c,D)
\end{equation*}
is a weak equivalence of simplicial sets.
\end{definition}

The general theory of left Bousfield localization yields the following:

\begin{thm}\phantomsection\label{simplicial}
\begin{enumerate}
\item
The category of dendroidal sets together with the stable model structure is a left proper, combinatorial, simplicial model category.

\item
The adjoint pair
$$ i_!: \xymatrix{\sSet \ar@<0.3ex>[r] & \dSet: i^* \ar@<0.7ex>[l]}$$
is a Quillen adjunction (for the stable model structure on dendroidal sets and the Kan-Quillen model structure on simplicial sets).
\item
The functor $i^*$ is homotopy right conservative, that is a morphism $f: D \to D'$ between stably fibrant dendroidal sets $D$ and $D'$ is a stable equivalence if and only if the underlying map
$i^*f: i^*D \to i^*D'$ is a homotopy equivalence of Kan complexes.
\end{enumerate}
\end{thm}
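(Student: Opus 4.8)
The plan is to deduce all three assertions from general facts about left Bousfield localizations together with the corresponding statements for the covariant model structure, which by Heuts' theorem is already known to be simplicial, left proper and combinatorial. For part (1): a left Bousfield localization of a left proper combinatorial (resp.\ simplicial) model category at any set of maps is again left proper, combinatorial (resp.\ simplicial); since the covariant model structure has all these properties and we localize at the single map $s$, the stable model structure inherits them. I would just cite the standard reference (Hirschhorn, or Lurie HTT A.3.7) for the simplicial case and Barwick for the combinatorial/tractable case, noting that the cofibrations — the normal monomorphisms — are unchanged by localization.

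For part (2): since localization does not change cofibrations, the map $i_!$ still sends cofibrations of simplicial sets to normal monomorphisms of dendroidal sets. So it suffices to check that $i_!$ sends the generating trivial cofibrations of the Kan--Quillen model structure (the horn inclusions $\Lambda^k[n]\hookrightarrow\Delta[n]$) to stable trivial cofibrations, or equivalently — using that both model structures are left proper and the left adjoint preserves cofibrations — that the right adjoint $i^*$ preserves fibrations between fibrant objects and fibrant objects, or most simply that $i_!$ takes Kan--Quillen weak equivalences between cofibrant objects (i.e.\ all simplicial sets) to stable weak equivalences. The cleanest route is: $i_!$ is already a left Quillen functor for the covariant model structure (this should either be in Heuts or follow from the fact that the inclusion $\Delta\hookrightarrow\Omega$ is compatible with the relevant structure), and the Kan--Quillen model structure is the left Bousfield localization of the Joyal model structure at $\Lambda^0[1]\hookrightarrow\Delta[1]$; hence it is enough to observe that $i_!$ sends this map to (a retract of, or a map built from) the map $s$ together with maps that are already covariant equivalences. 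Concretely, $i_!(\Lambda^0[1]) = \eta_a\sqcup\eta_c$ with $i_!(\Delta[1]) = \Omega[L_2]$ the linear $2$-tree, and one compares $\Omega[L_2]$ with $\Omega[C_2]$; I would argue the relevant map is a stable equivalence because $s$ is inverted. Then a left Quillen functor into a model category, which inverts the maps being localized at, descends to a left Quillen functor on the localization — this is the universal property of left Bousfield localization.

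For part (3): this is where the real work is, though it is still formal given what is available. By definition of left Bousfield localization, the stably fibrant objects are the dendroidal Kan complexes that are $s$-local, and a map between stably fibrant objects is a stable equivalence iff it is a covariant equivalence. So I must show: a covariant equivalence between dendroidal Kan complexes that are $s$-local is detected on underlying simplicial sets by $i^*$. I would factor this through the statement that $i^*$ sends stably fibrant dendroidal sets to Kan complexes (immediate from part (2), since $i^*$ is right Quillen) and that $i^*$ sends stable equivalences between stably fibrant objects to weak homotopy equivalences (again because $i^*$ is right Quillen, hence preserves weak equivalences between fibrant objects — Ken Brown's lemma). The nontrivial direction is the converse: if $i^*f$ is a homotopy equivalence of Kan complexes, then $f$ is a stable equivalence. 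This is where I expect the main obstacle, and I would handle it by an induction on trees: using that $D$ and $D'$ are $s$-local (hence the corolla spaces $\sHom(\Omega[C_n],D)$ are, up to equivalence, determined by the edge spaces $\sHom(\eta,D) = i^*D$ via iterated application of $s$ — i.e.\ $\sHom(\Omega[C_2],D)\simeq i^*D\times i^*D$, and similarly for higher corollas by grafting) and that a map of dendroidal Kan complexes is a covariant equivalence iff it induces equivalences on all $\sHom(\Omega[T],-)$ for $T$ a tree, or even just on all corolla spaces $\sHom(C_n,-)$ (this reduction should be available from Heuts' analysis of the covariant model structure). Then $i^*f$ being an equivalence forces $f$ to induce an equivalence on each corolla space, hence $f$ is a covariant, thus stable, equivalence. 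The key input to locate or prove is precisely this description of the corolla spaces of an $s$-local object in terms of its underlying Kan complex; everything else is bookkeeping with left Bousfield localization and Quillen adjunctions.
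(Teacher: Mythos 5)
Your overall strategy is the same as the paper's: all three parts are deduced from the general formalism of left Bousfield localization together with the corresponding known facts for the covariant model structure. Part (1) is identical. For part (2) the paper's argument is shorter than yours: the adjunction $i_!\dashv i^*$ is already a Quillen adjunction between the Kan--Quillen model structure and the \emph{covariant} model structure (this is part of Heuts' setup), and since the stable structure is a left Bousfield localization of the covariant one, the identity is left Quillen from covariant to stable; composing gives the claim. Your ``cleanest route'' sentence contains this argument, but the detour through the Joyal model structure is unnecessary and, more importantly, your concrete identification there is wrong: $\Lambda^0[1]$ is a single vertex, so $i_!(\Lambda^0[1])=\eta$ (connected!), not $\eta_a\sqcup\eta_c$, and $i_!(\Delta[1])=\Omega[L_1]$, a linear tree. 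The map $s\colon\eta_a\sqcup\eta_c\to\Omega[C_2]$ has disconnected domain and non-linear codomain, so it is not in the image of $i_!$ and cannot be compared with $i_!(\Lambda^0[1]\hookrightarrow\Delta[1])$ in the way you suggest. (Your conclusion survives for a different reason: $\eta\to\Omega[L_1]$ is a non-root horn inclusion, hence left anodyne, hence already a covariant trivial cofibration.)

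For part (3) the paper's proof is again purely a citation: a map between stably fibrant objects is a stable equivalence iff it is a covariant equivalence, and covariant equivalences between covariantly fibrant objects are detected by $i^*$ by Heuts' Proposition 2.2. You correctly reduce to the covariant statement but then set out to re-prove Heuts' result rather than quote it, and your sketch has a genuine gap: the claim that a covariant equivalence between fibrant objects is detected on all corolla mapping spaces $\sHom(\Omega[C_n],-)$ is exactly the nontrivial input, and you only assert that it ``should be available.'' (It can be proved via Segal cores: the inclusion of the union of vertex corollas into $\Omega[T]$ is inner anodyne, so tree mapping spaces are homotopy limits of corolla and edge spaces.) Note also that $s$-locality is not the relevant point for identifying the corolla spaces: for \emph{any} covariantly fibrant $D$ the leaf inclusion $\eta^{\sqcup n}\to\Omega[C_n]$ is left anodyne, so $\sHom(\Omega[C_n],D)\simeq (i^*D)^n$ already holds before stabilizing; this is why the detection statement is a fact about the covariant structure and not about $s$-local objects.
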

\begin{proof}
The first part follows from the general theory of Bousfield localizations {(see e.g. \cite[A.3]{HTT})}. 
{For the second statement,} note that the corresponding fact for the covariant model structure is true. 
Since the stable model structure is a left Bousfield localization of the covariant model structure, the claim 
follows by composition with the identity functor. The last assertion is true since a morphism between stably fibrant objects is a 
stable equivalence if and only if it is a covariant equivalence {and} covariant equivalences between fibrant objects
can be tested on the underlying spaces {(see \cite[Proposition 2.2.]{Heuts2})}. 
\end{proof} 


\begin{corollary}\label{criterion}
Let $f: X \to Y$ be a map of dendroidal sets. Then $f$ is a stable equivalence exactly if $i^*(f_K)$ is a weak equivalence where $f_K: X_K \to Y_K$ is the corresponding map between fully Kan (fibrant) replacements of $X$ and $Y$.
\end{corollary}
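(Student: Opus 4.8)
The plan is to deduce Corollary \ref{criterion} directly from Theorem \ref{simplicial}(3) together with the general machinery of left Bousfield localizations. First I would recall that in any simplicial combinatorial model category obtained as a left Bousfield localization, every object admits a fibrant replacement via a functorial factorization; applied to $X$ and $Y$ we obtain stably fibrant dendroidal sets $X_K$ and $Y_K$ together with stable trivial cofibrations $X \to X_K$ and $Y \to Y_K$, and by construction a stably fibrant dendroidal set is precisely a fully Kan dendroidal set (this is the content of Theorem \ref{THMFIBRANT}, which we may invoke, though for the corollary as stated it suffices to know that stably fibrant objects exist and that $X \to X_K$, $Y \to Y_K$ are stable equivalences). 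The map $f: X \to Y$ lifts, up to homotopy, to a map $f_K: X_K \to Y_K$ making the evident square commute up to homotopy — concretely, one factors the composite $X \to Y \to Y_K$ through the stable trivial cofibration $X \to X_K$ using that $Y_K$ is fibrant.

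Next I would invoke the two-out-of-three property for stable equivalences: since $X \to X_K$ and $Y \to Y_K$ are stable equivalences, $f$ is a stable equivalence if and only if $f_K$ is a stable equivalence. So it remains to characterize when $f_K$ is a stable equivalence, and here Theorem \ref{simplicial}(3) applies verbatim: $f_K$ is a map between stably fibrant objects, hence it is a stable equivalence if and only if $i^* f_K: i^* X_K \to i^* Y_K$ is a homotopy equivalence of Kan complexes. Note $i^* X_K$ and $i^* Y_K$ are Kan complexes because $i^*$ is a right Quillen functor to the Kan–Quillen model structure by Theorem \ref{simplicial}(2) and $X_K$, $Y_K$ are stably fibrant; a homotopy equivalence of Kan complexes is the same thing as a weak equivalence of simplicial sets, which matches the phrasing in the statement. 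Chaining these equivalences gives exactly the claim.

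The only genuine subtlety — and the step I would spell out most carefully — is the well-definedness and functoriality of $f_K$: the assignment $f \mapsto f_K$ is only well-defined up to homotopy unless one fixes a functorial fibrant replacement, so I would either appeal to the functorial factorizations supplied by the combinatoriality of the stable model structure (so that $f_K$ is literally a map of dendroidal sets and the square commutes on the nose), or else phrase the statement so that $f_K$ denotes any choice of map between fibrant replacements compatible with $f$ and observe that the property "$i^* f_K$ is a weak equivalence" is invariant under such choices. Everything else is a formal diagram chase with two-out-of-three. I do not expect any real obstacle; the corollary is essentially a restatement of Theorem \ref{simplicial}(3) for not-necessarily-fibrant source and target, and the proof is two or three lines once the fibrant replacement bookkeeping is in place.
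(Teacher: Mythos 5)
Your proposal is correct and follows exactly the route the paper intends: the corollary is stated without proof as an immediate consequence of Theorem \ref{simplicial}(3), obtained by replacing $X$ and $Y$ fibrantly and applying two-out-of-three, with Theorem \ref{THMFIBRANT} identifying the stably fibrant objects as the fully Kan ones. Your remarks on the well-definedness of $f_K$ up to homotopy are sensible bookkeeping but do not change the argument.
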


\begin{remark}
We could as well have localized at bigger collections of maps:
\begin{itemize}\setlength{\itemsep}{-0.5ex}
\item all corolla root horns
\item all outer horns
\end{itemize}
These localizations would yield the same model structure as we will see below. We decided to use only the 2-corolla in order to keep the localization (and the proofs) as simple as possible.
\end{remark}

As a next step we want to identify the fibrant objects in the stable model structure as the fully Kan dendroidal sets.  {First} we need some terminology:
\begin{definition}\label{ext_corolla}
{An} \emph{extended corolla} is {a} tree of the form
$$  \xymatrix@R=10pt@C=12pt{
&&&&&&&\\
& *=0{\bullet} \ar@{-}[u]^{a_0} &&&&&& \\
& *=0{\bullet} \ar@{-}[u]^{a_1} &&&&&& \\
EC_{n,k} = \qquad & {\dots} \ar@{-}[u] &&&&&& \\
& *=0{\bullet} \ar@{-}[u]^{a_{n-1}} &&\qquad \dots &&&& \\
&&&*=0{\bullet}\ar@{-}[ul]_{b_1}\ar@{-}[ull]^{a_n}\ar@{-}[urrr]^{b_k}&&&&\\
&&&*=0{}\ar@{-}[u]^c &&&&
}
$$
In particular we have $EC_{0,k} = C_{k+1}$. The trees $EC_{n,1}$ are called \emph{binary extended corollas}. The root horn of the extended corolla is the union of all faces except the
face obtained by chopping off the root {vertex}.
\end{definition}

\begin{thm} \label{THMFIBRANT}
For a dendroidal set $D$ {the following statement} are equivalent. 
\begin{enumerate}\itemsep 0pt
\item\label{eins} $D$ is fibrant in the stable model structure.
\item\label{zwei} $D$ is dendroidal Kan and admits fillers for all root horns of extended corollas $EC_{n,1}$.
\item\label{drei} $D$ is dendroidal Kan and admits fillers for all root horns of extended corollas $EC_{n,k}$.
\item\label{vier} $D$ is fully Kan.
\end{enumerate}
\end{thm}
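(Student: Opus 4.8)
The plan is to prove the cycle of implications $\ref{vier} \Rightarrow \ref{drei} \Rightarrow \ref{zwei} \Rightarrow \ref{eins} \Rightarrow \ref{vier}$, since $\ref{vier} \Rightarrow \ref{drei} \Rightarrow \ref{zwei}$ are trivial (fewer horns to fill) and the substance lies in the two implications $\ref{zwei} \Rightarrow \ref{eins}$ and $\ref{eins} \Rightarrow \ref{vier}$.

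For $\ref{eins} \Rightarrow \ref{vier}$, suppose $D$ is stably fibrant. Then $D$ is a dendroidal Kan complex (being fibrant in a localization of the covariant model structure), so it already has fillers for all non-root horns; it remains to produce fillers for root horns. The key input is that the localizing map $s\colon \eta_a \sqcup \eta_c \to \Omega[C_2]$ becomes, after localization, a trivial cofibration, so $s^*\colon \sHom(\Omega[C_2],D) \to \sHom(\eta_a\sqcup\eta_c,D)$ is a weak equivalence of Kan complexes; combined with the fact that $s$ is a cofibration between cofibrant objects, a right-lifting/homotopy argument shows $D$ has fillers for the $C_2$ root horn $s$ itself. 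For a general root horn $\Lambda^a[T]\to D$, where (as noted after the root-horn definition) $T$ must be a corolla $C_n$ or concentrated over a single leaf: I would reduce the $C_n$ root horn to the $C_2$ case by an inductive grafting argument, using inner horn fillers (available since $D$ is inner Kan) to combine a chain of binary operations, and handle the ``concentrated over a single leaf'' trees similarly by grafting. This reduction should be purely combinatorial once one knows the $C_2$ root horn can be filled, together with closure of the fully Kan condition under the relevant pushout-products; but one must be careful that filling $s$ up to homotopy (as the localization a priori gives) can be upgraded to an honest filler, which follows because $\eta_a\sqcup\eta_c \hookrightarrow \Omega[C_2]$ is a normal mono and $D$ is fibrant, so any map on the subobject extends strictly.

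For $\ref{zwei} \Rightarrow \ref{eins}$ I would show that if $D$ is dendroidal Kan and has fillers for all binary extended corolla root horns $EC_{n,1}$, then $s^*\colon \sHom(\Omega[C_2],D)\to\sHom(\eta_a\sqcup\eta_c,D)$ is a weak equivalence. The simplicial mapping space here is computed via the Boardman--Vogt tensor product: an $m$-simplex of $\sHom(K,D)$ is a map $K\otimes\Delta[m]\to D$. The trees $\Omega[C_2]\otimes\Delta[m]$ and $(\eta_a\sqcup\eta_c)\otimes\Delta[m]$ decompose, via the known description of the BV tensor product of a corolla with a simplex, into pieces built precisely out of extended corollas $EC_{n,1}$ and their faces. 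Thus filling the root horns of all $EC_{n,1}$ (together with the dendroidal Kan fillers, which take care of the inner and non-root outer horns appearing) is exactly enough to show that $s\otimes\Delta[m]$ (and more generally $s\,\hat\otimes\,\partial\Delta[m]\hookrightarrow s\,\hat\otimes\,\Delta[m]$, i.e.\ the relevant pushout-product) has the left lifting property against $D$. Running this over all $m$ shows $s^*$ is a trivial Kan fibration, in particular a weak equivalence, so $D$ is stably fibrant.

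The main obstacle is the combinatorial bookkeeping in $\ref{zwei}\Rightarrow\ref{eins}$: one needs an explicit enough handle on the cellular decomposition of $\Omega[C_2]\otimes\Delta[m]$ relative to $(\eta_a\sqcup\eta_c)\otimes\Delta[m]$ to see that exactly the root horns of the binary extended corollas (and no ``new'' root horns of other trees) arise as the cells one must fill, with all other attaching maps being inner or non-root outer horns handled by the dendroidal Kan hypothesis --- this is presumably where the bulk of sections \ref{partI}--\ref{partIII} goes. A secondary obstacle is the inductive reduction in $\ref{eins}\Rightarrow\ref{vier}$ from general root horns to the $C_2$ root horn: one must check that grafting-and-filling genuinely yields a filler of the original horn and not merely of some larger tree, which requires tracking faces carefully, but this is routine compared to the tensor-product analysis. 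Given the difficulty I would actually prove a slightly stronger intermediate statement --- that fillers for $EC_{n,k}$ root horns suffice and are in turn implied by fillers for $EC_{n,1}$ root horns plus inner horns --- so that the chain $\ref{drei}\Leftrightarrow\ref{zwei}$ is established on the way, keeping the genuinely new content isolated in the two hard implications above.
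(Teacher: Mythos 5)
Your direction \ref{zwei} $\Rightarrow$ \ref{eins} is essentially the paper's: Lemma \ref{partIlemma2} shows that the pushout-product of $s$ with $\partial\Omega[L_n]\to\Omega[L_n]$ is binary extended left anodyne, i.e.\ built from non-root horns and root horns of the $EC_{n,1}$, so a dendroidal Kan complex with those fillers is $s$-local; you have located the main combinatorial work correctly there.

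The gap is in \ref{eins} $\Rightarrow$ \ref{vier}. From stable fibrancy you extract only fillers for the $C_2$ root horn (surjectivity of $s^*$ on vertices) and then propose to reach all remaining root horns by ``inductive grafting, using inner horn fillers.'' That reduction cannot be run from the $C_2$ root horn alone. The outer anodynes are generated over the non-root horns by the root horns of \emph{all} extended corollas (Propositions \ref{PROP2} and \ref{PROP3}), and the root horn of $EC_{n,1}$ for $n\geq 1$ is not a formal consequence of the $C_2$ root horn plus inner and leaf fillers: filling $\Lambda^u[EC_{n,1}]$ is a cancellation statement (two binary operations into $c$ with the same second input $b$ must differ by a morphism of their first inputs, coherently), and this uses the locality of $D$ in all simplicial degrees, not just on $\pi_0$. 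That is exactly why the paper's Lemma \ref{partIlemma1} derives the $EC_{n,1}$ fillers by lifting against the pushout-products $\big(\Lambda^b[C_2]\otimes\Omega[L_n]\big)\cup\big(\Omega[C_2]\otimes\partial\Omega[L_n]\big)\to\Omega[C_2]\otimes\Omega[L_n]$ for every $n$, followed by a pushout-and-retraction construction through $\Lambda^u[EC_{n,1}]\otimes\Omega[L_1]$ that uses the simplicial enrichment of the covariant model structure; your $C_2$ filler is only the $n=0$ instance of this. Moreover, even granting the $EC_{n,1}$ fillers, the passage to arbitrary root horns is the content of Sections \ref{partII}--\ref{partIII} (a retract argument via Lemma \ref{partIIlemma1}, the codimension argument, and the double induction of Lemma \ref{partIIIlemma2}); it is comparable in difficulty to the tensor-product analysis rather than ``routine.'' So the missing ingredient in your plan is an argument producing the root horn fillers for the higher binary extended corollas from stable fibrancy; that is where the real content of \ref{eins} $\Rightarrow$ \ref{vier} lies.
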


We will prove Theorem \ref{THMFIBRANT} at the end of the paper. More precisely the equivalence of \eqref{eins} and \eqref{zwei} is Proposition \ref{PROP1}. The equivalence of \eqref{zwei} and \eqref{drei} is Proposition \ref{PROP2} and the equivalence of \eqref{drei} and \eqref{vier} is in Proposition \ref{PROP3}.



\section{Equivalence to connective spectra}\label{sec:equivalence}

Let $\E_\infty \in \dSet$ be a cofibrant resolution of the terminal object in $\dSet$. We furthermore assume that $\E_\infty$ has the property that 
the underlying space $i^* \E_\infty$ is equal to the terminal object $\Delta[0] \in \sSet$. 
The existence of such an object can be easily seen, e.g. using the small object argument (note that the cofibrant objects are the same 
in all three model structures on $\dSet$ that we consider).
In the following we denote $E_\infty := hc\tau_d(\E_\infty)$ which is an operad enriched over simplicial sets. Here 
\begin{equation*}
hc\tau_d: \dSet \to \text{s}\Oper
\end{equation*}
is the left adjoint to the homotopy coherent nerve functor, see \cite{MC11}. This is the functor that implements the aforementioned Quillen equivalence between dendroidal sets 
(with the Cisinski-Moerdijk model structure) and topological operads.
The operad $E_\infty$ is then cofibrant, has one colour and the property that each space of operations is contractible. Thus it is indeed an $E_\infty$-operad in the classical terminology. Therefore for each $E_\infty$-algebra $X$ in $\sSet$, the set $\pi_0(X)$ inherits the structure of an abelian monoid. Such an algebra $X$ is called \emph{group-like} if $\pi_0(X)$ is an abelian group, i.e. there exist inverses for each element.

Now denote by $\EsSet$ the category of $E_\infty$-algebras in simplicial sets. Recall from \cite[Section 3]{Heuts2} that there is an adjoint pair
$ \xymatrix{ St: \dSet_{/ \E_\infty} \ar@<0.3ex>[r] & \EsSet: Un \ar@<0.7ex>[l]}$
where $\dSet_{/ \E_\infty}$ denotes the category of dendroidal sets over $\E_\infty$. We do not repeat the definition of $St$ here since we need the formula only for a few particular simple cases and for these
cases we give the result explicitly.
\begin{example}
\begin{itemize}
\item
The $E_\infty$-algebra $St(\eta \to \mathcal{E}_\infty)$ is the free $E_\infty$-algebra on one generator, which we denote by $Fr(a)$ where $a$ is the generator.
\item
An object in  $\dSet_{/ \E_\infty}$ of the form $p: \Omega[C_2] \to \mathcal{E_\infty}$ encodes a binary operation $-\cdot_p-$ in the operad $E_\infty$.
Then $St(p)$ is the $E_\infty$-algebra freely generated by two generators $a$,$b$ and the square $\Delta[1] \times \Delta[1]$ subject to the relation 
that $a \cdot_p b \sim (1,1) \in \Delta[1]\times\Delta[1]$. We write this as
\begin{equation*}
St(\Omega[C_2] \to \mathcal{E_\infty}) = \frac{Fr(a,b,\Delta[1]^2)}{a\cdot_p b \sim (1,1)}.
\end{equation*}
\item
The three inclusions $\eta \to \Omega[C_2]$ induce maps $St(\eta \to \E_\infty) \to St(p)$. As usual we let $a,b$ be the leaves of the tree $C_2$ and $c$ the root. The first two maps are simply given by 
\begin{equation*}
Fr(a) \to Fr(a,b,\Delta[1]^2)/_\sim \quad a \mapsto a \qquad \text{and} \qquad Fr(b) \to Fr(a,b,\Delta[1]^2)/_\sim \quad b \mapsto b
\end{equation*}
The third map $Fr(c) \to Fr(a,b,\Delta[1]^2 ) /_\sim $ is given by sending $c$ to $(0,0) \in \Delta[1]^2$. Note that this third map is obviously homotopic to the map sending $c$ to $(1,1) = a \cdot_p b$.
\end{itemize}
\end{example}

The functor $P(D) := D \times \E_\infty$ induces a further adjoint pair
$ \xymatrix{P : \dSet \ar@<0.3ex>[r] & \dSet_{/ \E_\infty}: \Gamma \ar@<0.7ex>[l]}$.
Composing the two pairs $(St,Un)$ and $(P,\Gamma)$ we obtain  an adjunction
\begin{equation}\label{adjunction}
\xymatrix{ St_{\times \E_\infty}: \dSet \ar@<0.3ex>[r] & \EsSet: Un_\Gamma \ar@<0.7ex>[l]}
\end{equation}
Moreover $\EsSet$ carries a left proper, simplicial model structure where weak equivalences and fibrations are just weak equivalences and fibrations of the underlying space of an $E_\infty$-algebra, see \cite[Theorem 4.3. and Proposition 5.3]{spitzweckPhD} or 
\cite{berger2003axiomatic}. For this model structure and the covariant model structure on dendroidal sets the above adjunction \eqref{adjunction} is in fact a Quillen equivalence as shown by Heuts \cite{Heuts2}
\footnote{ Note that Heuts in fact uses a slightly different variant where $P$ is a right Quillen functor (instead of left Quillen). But if a right Quillen equivalence happens to be a left Quillen functor as well, then this left Quillen functor is also an equivalence. Thus Heuts' results immediately imply the claimed fact.}.

\begin{lemma}\label{unfibrant}
Let $X$ be a fibrant $E_\infty$-space. Then $X$ is group-like if and only if $Un_\Gamma(X) \in \dSet$ is fully Kan.
\end{lemma}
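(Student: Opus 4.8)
The plan is to relate the fully Kan condition on $Un_\Gamma(X)$ to a lifting property in $\EsSet$ via the adjunction \eqref{adjunction}, and then to identify that lifting property with group-likeness. First I would use the fact (Theorem \ref{THMFIBRANT}) that a dendroidal Kan complex is fully Kan if and only if it admits fillers for all root horns of the binary extended corollas $EC_{n,1}$; in fact, since $Un_\Gamma(X)$ will automatically be a dendroidal Kan complex whenever $X$ is fibrant (because $Un_\Gamma$ is right Quillen for the covariant model structure and $X$ is covariantly fibrant), the only extra condition to analyze is the root-horn filling for the $EC_{n,1}$. Here it is cleaner to work at the level of $E_\infty$-spaces: a lift of a root horn $\Lambda^{\mathrm{root}}[EC_{n,1}] \to Un_\Gamma(X)$ against $\Omega[EC_{n,1}]$ corresponds, by the adjunction, to a lift in $\EsSet$ of $St_{\times \E_\infty}(\Lambda^{\mathrm{root}}[EC_{n,1}]) \to X$ against $St_{\times \E_\infty}(\Omega[EC_{n,1}])$. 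Since $X$ is fibrant, such a lift exists as soon as the map $St_{\times \E_\infty}(\Lambda^{\mathrm{root}}[EC_{n,1}]) \to St_{\times \E_\infty}(\Omega[EC_{n,1}])$ becomes, after applying $\pi_0$ or the appropriate homotopy-theoretic comparison, the kind of map against which fibrant objects automatically lift — i.e. a trivial cofibration would give it for free, but the point of localizing is precisely that it is \emph{not} a trivial cofibration in the covariant structure, only a stable one.

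The key computation, then, is to make the maps $St_{\times \E_\infty}(\Omega[EC_{n,1}])$ and $St_{\times \E_\infty}(\Lambda^{\mathrm{root}}[EC_{n,1}])$ explicit using the formulas recalled in the Example above. For $n=0$ this is exactly the map $St(s)$ from the localizing map \eqref{map:s}: the root horn $\eta_a \sqcup \eta_c \to \Omega[C_2]$ corresponds to the inclusion $Fr(a) \sqcup Fr(c) \to Fr(a,b,\Delta[1]^2)/(a\cdot_p b \sim (1,1))$, and since $\Delta[1]^2$ is contractible and $c$ is identified up to homotopy with $a\cdot_p b$, this target is equivalent to the free $E_\infty$-algebra on $a$ and $b$ with $c$ a redundant generator; lifting $X$ against this map amounts to: given $x_a$ and an element $x_c$, find $x_b$ with $x_a \cdot x_b \simeq x_c$. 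Taking $x_c$ to be the basepoint forces $x_b$ to be an inverse of $x_a$, so lifting against $St(s)$ forces group-likeness; conversely group-likeness plus contractibility of the relevant mapping spaces gives the lift. The general $EC_{n,1}$ adds $n$ further unary "suspension-type" vertices stacked above $a_n$, which at the level of $E_\infty$-algebras just introduces $n$ more free generators tied together by contractible intervals, contributing nothing new beyond the single inversion constraint — so I would reduce the general case to $n=0$, or equivalently observe directly that the collection of maps $\{St_{\times \E_\infty}(\Lambda^{\mathrm{root}}[EC_{n,1}]) \to St_{\times \E_\infty}(\Omega[EC_{n,1}])\}_n$ generates, up to homotopy, the same localization as the single map $St(s)$, which is the group-completion map.

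Concretely I would organize the proof as: (i) reduce "fully Kan" to "dendroidal Kan plus $EC_{n,1}$ root horns" via Theorem \ref{THMFIBRANT}; (ii) show $Un_\Gamma(X)$ is dendroidal Kan when $X$ is fibrant, using that $Un_\Gamma$ is right Quillen for the covariant structure; (iii) translate the remaining horn-filling conditions across the adjunction into homotopy lifting properties of $X$ against the explicit maps $St_{\times \E_\infty}(\Omega[EC_{n,1}]) \leftarrow St_{\times \E_\infty}(\Lambda^{\mathrm{root}}[EC_{n,1}])$; (iv) compute these maps from the Example, identify them (up to covariant/weak equivalence) with shifted copies of the group-completion map, and conclude that $X$ lifts against all of them iff $\pi_0 X$ is a group. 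The main obstacle I anticipate is step (iv) — handling the free $E_\infty$-algebra presentations carefully enough to see that the homotopical content of each $St_{\times \E_\infty}(\Lambda^{\mathrm{root}}[EC_{n,1}]) \hookrightarrow St_{\times \E_\infty}(\Omega[EC_{n,1}])$ really is just a single "multiplication by $x_a$ is invertible on $\pi_0$" statement, with all the auxiliary intervals $\Delta[1]^2$ and stacked unary vertices absorbed into homotopy equivalences rather than contributing genuine new lifting obstructions. A secondary subtlety is the direction "$Un_\Gamma(X)$ fully Kan $\Rightarrow$ $X$ group-like", which requires knowing that $X \to Un_\Gamma$ followed by $St_{\times\E_\infty}$ recovers enough of $X$; here I would use that the derived unit of the Quillen equivalence \eqref{adjunction} is a weak equivalence on fibrant objects, so that the lifting property of $Un_\Gamma(X)$ transports back to a lifting property of $X$ itself against $St(s)$, forcing the existence of $\pi_0$-inverses.
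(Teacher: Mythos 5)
Your overall strategy --- push the problem across the adjunction \eqref{adjunction}, compute the straightening of the relevant inclusions explicitly as maps of free $E_\infty$-algebras, and recognize the group-completion/shear map --- is the same as the paper's, and your $n=0$ computation (identifying $St(s)$ with a map $Fr(a,c)\to Fr(a,b)$ sending $c\mapsto a\cdot_p b$, and reading off $\pi_0$-invertibility) is exactly the heart of the paper's argument. The difference is which characterization from Theorem \ref{THMFIBRANT} you feed into the adjunction. The paper uses the equivalence of \eqref{vier} with \eqref{eins}: fully Kan equals stably fibrant, which by definition is the single, homotopy-invariant condition that $s^*:\sHom(\Omega[C_2],Un_\Gamma(X))\to\sHom(\eta_a\sqcup\eta_c,Un_\Gamma(X))$ is a weak equivalence. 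Since all the dendroidal sets involved are cofibrant and $X$ is fibrant, this condition transports cleanly along the Quillen equivalence (diagram \eqref{square}) to the statement that the shear map $X\times X\to X\times X$ is a weak equivalence, and the fibre-bundle argument finishes the proof. You instead use characterization \eqref{zwei} (root horns of all the $EC_{n,1}$), which forces you to deal with strict lifting properties and with infinitely many trees.

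That choice creates the two genuine gaps in your outline. First, a strict right lifting property is not invariant under replacing the left-hand map by a weakly equivalent one, so ``identify them up to covariant/weak equivalence with shifted copies of the group-completion map'' does not by itself determine which maps $X$ actually lifts against; you would need to argue separately that these maps are cofibrations becoming trivial in the group-completion localization (for the direction group-like $\Rightarrow$ lifts) and extract the $\pi_0$-statement from one chosen instance with arbitrary attaching maps (for the converse). Second, and more seriously, the claim that the higher $EC_{n,1}$ ``contribute nothing new beyond the single inversion constraint'' is precisely the nontrivial combinatorial content that the paper isolates in Lemmas \ref{partIlemma1} and \ref{partIlemma2}; asserting it on the $E_\infty$-algebra side via free-algebra presentations of $St_{\times\E_\infty}(\Omega[EC_{n,1}])$ is not obviously easier than the dendroidal computation and is not carried out. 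Both gaps disappear if you invoke \eqref{eins}$\Leftrightarrow$\eqref{vier} of Theorem \ref{THMFIBRANT} and work with the mapping-space criterion for the single $2$-corolla horn; the remaining work is then the careful identification of the comparison square and the elementary fact that the shear map of a fibrant $E_\infty$-space is a weak equivalence if and only if $\pi_0(X)$ is a group.
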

\begin{proof}
The condition that $Un_\Gamma(X)$ is fully Kan is by Theorem \ref{THMFIBRANT} equivalent to the map
\begin{equation*}
s^*: \sHom(\Omega[C_2], Un_\Gamma(X)) \to \sHom(\eta_a \sqcup \eta_c, Un_\Gamma(X))
\end{equation*}
being a weak equivalence of simplicial sets. By the Quillen equivalence \eqref{adjunction} and the fact that $\Omega[C_2]$ is cofibrant the space $\sHom(\Omega[C_2], Un_\Gamma(X))$ is homotopy equivalent to
the space $\sHom(St(\Omega[C_2] \times \E_\infty \to \E_\infty), X)$. {We can choose} a morphism
$p: \Omega[C_2] \to \E_\infty$ {(and this choice is essentially unique) because} $\Omega[C_2]$ is cofibrant and $\E_\infty \to *$ is a trivial fibration. In the covariant model structure on $\dSet_{/\E_\infty}$ (see \cite[Section 2]{Heuts2}) the objects $\Omega[C_2] \times \E_\infty \to \E_\infty$ and $\Omega[C_2] \to \E_\infty$ are cofibrant and equivalent. Cofibrancy is immediate and the fact that they are equivalent follows since the forgetful functor to dendroidal sets is a left Quillen equivalence and $\Omega[C_2] \simeq \Omega[C_2] \times \E_\infty$ in $\dSet$. 
Therefore $St(\Omega[C_2] \times \E_\infty \to \E_\infty)$ is weakly equivalent to $St(\Omega[C_2] \to \E_\infty)$ in $\EsSet$. Together we have the following weak equivalence of spaces  $$\sHom(\Omega[C_2], Un_\Gamma(X)) {\simeq} \sHom(St(\Omega[C_2]\to \E_\infty), X).$$

The same reasoning yields a weak equivalence $\sHom(\eta_a \sqcup \eta_c, Un_\Gamma(X)) {\simeq} \sHom(St(\eta_a \sqcup \eta_c \to \E_\infty), X)$ such that the diagram
\begin{equation}\label{square}
\xymatrix{
\sHom(\Omega[C_2], Un_\Gamma(X)) \ar[r]^{s^*} \ar[d]^\sim &  \sHom(\eta_a \sqcup \eta_c, Un_\Gamma(X))\ar[d]^\sim \\
\sHom(St(\Omega[C_2]\to \E_\infty), X) \ar[r]^{s^*} & \sHom(St(\eta_a \sqcup \eta_c \to \E_\infty), X)
}
\end{equation}
commutes.

Finally we use the fact that in the covariant model structure over $\E_\infty$ the leaf inclusion $i: \eta_a \sqcup \eta_b \to \Omega[C_2]$ is a weak equivalence. This implies that there is a further weak equivalence
$St(\eta_a \sqcup \eta_b \to \E_\infty) \xrightarrow{\sim} St(\Omega[C_2] \to \E_\infty)$. As remarked above, the straightening of $\eta \to \E_\infty$ is equal to $Fr(*)$,  the free $E_\infty$-algebra on one generator. Thus 
$St(\eta_a \sqcup \eta_b \to \E_\infty)$ is the coproduct of $Fr(a)$ and $Fr(b)$ which is isomorphic to $Fr(a,b)$ (here we used $a$ and $b$ instead of $*$ to label the generators). Then the above equivalence reads
$Fr(a,b) \xrightarrow{\sim} St(\Omega[C_2] \to \E_\infty)$. The root inclusion $r: \eta_c \to \Omega[C_2]$ induces a further map $r^*: Fr(c) = St(\eta_c \to \E_\infty) \to St(\Omega[C_2] \to \E_\infty)$ and using the explicit description of $St(p)$ given above we see that there is a homotopy commutative diagram
\begin{equation*}
\xymatrix{
St(\Omega[C_2] \to \E_\infty) && Fr(c) \ar[ll]_-{St(r)} \ar[ld]^f \\
& Fr(a,b) \ar[ul]^-{St(i)} &
}
\end{equation*}
where $f$ is defined as the map sending $c$ to the product $a \cdot_p b$. Thus the horn $s: \eta_a \sqcup \eta_c \to C_2$ fits in a homotopy commutative diagram
\begin{equation*}
\xymatrix{
St(\Omega[C_2]\to \E_\infty) && Fr(a,c) \ar[ll]_-{St(s)} \ar[ld]^{sh} \\
& Fr(a , b) \ar[ul]^{St(i)}
}
\end{equation*}
with the map $sh$ that sends $c$ to the binary product of $a$ and $b$ and $a$ to itself.

Putting the induced diagram together with diagram \eqref{square} we obtain the big diagram
\begin{equation}
\xymatrix{
\sHom(\Omega[C_2], Un_\Gamma(X)) \ar[r]^{s^*} \ar[d]^\sim &  \sHom(\eta_a \sqcup \eta_c, Un_\Gamma(X))\ar[d]^\sim \\
\sHom(St(\Omega[C_2]\to \E_\infty), X) \ar[r]^{s^*} \ar[d]^{\sim} & \sHom(Fr(a,c), X) \\
\sHom(Fr(a , b) , X) \ar[ur]^{sh*} &
}
\end{equation}
in which all the vertical arrows are weak equivalences. This shows that $Un_\Gamma(X)$ is fully Kan if and only if
$sh^*: \sHom(Fr(a , b) , X) \to  \sHom(Fr(a,c), X)$ is a weak equivalence. But we clearly have that the domain and codomain of this map are given by $X \times X$. Thus the map in question is given by the shear map
\begin{equation*}
Sh: X \times X \to X \times X \qquad (x,y) \mapsto (x,x \cdot_{p} y)
\end{equation*}
where $x \cdot_{p} y$ is the composition of $x$ and $y$ using the binary operation given by $hc\tau_d(p): \Omega(C_2) \to E_\infty$

It remains to show that a fibrant $E_\infty$-space $X$ is group-like precisely when the shear map $Sh: X \times X \to X \times X$ is a weak homotopy equivalence. This is well known \cite[chapter III.4]{Whitehead}, but we include it for completeness. Assume first that the shear map is a weak equivalence. Then the induced shear map $\pi_0(X) \times \pi_0(X) \to \pi_0(X) \times \pi_0(X)$ is an isomorphism. This shows that $\pi_0(X)$ is a group, thus $X$ is group-like. Assume conversely that $X$ is group-like and $y \in X$ is a point in $X$. Then there is an inverse $y' \in X$ together with a path connecting $y' \cdot_p y$ to the point $1$. This induces a homotopy inverse for the map $R_y: X \to X$ given by right multiplication with $y$ (for the fixed binary operation).  Now the shear map is a map of fibre bundles
\begin{equation*}
\xymatrix{
X \times X  \ar[rr]^{Sh} \ar[rd]_{pr_1} && X \times X \ar[ld]^{pr_1} \\
&X&
}
\end{equation*}
Thus the fact that it is over each point $y \in X$ a weak equivalence as shown above already implies that the shear map is a weak equivalence.
\end{proof}

Lemma \ref{unfibrant} shows that fully Kan dendroidal sets correspond to group-like $E_\infty$-spaces. We want to turn this into a statement about model structures. Therefore we need a model structure on $\EsSet$ where the fibrant objects are precisely the group-like $E_\infty$-spaces. 

\begin{proposition}
There is a left proper, combinatorial model structure on $\EsSet$ where the fibrant objects are precisely the fibrant, group-like $E_\infty$-spaces and which is a left Bousfield localization of the standard model structure on $\EsSet$. We call it the \emph{group-completion} model structure.
\end{proposition}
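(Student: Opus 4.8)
The plan is to realize the group-completion model structure as a left Bousfield localization of the standard model structure on $\EsSet$ at a single map, and then to read off its fibrant objects from the computation already carried out in the proof of Lemma \ref{unfibrant}. First I would recall that the standard model structure on $\EsSet$ is left proper, simplicial \emph{and} combinatorial: it is the model structure transferred along the forgetful functor $\EsSet \to \sSet$ from the Kan--Quillen model structure, and this transfer exists and produces a combinatorial model category, see \cite{spitzweckPhD, berger2003axiomatic}. Being left proper and combinatorial, it admits a left Bousfield localization at any set $S$ of morphisms \cite[A.3]{HTT}: the localization $L_S\EsSet$ is again left proper, combinatorial (and simplicial), has the same cofibrations, and its fibrant objects are exactly the standardly fibrant $X$ that are $S$-local, i.e.\ such that $f^*\colon \sHom(B,X)\to \sHom(A,X)$ is a weak equivalence of simplicial sets for every $f\colon A\to B$ in $S$ whose source and target are cofibrant (so that $\sHom$ computes the derived mapping space).

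Next I would choose the localizing set to be $S=\{sh\}$, where $sh\colon Fr(a,c)\to Fr(a,b)$ is the map of free $E_\infty$-algebras constructed in the proof of Lemma \ref{unfibrant}, sending the generator $c$ to the binary product $a\cdot_p b$ and $a$ to $a$. Both $Fr(a,b)$ and $Fr(a,c)$ are cofibrant, since they are free $E_\infty$-algebras on the cofibrant simplicial set $\Delta[0]\sqcup \Delta[0]$ and the free functor is left Quillen. I then \emph{define} the group-completion model structure to be $L_S\EsSet$; by the discussion above it is automatically left proper, combinatorial and simplicial, and it is a left Bousfield localization of the standard model structure, as required. Its fibrant objects are precisely the standardly fibrant $X$ for which $sh^*\colon \sHom(Fr(a,b),X)\to \sHom(Fr(a,c),X)$ is a weak equivalence.

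It remains to match this $S$-local condition with group-likeness, and here I would simply invoke the final part of the proof of Lemma \ref{unfibrant}: there the map $sh^*$ is identified, via the canonical isomorphisms $\sHom(Fr(a,b),X)\cong X\times X\cong \sHom(Fr(a,c),X)$ coming from the free--forgetful adjunction, with (a map homotopic to) the shear map $Sh\colon X\times X\to X\times X$, $(x,y)\mapsto(x,x\cdot_p y)$, and it is shown that for a fibrant $E_\infty$-space $X$ the shear map is a weak homotopy equivalence if and only if $\pi_0(X)$ is a group, i.e.\ $X$ is group-like; independence of the choice of binary operation $\cdot_p$ is clear because $E_\infty$ is an $E_\infty$-operad, so any two binary operations are homotopic. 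Hence the fibrant objects of $L_S\EsSet$ are exactly the fibrant, group-like $E_\infty$-spaces. The only mildly delicate point is the bookkeeping that guarantees the localizing map is genuinely a map between cofibrant objects whose derived precomposition is the shear map, but this is precisely what Lemma \ref{unfibrant} establishes, so no further work is needed.
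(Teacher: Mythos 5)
Your proposal is correct and follows essentially the same route as the paper: the paper also obtains the group-completion model structure by localizing the (left proper, simplicial, combinatorial) standard model structure on $\EsSet$ at the single shear map between free $E_\infty$-algebras on two generators, and identifies the resulting local objects with the group-like ones via the computation at the end of the proof of Lemma \ref{unfibrant}. Your version merely spells out the cofibrancy of the free algebras and the identification of $sh^*$ with the shear map in more detail than the paper does.
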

\begin{proof}
Since the model category of $\EsSet$ is left proper, simplicial and combinatorial the existence follows from general existence results provided that we can characterize the property of being group-like as a lifting property against a set of morphisms. The proof of Lemma \ref{unfibrant} already contains the argument, namely let the set consist of one map from the free $E_\infty$-algebra on two generators to itself given by the shear map (actually there is one shear map for each binary operation in $E_\infty$, but we simply pick one out).
\end{proof}

It is well known that group-like $E_\infty$-spaces model all connective spectra by the use of a delooping machine, see \cite{may74}. More precisely the $\infty$-category of group-like $E_\infty$-spaces obtained from the group-completion model structure is equivalent as an $\infty$-category to the $\infty$-category of connective spectra, see e.g. \cite[Remark 5.1.3.17]{HigherAlgebra}.

\begin{thm}\label{thm_equivalence}
The stable model structure on dendroidal sets is Quillen equivalent to the group-completion model structure on $\EsSet$ by the adjunction \eqref{adjunction}. Thus the stable model structure on dendroidal sets is a model for connective spectra in the sense that there is an equivalence of $\infty$-categories.
\end{thm}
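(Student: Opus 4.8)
The plan is to deduce the Quillen equivalence formally from the machinery that has already been assembled, exploiting the fact that both sides of the adjunction \eqref{adjunction} have been described as left Bousfield localizations of model structures that are \emph{already} Quillen equivalent via the very same adjunction. Concretely, Heuts' theorem gives that $St_{\times \E_\infty}: \dSet \to \EsSet$ (with the covariant and standard model structures respectively) is a Quillen equivalence. The stable model structure is, by definition, the left Bousfield localization of the covariant model structure at the single map $s$ of \eqref{map:s}, and the group-completion model structure is the left Bousfield localization of the standard model structure at a single shear map. So the first step is the general principle: if $(F,G)$ is a Quillen equivalence $M \rightleftarrows N$ and one localizes $M$ at a set $S$ of maps and $N$ at the set $\mathbf{L}F(S)$ of derived images, then the induced adjunction between the localizations $L_S M \rightleftarrows L_{\mathbf{L}F(S)} N$ is again a Quillen equivalence. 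This is standard (e.g.\ via \cite[A.3]{HTT} together with the universal property of left Bousfield localization), and I would cite it rather than reprove it.

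The substantive step is then to identify the two localizations correctly, i.e.\ to check that the derived image under $St_{\times \E_\infty}$ of the map $s: \eta_a \sqcup \eta_c \to \Omega[C_2]$ is (up to weak equivalence and up to the freedom of adding further maps that are already equivalences) the shear map used to define the group-completion model structure. But this is precisely the content of the computation carried out inside the proof of Lemma \ref{unfibrant}: there we showed that $St(\eta_a \sqcup \eta_b \to \E_\infty) \simeq Fr(a,b) \xrightarrow{\sim} St(\Omega[C_2]\to\E_\infty)$, and that under this identification the map $St(s)$ corresponds, up to homotopy, to the self-map $sh: Fr(a,c) \to Fr(a,b)$ sending $a \mapsto a$ and $c \mapsto a\cdot_p b$ — whose induced map on mapping spaces out of a fibrant $X$ is exactly the shear map $Sh: X\times X \to X\times X$. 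Hence localizing the covariant structure at $s$ corresponds under the Quillen equivalence to localizing the standard structure on $\EsSet$ at the shear map, which is by definition the group-completion structure. I would phrase this as: an object is $s$-local iff its straightening is $Sh$-local (fibrancy being preserved and reflected along the Quillen equivalence), which is the Lemma \ref{unfibrant} statement at the level of \emph{all} fibrant objects, and then invoke that a left Bousfield localization is determined by its class of local objects.

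The last step is to upgrade the Quillen equivalence between the stable model structure and the group-completion model structure to the asserted ``model for connective spectra'' statement. Here I would simply compose: by the proposition preceding this theorem (and the cited \cite[Remark 5.1.3.17]{HigherAlgebra}), the $\infty$-category underlying the group-completion model structure on $\EsSet$ is equivalent to the $\infty$-category of connective spectra; since a Quillen equivalence induces an equivalence of the associated $\infty$-categories (localizations at weak equivalences), we obtain the desired equivalence of $\infty$-categories between the stable dendroidal model structure and connective spectra.

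The main obstacle is the bookkeeping in the identification step: one must be careful that the ``derived image'' of $s$ is computed with a cofibrant model (which is why $\Omega[C_2]$ and $\eta_a \sqcup \eta_c$ being cofibrant, and $\E_\infty \to *$ being a trivial fibration so that the lift $p$ exists and is essentially unique, all matter), and that passing between $\dSet$ and $\dSet_{/\E_\infty}$ via $(P,\Gamma)$ does not disturb the relevant local objects. All of this is already handled in the proof of Lemma \ref{unfibrant}, so in practice this theorem is a short formal consequence of that lemma plus the general localization principle; the proof should consist mostly of assembling these pieces and citing \cite{Heuts2} and \cite{HTT}.
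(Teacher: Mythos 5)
Your proposal is correct, and its substantive content coincides with the paper's: both deduce the theorem from Lemma \ref{unfibrant} together with a general principle about left Bousfield localizations of Quillen equivalences, plus the cited identification of group-like $E_\infty$-spaces with connective spectra. The routes differ only in which form of the general principle is used. You invoke the standard fact that localizing the source of a Quillen equivalence at a set $S$ and the target at the derived image of $S$ again yields a Quillen equivalence, which forces you to identify the group-completion localization with the localization of $\EsSet$ at the derived image of $s$; you do this correctly via the computation inside the proof of Lemma \ref{unfibrant} identifying $St(s)$ up to homotopy with $sh\colon Fr(a,c)\to Fr(a,b)$. The paper instead proves its own auxiliary lemma whose hypothesis is phrased purely in terms of fibrant objects: for a Quillen equivalence $L\dashv R$ with tractable simplicial source and prescribed localizations $C'$, $D'$, if a fibrant $d$ is $D'$-fibrant exactly when $R(d)$ is $C'$-fibrant, then $(L,R)$ descends to a Quillen equivalence of the localizations; this hypothesis is precisely the statement (rather than the internal computation) of Lemma \ref{unfibrant} combined with Theorem \ref{THMFIBRANT}. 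The paper's formulation avoids matching up localizing sets and avoids relying on a reference stated for cellular model categories (the usual source for your principle), at the cost of roughly a page of verification that $L$ preserves locally trivial cofibrations (this is where tractability enters, providing generating trivial cofibrations with cofibrant domains, needed because the adjunction is not assumed simplicial) and that the derived right adjoint between the localized homotopy categories is fully faithful and essentially surjective. Your route is shorter on paper but shifts the burden to locating a correct reference for the localization principle in the combinatorial, non-simplicial-adjunction setting; mathematically the two arguments carry the same content and both hinge on Lemma \ref{unfibrant}.
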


The theorem follows from Lemma \ref{unfibrant} and the following more general statement about left Bousfield localizations and Quillen equivalences. Recall from \cite[Definition 1.3.]{Barwick07} that a combinatorial model category is called \emph{tractable} if it admits a set of generating cofibrations and generating trivial cofibrations with cofibrant domains and codomains. It turns out that it suffices to check this for generating cofibrations  \cite[Corollary 1.12.]{Barwick07}. Thus all model structures on dendroidal sets are clearly tractable.

\begin{lemma}
Let $C$ and $D$ be simplicial model categories with $C$ tractable and a (not necessarily simplicial) Quillen equivalence 
\begin{equation*}
\xymatrix{ L: C \ar@<0.3ex>[r] & D: R \ar@<0.7ex>[l]}
\end{equation*}
Moreover let $C'$ and $D'$ be left Bousfield localizations of $C$ and $D$ repectively. Assume $R$ has the property that a fibrant object $d \in D$ is fibrant in $D'$ if and only if $R(d)$ is fibrant in $C'$.

Then $(L \dashv R)$ is a Quillen equivalence between $C'$ and $D'$.
\end{lemma}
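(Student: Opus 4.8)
The plan is to argue in two steps: first that $(L \dashv R)$ is still a Quillen adjunction once we pass to the localizations $C'$ and $D'$, and then that its total right derived functor $\mathbb{R}R' \colon \mathrm{Ho}(D') \to \mathrm{Ho}(C')$ is an equivalence of homotopy categories, by exhibiting it as the restriction of the equivalence $\mathbb{R}R \colon \mathrm{Ho}(D) \xrightarrow{\sim} \mathrm{Ho}(C)$ (coming from the given Quillen equivalence) to the full reflective subcategories $\mathrm{Ho}(D') \subseteq \mathrm{Ho}(D)$ and $\mathrm{Ho}(C') \subseteq \mathrm{Ho}(C)$ determined by the localizations.

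For the first step, recall that a left Bousfield localization changes neither the cofibrations nor the trivial fibrations, so $L$ still preserves cofibrations and $R$ still preserves trivial fibrations when regarded as functors between $C'$ and $D'$. Since $C$ is tractable, so is $C'$, hence $C'$ has a set $J'$ of generating trivial cofibrations with cofibrant domains and codomains, and it suffices to check that $L(j)$ is a trivial cofibration of $D'$ for each $j \colon A \to B$ in $J'$. Such a $j$ is in particular a cofibration, hence so is $L(j)$; and $j$ is a weak equivalence of $C'$, i.e.\ a local equivalence between cofibrant objects, so for every fibrant object $Z$ of $D'$ the derived adjunction yields a weak equivalence $\mathrm{Map}^{h}_{D}(L(j), Z) \simeq \mathrm{Map}^{h}_{C}(j, R(Z))$. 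By hypothesis $R(Z)$ is fibrant in $C'$, i.e.\ local, so the right-hand side is a weak equivalence; hence $L(j)$ is a local equivalence in $D'$, as needed. (Equivalently, one may invoke the standard criterion that a Quillen adjunction descends to left Bousfield localizations exactly when the right adjoint sends local fibrant objects to local fibrant objects; cf.\ \cite[A.3]{HTT}.)

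For the second step, recall that a Quillen adjunction is a Quillen equivalence precisely when its total right derived functor is an equivalence of homotopy categories. The identity adjunctions $C \rightleftarrows C'$ and $D \rightleftarrows D'$ realize $\mathrm{Ho}(C')$ and $\mathrm{Ho}(D')$ as the full subcategories of $\mathrm{Ho}(C)$ and $\mathrm{Ho}(D)$ spanned by the local objects (those admitting a fibrant representative in $C'$, resp.\ $D'$). Composing derived functors, the square whose vertical maps are these inclusions and whose horizontal maps are the total right derived functors of $R$ commutes up to natural isomorphism: this uses on one side that $\mathrm{id}\colon D' \to D$ preserves fibrant objects and on the other that $R \colon D' \to C'$ preserves fibrant objects, which is the ``only if'' direction of the hypothesis (already used in the first step). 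Hence, under these identifications, $\mathbb{R}R'$ is the restriction of the equivalence $\mathbb{R}R$ to full subcategories, so it is automatically fully faithful; it remains to prove essential surjectivity. Given $x \in \mathrm{Ho}(C')$ represented by a $C'$-fibrant object $\bar x$, essential surjectivity of $\mathbb{R}R$ yields a fibrant--cofibrant object $\bar y$ of $D$ with $R(\bar y) \cong \bar x$ in $\mathrm{Ho}(C)$. Since being local is invariant under isomorphism in $\mathrm{Ho}(C)$ among $C$-fibrant objects, $R(\bar y)$ is local, i.e.\ fibrant in $C'$, and now the ``if'' direction of the hypothesis forces $\bar y$ to be fibrant in $D'$; thus $\bar y$ represents an object of $\mathrm{Ho}(D')$ mapping to $x$. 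This proves essential surjectivity, so $(L \dashv R)$ is a Quillen equivalence between $C'$ and $D'$.

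The only genuine obstacle is the bookkeeping between the two meanings of ``fibrant'' --- ``fibrant in the localized model structure'' versus ``fibrant--cofibrant representative of an object of the homotopy category''. One has to check that being local is invariant under weak equivalence among $C$-fibrant (resp.\ $D$-fibrant) objects, so that the hypothesis --- stated for honestly fibrant objects --- may be applied to the representatives produced by essential surjectivity of $\mathbb{R}R$, and that the derived functor $\mathbb{R}R'$ computed in the localized model structures genuinely coincides with the restriction of $\mathbb{R}R$. These are exactly the points at which the ``only if'' half of the hypothesis (used for the descent of the Quillen adjunction and for the commuting square) and the ``if'' half (used for essential surjectivity) come in.
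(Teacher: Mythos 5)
Your proposal is correct and follows essentially the same route as the paper's own proof: reduce the Quillen adjunction claim to generating trivial cofibrations with cofibrant (co)domains via tractability and test locality with derived mapping spaces against $R$ of local fibrant objects, then deduce full faithfulness of the derived right adjoint from the reflective-subcategory square and essential surjectivity from invariance of locality under weak equivalence together with the ``if'' direction of the hypothesis. No substantive differences to report.
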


\begin{proof}
For simplicity we will refer to the model structures on $C$ and $D$ as the global model structures and to the model structures corresponding to $C'$ and $D'$ as the local model structures. First we have to show that the pair $(L,R)$ induces a Quillen adjunction in the local model structures. We will show that $L$ preserves local cofibrations and trivial cofibrations. Since local and global cofibrations are the same, this is true for cofibrations.  Thus we need to show it for trivial cofibrations and it follows by standard arguments if we can show it for generating trivial cofibrations.  Thus let $i: a \to b$ be a generating locally trivial cofibration in $C$. Now we can assume that $a$ and $b$ are cofibrant since $C$ is tractable. Then the induced morphism $\sHom(b, c) \to \sHom(a,c)$ on mapping spaces is a weak equivalence for every locally fibrant object $c \in C$. In particular for $c = R(d)$ with $d \in D$ locally fibrant. Now we use that there are weak equivalences $\sHom(b, R(d) ) \cong \sHom( Lb, d)$ and 
$\sHom(a, R(d) ) \cong \sHom( La, d)$ of simplicial sets which stem from the fact that the pair $(L,R)$ induces an adjunction of $\infty$-categories. {This shows} that the induced morphism $\sHom(Lb, d) \to \sHom(La, d)$ is a weak equivalence for every locally fibrant object $d \in D$. This shows that $La \to Lb$ is a local weak equivalence.

It remains to show that $(L,R)$ is a Quillen equivalence in the local model structures. Therefore it {suffices} to show that the right derived functor
\begin{equation*}
R': Ho(D') \to Ho(C')
\end{equation*}
is an equivalence of categories. {Since} $D'$ and $C'$ are Bousfield localizations $Ho(C')$ is a full reflective subcategory of $Ho(C)$ and correspondingly for $D$ and $D'$. Moreover, there is a commuting square
\begin{equation*}
\xymatrix{
Ho(D') \ar[r]^{R'} \ar@{^(->}[d]& Ho(C') \ar@{^(->}[d] \\
Ho(D)  \ar[r]^{R} & Ho(C)
}
\end{equation*}
Since $R$ is an equivalence {it} follows that $R'$ is fully faithful. In order to show that $R'$ is essentially surjective pick an {object} $c$ in $Ho(C')$ 
represented by a locally fibrant object {$c$} of $C$. Since $R$ is essentially surjective we find an element $d \in D$ which is globally fibrant such that $R(d)$ is equivalent to $c$ in $Ho(C)$. But this implies that $R(d)$ is also locally fibrant (i.e. lies in $Ho(C')$) since this is a property that is invariant under weak equivalences in Bousfield localizations. Therefore we conclude that $d$ is locally fibrant from the assumption on $R$. This shows that $R'$ is essentially surjective, hence an equivalence of categories.
\end{proof}
The fact that the stable model structure is equivalent to connective spectra has the important consequence that a cofibre sequence in this model structure is also a fibre sequence, which is well-known for connective spectra (note that the converse is not true in connective spectra, but in spectra).
\begin{corollary}
Let $X \to Y \to Z$ be a cofibre sequence of dendroidal sets in any of the considered model structures. Then
$$ i^*X_K \to i^* Y_K \to i^* Z_K $$
 is a fibre sequence of simplicial sets. Here $(-)_K$ denotes a fully Kan (fibrant) replacement.
\end{corollary}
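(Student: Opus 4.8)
The plan is to pass to the stable model structure, transport the cofibre sequence to connective spectra via the Quillen equivalence of Theorem~\ref{thm_equivalence}, use that cofibre sequences of connective spectra are fibre sequences, transport back, and then apply the right Quillen functor $i^*$. The first point is that a cofibre sequence $X\to Y\to Z$ is by definition a homotopy cocartesian square whose remaining corner is the terminal object $*$, and that $*$ is terminal in $\dSet$ regardless of the model structure. The identity functor from the Cisinski--Moerdijk model structure, or from the covariant one, to the stable model structure is a left Quillen functor (the stable structure being a left Bousfield localization of the covariant one, which in turn is a localization of the Cisinski--Moerdijk one), so it preserves homotopy pushouts and carries our square to a homotopy cocartesian square with corner $*$ in the stable model structure. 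Hence it suffices to treat the stable case, with $(-)_K$ a fully Kan replacement.

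Next I would identify $*$ with the zero object. The terminal dendroidal set is $N_d$ of the terminal symmetric monoidal category, which is a strict Picard groupoid, so by Proposition~\ref{prop:strict} the object $*$ is strictly fully Kan, hence fully Kan, hence stably fibrant (Theorem~\ref{THMFIBRANT}); being terminal in $\dSet$ it is terminal in the stable homotopy category, which by Theorem~\ref{thm_equivalence} is equivalent to the $\infty$-category of connective spectra and is therefore pointed, so its terminal object is a zero object. Note also $i^**=\Delta[0]$. The equivalence of Theorem~\ref{thm_equivalence} preserves homotopy colimits and the zero object, so it carries the homotopy cocartesian square with corner $*$ to a homotopy cocartesian square of connective spectra with zero corner, i.e.\ to a cofibre sequence of connective spectra. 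Such a sequence is also a fibre sequence (the classical fact recalled in the statement; the fibre computed in spectra is connective, being the first term, hence coincides with the fibre computed in connective spectra). Applying the inverse equivalence, which likewise preserves homotopy limits and the zero object, we conclude that the square
\[
\begin{array}{ccc}
X & \longrightarrow & Y\\
\downarrow & & \downarrow\\
* & \longrightarrow & Z
\end{array}
\]
is homotopy cartesian in the stable model structure.

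Finally I would apply $i^*$. Taking fully Kan replacements $X_K,Y_K,Z_K$ and keeping $*$ (already fibrant), the corresponding square is still homotopy cartesian. Since $i^*$ is a right Quillen functor from the stable model structure to the Kan--Quillen model structure on $\sSet$ (Theorem~\ref{simplicial}(2)), it preserves fibrant objects and homotopy cartesian squares, so
\[
\begin{array}{ccc}
i^*X_K & \longrightarrow & i^*Y_K\\
\downarrow & & \downarrow\\
\Delta[0] & \longrightarrow & i^*Z_K
\end{array}
\]
is homotopy cartesian in $\sSet$ with all four objects Kan complexes. Together with the null-homotopy of the composite $i^*X_K\to i^*Z_K$ induced by the corner map $*\to Z$ of the cofibre sequence, this is precisely the statement that $i^*X_K\to i^*Y_K\to i^*Z_K$ is a fibre sequence of simplicial sets.

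The step requiring the most care is the middle one: one must be sure that the terminal dendroidal set genuinely is the zero object, so that a cofibre sequence of dendroidal sets corresponds to a cofibre sequence of connective spectra (with the zero spectrum, and not some other object, in the corner), and that the composite passage through the group-completion model structure on $\EsSet$ and then to connective spectra preserves the relevant homotopy (co)limits as well as the zero object. Everything else is formal: left and right Quillen functors, and equivalences of $\infty$-categories, respect homotopy pushouts and pullbacks.
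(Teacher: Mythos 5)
Your proof is correct and follows essentially the same route as the paper: pass to the stable model structure via the Bousfield localization, use the equivalence with connective spectra to convert the cofibre sequence into a fibre sequence, and apply the right Quillen functor $i^*$. The paper's proof is just terser, leaving implicit the verification you carry out that the terminal dendroidal set plays the role of the zero object under the equivalence.
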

\begin{proof}
Since the stable model structure on dendroidal sets is a Bousfield localization of the other model structures we see that a cofibre sequence in any model structure is also a cofibre sequence in the stable model structure. But then it is also a fibre sequence as remarked above. The functor $i^*$ is right Quillen, as shown in Theorem \ref{simplicial}. Thus it sends fibre sequences in $\dSet$ to fibre sequences in $\sSet$, which concludes the proof.
\end{proof}

\section{Proof of Theorem \ref{THMFIBRANT}, part I}\label{partI}

Recall from Definition \ref{ext_corolla} the notion of binary extended corollas.
Also recall from \cite{Heuts1} that the weakly saturated class generated by non-root horns of arbitrary trees is called the class of left anodynes. The weakly saturated class generated by inner horn inclusions of arbitrary trees is called the class of inner anodynes. Analogously we set:
\begin{definition}\label{binextleft}
The weakly saturated class generated by non-root horns of all trees and root horns of binary extended corollas is called the class of \emph{binary extended left anodynes}.
\end{definition}

\begin{proposition}\label{PROP1}
A dendroidal set $D$ is stably fibrant if and only if $D$ is a dendroidal Kan complex and it admits fillers for all root horns of {binary} extended corollas $EC_{n,1}$.
\end{proposition}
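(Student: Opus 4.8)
The plan is to convert the statement, via the formal theory of left Bousfield localizations, into an equality of two weakly saturated classes of normal monomorphisms, and then to establish that equality by an explicit analysis of the horns occurring inside a Boardman--Vogt tensor product.

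\emph{Reduction.} A dendroidal set is covariantly fibrant exactly when it is a dendroidal Kan complex, and the stable model structure is the left Bousfield localization of the covariant one at $s$; as that localization is again simplicial (Theorem~\ref{simplicial}), a dendroidal Kan complex $D$ is stably fibrant if and only if it is $s$-local, i.e. $s^*\colon\sHom(\Omega[C_2],D)\to\sHom(\eta_a\sqcup\eta_c,D)$ is a weak equivalence. Since $D$ is covariantly fibrant and $s$ is a normal monomorphism, $s^*$ is a fibration between Kan complexes, hence a weak equivalence precisely when it is a trivial fibration, precisely when $D$ lifts against the pushout--products $q_n$ of $s$ with the boundary inclusions $\partial\Delta[n]\hookrightarrow\Delta[n]$ (formed for the simplicial tensoring of $\dSet$) for all $n\ge0$. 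Here $q_0=s=\Lambda^{b}[C_2]\hookrightarrow\Omega[C_2]$ is the root horn of $EC_{0,1}=C_2$. As a dendroidal Kan complex also lifts against all non-root horns, Proposition~\ref{PROP1} is equivalent to the assertion that the non-root horns together with the $q_n$ generate the same weakly saturated class as the non-root horns together with the root horns of the extended corollas $EC_{n,1}$ --- namely the class of binary extended left anodynes of Definition~\ref{binextleft}.

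\emph{The two inclusions.} That each $q_n$ is a binary extended left anodyne I would prove by filtering $\Omega[C_2]\otimes i_!\Delta[n]$ relative to the source of $q_n$. The non-degenerate dendrices of this tensor product are indexed by the shuffles of $C_2$ with the linear tree $i_!\Delta[n]$ --- obtained by percolating the $n$ unary vertices past the binary vertex --- and their faces, while the source of $q_n$ is the union of everything lying over $\partial\Delta[n]$ with the two ``cylinders'' on the edges $a$ and $c$ over $\Delta[n]$. Adjoining the missing non-degenerate dendrices in a suitably chosen order, each step is a pushout of a horn $\Lambda^\alpha[T]\hookrightarrow\Omega[T]$, and one checks that $\alpha$ is always an inner edge or a non-root outer vertex --- so a non-root horn --- except for the dendrices adjoined along the edge $b$, which is absent from the source: there $\alpha$ is the root vertex of $T$ and $T$ is forced to be an extended corolla $EC_{m,1}$ with $m\le n$. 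For the reverse inclusion, that the root horn inclusion of $EC_{n,1}$ lies in the weakly saturated class generated by the non-root horns and the $q_m$, I would induct on $n$: $EC_{n,1}$ is obtained from $EC_{n-1,1}$ by grafting a unary corolla onto the relevant leaf, and one builds the root horn inclusion of $EC_{n,1}$ out of that of $EC_{n-1,1}$, the map $q_n$, and left anodynes; the base case $n=0$ is $q_0=s$ itself.

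\emph{Main obstacle and a cross-check.} The delicate step is the first filtration: choosing the order on the non-degenerate dendrices of $\Omega[C_2]\otimes i_!\Delta[n]$ not lying in the source of $q_n$, and checking that the distinguished face at each attachment is always an inner edge, a non-root outer vertex, or the root vertex of an extended corolla --- the shuffle bookkeeping that the introduction describes as technically demanding. As a cross-check on the implication from ``dendroidal Kan and lifting against all root horns of the $EC_{n,1}$'' to ``stably fibrant'', note that already the case $n=0$ suffices: lifting against $q_0=s$ forces the monoid $\pi_0(i^*D)$ --- which is commutative because, $D$ being a dendroidal Kan complex, $i^*D$ is (by the Heuts equivalence) the underlying space of an $E_\infty$-space --- to admit division, hence to be a group; then the shear map of the group-like $E_\infty$-space $i^*D$, which up to the equivalence $\sHom(\Omega[C_2],D)\xrightarrow{\sim}i^*D\times i^*D$ induced by the leaf anodyne $\eta_a\sqcup\eta_b\hookrightarrow\Omega[C_2]$ is exactly $s^*$, is a weak equivalence, so $D$ is $s$-local.
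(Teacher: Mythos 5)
Your reduction and your treatment of the forward inclusion track the paper closely: the paper likewise characterizes stable fibrancy of a dendroidal Kan complex $D$ by lifting against the pushout-products $q_n$ of $s$ with $\partial\Omega[L_n]\to\Omega[L_n]$, and its Lemma \ref{partIlemma2} is essentially your first filtration, the exceptional attachment being the root horn of the binary extended corolla $W=(a_0,b_0,\dots,b_n,c_n)\cong EC_{n,1}$. So the ``if'' direction of the proposition is in order.

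The gap is in the converse (stably fibrant $\Rightarrow$ fillers for root horns of $EC_{n,1}$), which you reduce to the claim that each root horn inclusion $\Lambda^u[EC_{n,1}]\to\Omega[EC_{n,1}]$ lies in the weakly saturated class generated by the non-root horns and the $q_m$, to be proved by an induction you describe in a single clause. This is strictly stronger than what the proposition needs, and there is no evidence it can be proved along the lines you indicate. What one gets for free is only that the pushout $k\colon\Lambda^u[EC_{n,1}]\to P$ of $q_n$ along the collapse map $a_i\mapsto a_i$, $b_i\mapsto b$, $c_i\mapsto c$ belongs to your class; but $P$ is not $\Omega[EC_{n,1}]$, and the $EC_{n,1}$-shaped dendrex of $P$ that would fill the horn (the image of $\gamma_n=(a_0,\dots,a_n,b_n,c_n)$) restricts to the horn only at the \emph{opposite} end of a cylinder, so there is no retract diagram of arrows exhibiting $\Lambda^u[EC_{n,1}]\to\Omega[EC_{n,1}]$ as a retract of $k$. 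Accordingly the paper's Lemma \ref{partIlemma1} proves only the object-level lifting statement: it factors $k$ as $p\circ j$ through $\bigl(\Lambda^u EC_{n,1}\otimes\Omega[L_1]\bigr)\cup\bigl(EC_{n,1}\otimes\{0\}\bigr)$, transports the resulting filler from time $0$ to time $1$ by extending along the left anodyne inclusion into $EC_{n,1}\otimes\Omega[L_1]$ --- a step that uses that $D$ is covariantly fibrant, not merely that certain maps lie in a saturated class --- and then restricts to $EC_{n,1}\otimes\{1\}$. You need either an argument of this homotopy-theoretic kind or an actual construction backing your inductive claim; as written, the harder half of the proposition is asserted rather than proved. (Your closing ``cross-check'' is essentially sound but not independent: it invokes Heuts's Quillen equivalence and the straightening computation of Lemma \ref{unfibrant} from Section \ref{sec:equivalence}, and in any case it only re-establishes the ``if'' direction.)
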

\begin{proof} 
We will show in Lemma \ref{partIlemma1} that a stably fibrant dendroidal set $D$ admits lifts against the root horn inclusion of $EC_{n,1}$.

Conversely, assume that $D$ is {a} dendroidal Kan {complex} and admits {lifts against} the root horn {inclusions} of $EC_{n,1}$. Then $D$ clearly admits lifts against all binary extended left anodyne morphisms. In Lemma \ref{partIlemma2} we show that the inclusion
\begin{equation*}
\Big( \Lambda^b [C_2] \otimes \Omega[L_n]\Big) ~ \cup ~ \Big( \Omega[C_2] \otimes \partial \Omega[L_n] \Big) \longrightarrow \Omega[C_2] \otimes \Omega[L_n])
\end{equation*}
is binary extended left anodyne. {This implies that} $D$ is stably fibrant.
\end{proof}

In the rest of the paper we prove {some technical lemmas} and for this we fix some terminology. 
We denote the leaves of the corolla $C_2$ by $a$ and $b$ and its root {edge} by $c$.
We denote the edges of the linear tree $L_n$ by $0,1,...,n$ {as indicated in the picture}
\begin{equation*}
\xymatrix@R=10pt@C=12pt{
&&&&\\
&&*=0{\bullet}\ar@{-}[u]^0&&\\
&&*=0{}\ar@{-}[u]^1&&\\
L_n=&&\dots&&\\
&&*=0{\bullet}\ar@{-}[u]^{n-1}&&\\
&&*=0{}\ar@{-}[u]^n&&
}
\end{equation*}
We denote the edges in the tensor product $\Omega[C_2]\otimes\Omega[L_n]$ by $a_i, b_i, c_i$ instead of $(a,i), (b,i), (c,i)$ and we {let $T_k$ for $k=0,1,...,n$ be} the {unique} shuffle of $\Omega[C_2]\otimes\Omega[L_n]$ 
that has the edges $a_k, b_k$ and $c_k$:
\begin{equation*} \xymatrix@R=10pt@C=12pt{
&&&&&&& \\
& *=0{\bullet} \ar@{-}[u]^{a_0} && *=0{\bullet}\ar@{-}[u]_{b_0} &&&& \\
& \dots \ar@{-}[u]^{a_1} && \dots \ar@{-}[u]_{b_1} &&&& \\
&&*=0{\bullet} \ar@{-}[ul]^{a_k} \ar@{-}[ur]_{b_k} &&&&&\\
&&*=0{}\ar@{-}[u]^{c_k} &&&&& \\
T_k = \qquad && \dots &&&& \\
&&*=0{\bullet}\ar@{-}[u]^{c_{n-1}} &&&&& \\
&&*=0{}\ar@{-}[u]^{c_n} &&&&& \\
}
\end{equation*}
{We also use the notation}
\begin{equation*}
D_iT_j=\left\{ 
\begin{array}{ll}
\partial_{a_i}\partial_{b_i} T_j, & i<j, \\ \partial_{c_i}T_j, & i>j.
\end{array}\right. 
\end{equation*}
We denote the subtrees of a shuffle as sequences of its edges with indices in the ascending order (since there is no danger of ambiguity).
For example we denote the following tree
\begin{equation} \label{ex_subtree}
 \xymatrix@R=10pt@C=12pt{
&&&&&&& \\
& *=0{\bullet} \ar@{-}[u]^{a_0} && *=0{\bullet}\ar@{-}[u]_{b_2} &&&& \\
& *=0{\bullet} \ar@{-}[u]^{a_1} && *=0{\bullet} \ar@{-}[u]_{b_3} &&&& \\
&&*=0{\bullet} \ar@{-}[ul]^{a_5} \ar@{-}[ur]_{b_4} &&&&&\\
&&*=0{\bullet}\ar@{-}[u]^{c_5} &&&&& \\
&&*=0{}\ar@{-}[u]^{c_6} &&&&&
}
\end{equation}
by $(a_0,a_1,a_5,b_2,b_3,b_4,c_5,c_6)$.

{
We denote 
\begin{itemize}
\item by $\pi_i$ the unique dendrex of $\Omega[T_n]$ represented by a subtree with edges $b_n, c_n$ and $a_j$ for all $j\neq i$, for $i=0,...,n-1$;
\item by  $\pi_n$ the unique dendrex represented by $(a_0,...,a_{n-1}, b_{n-1}, c_{n-1})$ of $\Omega[T_{n-1}]$;
\item by $\alpha_n$ the unique dendrex represented by $(a_0,...,a_{n-1}, b_{n-1}, b_n, c_n)$ of $\Omega[T_n]$;
\item by $\sigma_j \alpha_n$ the degeneracy of $\alpha$ with respect to $a_j$, for $j=0,1,...,n-1$;
\item by $\beta_n$ the unique dendrex represented by $(a_0,...,a_{n-1},b_{n-1},c_{n-1},c_n)$ of $\Omega[T_{n-1}]$;
\item by $\gamma_n$ the unique dendrex represented by $(a_0,...,a_{n},b_{n},c_n)$  of $\Omega[T_n]$.
\end{itemize}
}

We denote the edges of the binary extended corolla as in the following picture: 
\begin{equation*}
\xymatrix@R=10pt@C=12pt{
&&&&&&&\\
& *=0{\bullet} \ar@{-}[u]^{a_0} && \\
& *=0{\bullet} \ar@{-}[u]^{a_1} && \\
EC_{n,1} = \qquad & \dots \ar@{-}[u] && \\
& *=0{\bullet} \ar@{-}[u]^{a_{n-1}} && \\
&&*=0{\,\, \bullet_u} \ar@{-}[ur]_{b}\ar@{-}[ul]^{a_n}&\\
&&*=0{}\ar@{-}[u]^c &
}
\end{equation*}

The colours of the tensor product $\Omega[EC_{n,1}] \otimes \Omega[L_1]$  will be denoted 
by $a_0,...,a_n$,$b$,$c$, $a'_0,...,a'_n$,$b'$,$c'$ and the operations are denoted accordingly. There are $n+1$ shuffles $E_0, E_1,..., E_n$ where $E_i$ is the unique shuffle that has $a_i$ and $a'_i$ for $i=0,...,n$ and one more shuffle $F$ which has $c$ and $c'$. For example we have the following shuffles
\begin{equation*}
\xymatrix@R=10pt@C=12pt{
&&&&&&&\\
& *=0{\bullet} \ar@{-}[u]^{a_0} && \\
& *=0{\bullet} \ar@{-}[u]^{a'_0} && \\
& *=0{\bullet} \ar@{-}[u]^{a'_1} && \\
E_0= \qquad & \dots \ar@{-}[u] && \\
& *=0{\bullet} \ar@{-}[u]^{a'_{n-1}} && *=0{\bullet} \ar@{-}[u]_{b} \\
&&*=0{\,\, \bullet_{u'}} \ar@{-}[ur]_{b'}\ar@{-}[ul]^{a'_n}&\\
&&*=0{}\ar@{-}[u]^{c'} & 
}
\quad
\xymatrix@R=10pt@C=12pt{
&&&&&&&\\
& *=0{\bullet} \ar@{-}[u]^{a_0} && \\
& *=0{\bullet} \ar@{-}[u]^{a_1} && \\
& \dots \ar@{-}[u] && \\
F= \qquad & *=0{\bullet} \ar@{-}[u]^{a_{n-1}} &&  \\
&&*=0{\,\, \bullet_{u'}} \ar@{-}[ur]_{b}\ar@{-}[ul]^{a_n}&\\
&&*=0{\bullet}\ar@{-}[u]^{c} & \\
&&*=0{}\ar@{-}[u]^{c'} &
}
\end{equation*}

\begin{lemma}\label{partIlemma1}
A stably fibrant dendroidal set $D$ admits lifts against the root horn inclusion $i \colon \Lambda^u [EC_{n,1}] \to \Omega[EC_{n,1}]$ of the binary extended corolla.
\end{lemma}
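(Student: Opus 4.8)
The plan is to first repackage stable fibrancy as a lifting property against an explicit weakly saturated class, and then to reduce the root--horn filling for $EC_{n,1}$ to a combinatorial statement about the shuffles $E_0,\dots,E_n,F$ of $\Omega[EC_{n,1}]\otimes\Omega[L_1]$. Since $D$ is stably fibrant it is a dendroidal Kan complex, hence has the right lifting property against every left anodyne map; moreover, because the covariant model structure is simplicial and $D$ is local with respect to $s$, the map $s^{*}\colon\sHom(\Omega[C_2],D)\to\sHom(\eta_a\sqcup\eta_c,D)$ is a Kan fibration which is a weak equivalence, hence a trivial fibration. By the pushout--product adjunction this is equivalent to $D$ having the right lifting property against each of the maps
$$\big(\Lambda^b[C_2]\otimes\Omega[L_m]\big)\cup\big(\Omega[C_2]\otimes\partial\Omega[L_m]\big)\longrightarrow\Omega[C_2]\otimes\Omega[L_m],\qquad m\ge 0 .$$
Let $\mathcal{S}$ denote the weakly saturated class generated by all left anodyne maps together with these pushout--product maps; then $D$ lifts against every map in $\mathcal{S}$, so it suffices to prove that the root horn inclusion $\Lambda^{u}[EC_{n,1}]\to\Omega[EC_{n,1}]$ belongs to $\mathcal{S}$. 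For $n=0$ one has $EC_{0,1}=C_2$ and this inclusion is the map $s$ (up to the symmetry interchanging the two leaves), which is a generator of $\mathcal{S}$; this also serves as the base case of an induction on $n$.

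For the inductive step I would use the tensor product $\Omega[EC_{n,1}]\otimes\Omega[L_1]$ together with its decomposition into the shuffles $E_0,\dots,E_n,F$ introduced above. The idea is to compare the root--horn inclusion with the localizing map $s$ by filtering $\Omega[EC_{n,1}]\otimes\Omega[L_1]$, adjoining to $\Lambda^{u}[EC_{n,1}]\otimes\Omega[L_1]$ the shuffles $E_0,\dots,E_n$ and then $F$ one at a time; each such step is a pushout along a normal monomorphism into $\Omega[E_i]$ (respectively $\Omega[F]$), and the heart of the matter is to identify the attaching subobject in each case as a single horn. For all of these shuffles but one the attaching horn should be an inner horn or a non--root outer horn, hence left anodyne; for the one remaining ``critical'' shuffle --- which is $F$ --- the attaching map, after contracting the branch labelled by the $b$--edges of the relevant subtree, should be either the root horn of the smaller binary extended corolla $EC_{n-1,1}$ (which lies in $\mathcal{S}$ by the induction hypothesis) or directly one of the generating pushout--product maps displayed above. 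Having realised this filtration in terms of maps of $\mathcal{S}$, a standard cylinder argument finishes the proof: precompose a lifting problem for $\Lambda^{u}[EC_{n,1}]\to\Omega[EC_{n,1}]$ with the projection $\Omega[L_1]\to\eta$, solve it over $\Omega[EC_{n,1}]\otimes\Omega[L_1]$ using the above, and restrict the resulting extension to the endpoint $\Omega[EC_{n,1}]\otimes\{1\}\cong\Omega[EC_{n,1}]$, which restricts to the originally given map along $\Lambda^{u}[EC_{n,1}]$; here one also uses that the other endpoint inclusion $\Omega[EC_{n,1}]\otimes\{0\}\hookrightarrow\Omega[EC_{n,1}]\otimes\Omega[L_1]$ is left anodyne.

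The main obstacle is precisely this combinatorial bookkeeping. For each shuffle $E_i$ and for $F$ one has to describe explicitly which dendrices already lie in the part of $\Omega[EC_{n,1}]\otimes\Omega[L_1]$ adjoined so far, recognise the complementary attaching subobject as exactly one horn, and verify that precisely one of these horns is a root horn --- the step handled through $s$--locality or the inductive hypothesis --- while all the others are inner or non--root outer. Getting the ordering of the shuffles right, keeping careful track of the new faces created by the grafting that produces $EC_{n,1}$ from $C_2$ and $L_n$ (in particular the face $\partial_u$), and disposing of the degenerate small cases $n\le 1$ separately is where essentially all of the work lies; the rest is formal manipulation of saturated classes and of the simplicial enrichment, together with the elementary observations that the covariant model structure is simplicial and that localizing at $s$ turns $s^{*}$ into a trivial fibration.
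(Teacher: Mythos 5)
Your opening reductions are correct and agree with the paper's: stable fibrancy gives the right lifting property against all left anodynes and against the pushout--product maps $\bigl(\Lambda^b[C_2]\otimes\Omega[L_m]\bigr)\cup\bigl(\Omega[C_2]\otimes\partial\Omega[L_m]\bigr)\to\Omega[C_2]\otimes\Omega[L_m]$, and the case $n=0$ is the generator $s$ itself. Your strategy is then to show that $\Lambda^u[EC_{n,1}]\to\Omega[EC_{n,1}]$ actually lies in the weakly saturated class $\mathcal{S}$ generated by these maps. That statement is true \emph{a posteriori} (once Theorem \ref{THMFIBRANT} is proved, both generating sets have the same injectives, so their saturations coincide), but it is strictly stronger than the lemma and your proposed combinatorial proof of it does not go through.

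Two things break. First, adjoining a shuffle of $\Omega[EC_{n,1}]\otimes\Omega[L_1]$ is not a single horn attachment: the intersection of $\Omega[E_0]$ with $\Lambda^u[EC_{n,1}]\otimes\Omega[L_1]$ contains the inner faces $\partial_{a_i'}E_0$ for $i\geq 1$ but misses $\partial_{a_0'}E_0$, $\partial_{b'}E_0$ and both top faces of $E_0$, so a genuine sub-filtration by subtrees (as in Lemma \ref{partIlemma2}) is unavoidable; this part is repairable. The unfixable point is the critical cell carrying the colour $c$. Once all non-root faces and all the $E_i$ are present, the \emph{entire} boundary of the dendrex $EC_{n,1}\otimes\{0\}$ is already there (its root face, the linear tree on $a_0,\dots,a_n$, sits inside $E_n$, and $\partial_c F=\partial_{a_n'}\partial_{b'}E_n$ as well), so the only ways to create it are the boundary inclusion $\partial\Omega[EC_{n,1}]\to\Omega[EC_{n,1}]$ or the root horn of $F$ at its \emph{unary} root vertex --- neither is a generator of $\mathcal{S}$, neither is covered by your induction on $n$, and there is no monomorphism $\Omega[C_2]\otimes\Omega[L_m]\hookrightarrow\Omega[EC_{n,1}]\otimes\Omega[L_1]$ along which a generating pushout--product could be attached. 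Your phrase ``after contracting the branch labelled by the $b$-edges'' is exactly a non-injective attaching map, and a pushout along it is no longer a subobject of $\Omega[EC_{n,1}]\otimes\Omega[L_1]$, so your filtration leaves the ambient object. This non-injectivity is precisely the idea the paper exploits: it pushes the generating map out along the collapse $a_i\mapsto a_i$, $b_i\mapsto b$, $c_i\mapsto c$ to an auxiliary object $P$ that is \emph{not} a subobject of anything, obtains a lift $g\colon P\to D$, and only then returns to $\Omega[EC_{n,1}]$ by factoring $\Lambda^u[EC_{n,1}]\to P$ through the cylinder $\bigl(\Lambda^u[EC_{n,1}]\otimes\Omega[L_1]\bigr)\cup\bigl(\Omega[EC_{n,1}]\otimes\{0\}\bigr)$, where the $\{0\}$-end is filled using $g$ rather than circularly. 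Without that step your concluding cylinder argument is also circular, since extending $f\circ\mathrm{pr}$ over $\Omega[EC_{n,1}]\otimes\{0\}$ is the original lifting problem.
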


\begin{proof}
Let $D$ be a stably fibrant dendroidal set. By definition $D$ is a dendroidal Kan complex and admits lifts against the maps
\begin{equation*}
\Big( \Lambda^b [C_2] \otimes \Omega[L_n]\Big) ~ \cup ~ \Big( \Omega[C_2] \otimes \partial \Omega[L_n] \Big) \longrightarrow \Omega[C_2] \otimes \Omega[L_n]
\end{equation*}
for all $n \geq 0$.
{Note that the inclusion $\Lambda^a [C_2]\to \Omega[C_2]$ is isomorphic to the inclusion $\Lambda^b[C_2] \to \Omega[C_2]$. Hence $D$ also admits {lifts} against the maps
\begin{equation*}
\Big( \Lambda^a [C_2] \otimes \Omega[L_n]\Big) ~ \cup ~ \Big( \Omega[C_2] \otimes \partial \Omega[L_n] \Big) \longrightarrow \Omega[C_2] \otimes \Omega[L_n]
\end{equation*}
for all $n \geq 0$.
}
Consider the following pushout square 
\begin{equation*}
\xymatrix{\Big( \Lambda^a [C_2] \otimes \Omega[L_n]\Big) ~ \cup ~ \Big( \Omega[C_2] \otimes \partial \Omega[L_n] \Big)  \ar[r] \ar[d] & \Lambda^u[EC_{n,1}] \ar[d]^k \\ \Omega[C_2] \otimes \Omega[L_n]\ar[r]^l & P}
\end{equation*}
where the left vertical map is {the} inclusion and the top horizontal map is the unique map which maps $a_i$ to $a_i$, $b_i$ to $b$ and $c_i$ to $c$ for $i=0,1,...,n$. It follows that $D$ also admits a lift against the map $u \colon \Lambda^u [EC_{n,1}] \to P$. 
{
We can factor $k$ as a composition $k=pj$ of the inclusion 
\begin{equation*}
j \colon \Lambda^u EC_{n,1} \cong \Lambda^u EC_{n,1} \otimes\{1\} \to \Big(\Lambda^u EC_{n,1}\otimes \Omega[L_1] \Big) \cup \Big( EC_{n,1}\otimes\{0\} \Big)
\end{equation*}
and the map 
\begin{equation*}
p \colon \Big(\Lambda^u EC_{n,1}\otimes \Omega[L_1] \Big) \cup \Big( EC_{n,1}\otimes\{0\} \Big) \to P
\end{equation*} 
which we now describe explicitly. 
}

{ The colours of $P$ can be identified with $a_0,...,a_n,b$ and $c$.
The map $p$ is determined by the image of $EC_{n,1}\otimes\{0\} $ and compatibly chosen images of all the shuffles of $\Lambda^u EC_{n,1}\otimes \Omega[L_1]$, i.e. of $\partial_{a_i}F, i=0,1...,n$,  $\partial_{a_i}E_j, i=0,...,j$ and $\partial_{a'_i}E_j, i=j,...,n$ for all $j=0,1,...,n$. }

{
Concretely, we send 
\begin{itemize}
\item $EC_{n,1}\otimes \{0\}$ to $l(\gamma_n)$, 
\item $\partial_{a_n}F$ to $l(\beta_n)$, 
\item $\partial_{a'_n}E_j$ to $l(\sigma_j\alpha_n)$ for $j=0,1,...,n-1$, 
\item and all other shuffles to the corresponding degeneracy of $\pi_i$.  
\end{itemize}
One can easily verify that these conditions are compatible in $P$ and hence $p$ is well-defined.   
}
{
Now we can prove the statement of the lemma. So let us assume a map $f \colon \Lambda^u EC_{n,1} \to D$ is given. We want to prove that there is a lift $\bar{f} \colon EC_{n,1} \to D$ such that $f = \bar{f}  i$.  
}
{
By the above considerations we know that $D$ admits a lift $g : P \to  D$ such that  $f  = gk $ and hence $f$ factors also through $\Big(\Lambda^u EC_{n,1}\otimes \Omega[L_1] \Big) \cup \Big( EC_{n,1}\otimes\{0\} \Big)$ as a composition of $j$ and $gp$. We get the following commutative diagram 
\begin{equation*}
\xymatrix{\Lambda^u EC_{n,1} \otimes\{1\}  \ar[rr] \ar[d]_i &&  \Big(\Lambda^u EC_{n,1}\otimes \Omega[L_1] \Big) \cup \Big( EC_{n,1}\otimes\{0\} \Big) \ar[r] \ar[d]  & D \\ EC_{n,1}\otimes \{1\} \ar[rr] && EC_{n,1}\otimes \Omega[L_1]  &}
\end{equation*}
where the top horizontal maps are $j$ and $gp$ respectively and all other maps are obvious inclusions. 
Since $D$ is a dendroidal Kan complex it admits a lift against left anodynes and the right vertical inclusion $\Big(\Lambda^u EC_{n,1}\otimes \Omega[L_1] \Big) \cup \Big( EC_{n,1}\otimes\{0\} \Big) \to EC_{n,1}\otimes \Omega[L_1]$ is left anodyne because the covariant model structure is simplicial. Hence there is a lift $EC_{n,1}\otimes \Omega[L_1] \to D$ which, when precomposed with the inclusion $EC_{n,1}\otimes \{1\} \to EC_{n,1}\otimes \Omega[L_1]$, gives the desired lift $\bar{f}$. This finishes the proof. 
}
\end{proof}

\begin{lemma}\label{partIlemma2}
The pushout product of the map $s:\Lambda^b [C_2] \to \Omega[C_2]$ with a simplex boundary inclusion
\begin{equation*}
\Big( \Lambda^b [C_2] \otimes \Omega[L_n]\Big) ~ \cup ~ \Big( \Omega[C_2] \otimes \partial \Omega[L_n] \Big) \longrightarrow \Omega[C_2] \otimes \Omega[L_n]
\end{equation*}
is a binary extended left anodyne map.
\end{lemma}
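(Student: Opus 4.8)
The plan is to exhibit the inclusion
$$
\iota\colon A:=\bigl(\Lambda^b[C_2]\otimes\Omega[L_n]\bigr)\cup\bigl(\Omega[C_2]\otimes\partial\Omega[L_n]\bigr)\longrightarrow \Omega[C_2]\otimes\Omega[L_n]
$$
as a finite composite of pushouts of generators of the class of binary extended left anodynes, i.e.\ of non-root horns of arbitrary trees and of root horns of binary extended corollas $EC_{m,1}$ with $m\le n$. This is the standard shuffle-filtration technique. Recall that the nondegenerate dendrices of maximal dimension in $\Omega[C_2]\otimes\Omega[L_n]$ are precisely the shuffles $T_0,\dots,T_n$, and every dendrex of the tensor product is a face of a degeneracy of one of them. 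Choosing a suitable order on the shuffles (e.g.\ $T_0,T_1,\dots,T_n$) one builds a filtration $A=A_{-1}\subset A_0\subset\dots\subset A_n=\Omega[C_2]\otimes\Omega[L_n]$, where $A_k$ is obtained from $A_{k-1}$ by adjoining $\Omega[T_k]$ together with all faces of $T_k$ not already present; each step $A_{k-1}\subset A_k$ is itself resolved by a secondary filtration in which one adjoins the faces $D_iT_k$ and the special dendrices $\pi_i,\alpha_n,\beta_n,\gamma_n$ (and their degeneracies) one at a time. This is exactly why that notation was fixed before the lemma.

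\textbf{Main steps.} For each elementary extension one names the unique tree $S$ representing the dendrex being adjoined, determines which faces of $\Omega[S]$ already lie in the current stage, and checks that the missing ones assemble into a single horn $\Lambda^x[S]$. There are three regimes. (a) Whenever an inner edge in the $L_n$-direction ($c_i$) or in a partially built chain ($a_i$ or $b_i$) is still available, the step is a pushout along an \emph{inner} horn; this handles the bulk of the dendrices. (b) Near the leaves of a shuffle the missing faces form a \emph{non-root outer} (leaf-type) horn, which is already a generator of the class in question. (c) The essential regime is the root: because we localized at the root horn $\Lambda^b[C_2]$ — omitting precisely the $b$-face of $C_2$ rather than the leaf face or the whole boundary — the faces missing near the root of $T_n$ and of the faces of $T_n$ obtained by chopping off an initial segment of the $a$-chain assemble exactly into the root horn of a binary extended corolla $EC_{m,1}$, where $m$ is the length of the $a$-chain remaining above the $C_2$-vertex; the dual root horns of $C_{n,k}$-type faces are handled the same way. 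The point is that no root horn of a tree outside the family $\{EC_{m,1}\}$ ever occurs, because the $c$-chain below the $C_2$-vertex is always already filled (by inner-horn steps) by the time one reaches a root-type step. Combining the three regimes shows $\iota$ is binary extended left anodyne. The base case $n=0$ is the tautology that, since $\partial\Omega[L_0]=\emptyset$, the map $\iota$ is $s\colon\Lambda^b[C_2]\to\Omega[C_2]$ itself, which is the root horn inclusion $\Lambda^u[EC_{0,1}]\to\Omega[EC_{0,1}]$ of $EC_{0,1}=C_2$.

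\textbf{Main obstacle.} The hard part is precisely this combinatorial bookkeeping: keeping track, shuffle by shuffle and face by face, of which dendrices are already present, which are degenerate, and of the precise omitted face of each horn that arises — and, above all, proving that every outer-root horn that occurs is a root horn of some $EC_{m,1}$ and never of a more complicated tree (in particular never of $EC_{m,k}$ with $k\ge 2$, nor a general root horn). That the $D_iT_j$, $\pi_i$, $\alpha_n$, $\beta_n$, $\gamma_n$ can be organized so that the filtration is exhaustive and involves only admissible horns is the technically demanding step; conversely it is the structural reason that binary extended left anodynes are generated by exactly the root horns of the $EC_{m,1}$ (the ``$2$'' of $C_2$ forces the single extra input of $EC_{m,1}$, and the $a$-chains of the shuffles account for the stacked unary vertices).
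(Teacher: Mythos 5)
Your strategy is the same as the paper's: filter $\Omega[C_2]\otimes\Omega[L_n]$ by the shuffles $T_0,\dots,T_n$, refine each stage by adjoining representables of subtrees ordered by size, and check that every elementary step is a pushout of an inner horn, a leaf horn, or a root horn of some $EC_{m,1}$. The base case $n=0$ is also handled correctly. But as written the proposal is a plan, not a proof: the entire mathematical content of this lemma \emph{is} the bookkeeping you defer. Nowhere do you verify, for a single subtree $U$ adjoined at a given stage, which of its faces already lie in the previous stage and that the missing ones form exactly one admissible horn; nor do you specify the secondary filtrations (in the paper these are the $B^k_l$ for $0<k<n$ and the chains $P_0\subset\cdots\subset P_{2n+1}$ for $k=n$, indexed by subtrees $(a_{j_1},\dots,a_{j_q},b_{i_1},\dots,b_{i_p},c_k,\dots,c_n)$ with prescribed index sets). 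Asserting that "the missing faces assemble into a single horn" is precisely the claim that needs proof at each of the many steps, and it genuinely fails for naive orderings, which is why the paper's ordering is so carefully chosen.

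Two concrete inaccuracies signal that the sketch has not engaged with the actual combinatorics. First, the dendrices $\pi_i,\alpha_n,\beta_n,\gamma_n$ and their degeneracies are not used in this lemma at all; in the paper they serve to define the retraction $p$ in the companion lemma showing that stably fibrant objects lift against $\Lambda^u[EC_{n,1}]\to\Omega[EC_{n,1}]$. Citing them as the organizing data of this filtration conflates the two arguments. Second, your description of regime (c) is off: in the paper exactly one root-type step occurs, namely the pushout of $\Lambda^v[W]\to\Omega[W]$ for the tree $W=(a_0,b_0,\dots,b_n,c_n)$, whose unary chain sits over the $b$-edge (the edge omitted by the horn $\Lambda^b[C_2]$), and it arises only when adjoining the last shuffle $T_n$ -- the one whose binary vertex sits directly over the root, so that no inner edge $c_k$ is available. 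For all $k<n$ the steps are inner anodyne (horns at $c_k$), and the remaining work at $k=n$ is a further chain of left anodyne and inner anodyne extensions, not a family of "dual root horns of $C_{n,k}$-type faces." Without exhibiting this structure explicitly, the claim that only root horns of binary extended corollas occur is unsupported.
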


\begin{proof}
The case $n=0$ is just the case of the inclusion $\Lambda^b [C_2] \to \Omega[C_2]$.
\medskip

Fix $n\geq 1$. We set $A_0:=\Lambda^b [C_2]\otimes \Omega[L_n] \coprod_{\Lambda^b[C_2]\otimes \partial \Omega[L_n]} \Omega[C_2]\otimes \partial \Omega[L_n]$.
Note that $A_0$ is the union of all $\Omega[D_iT_j]$ and of the chains $\eta_a\otimes \Omega[L_n] $ and $\eta_c\otimes \Omega[L_n]$.
We define dendroidal sets $A_{k}=A_{k-1}\cup \Omega[T_{k-1}]$ for $k=1,...,n+1$. So we have decomposed the map from the lemma into a composition of inclusions
\begin{equation*}
A_0 \subset A_1 \subset ... \subset A_{n-1} \subset A_n \subset A_{n+1}.
\end{equation*}
We will show that $A_k\to A_{k+1}$ is inner anodyne for $k=0,...,n-1$ and binary extended left anodyne for $k=n$.
Note that $A_{n+1}=\Omega[C_2]\otimes \Omega[L_n]$, so the inclusion $A_0\to \Omega[C_2]\otimes \Omega[L_n]$ is binary extended left anodyne as a composition of such maps.

\medskip

{\textbf{Case} $k=0$. }
The faces $\partial_{c_i}\Omega[T_0]$ {of $T_0$} are equal to $\Omega[D_iT_0]$ for all $i>0$. The outer leaf face of $T_0$ is equal to $\eta_c\otimes \Omega[L_n]$.
The remaining face $\partial_{c_0}\Omega[T_0]$ is in $A_1$, but not in $A_0$
so we have a pushout diagram
$$\xymatrix{\Lambda^{c_0}[T_0] \ar[r] \ar[d] & A_0 \ar[d] \\ \Omega[T_0] \ar[r] & A_{1}.}$$
Since inner anodyne extensions are closed under pushouts it follows that $A_0\to A_1$ is inner anodyne.

\medskip

\textbf{Case} $0<k<n$.
We now construct a further filtration
$$A_k=B^k_0 \subset B^k_1 \subset \cdots \subset B^k_{k+2}=A_{k+1}$$
as follows. Informally speaking, we add representables of subtrees of $T_k$ by the number of
vertices starting from the minimal ones which are not contained in {$A_{k}$}.
More precisely, set $B^k_0:=A_k$ and for $l=1,...,k+2$ let $B^k_l$ be the union of $B^k_{l-1}$ and all the representables of trees
$(a_{j_1},..., a_{j_q}, b_{i_1},..., b_{i_p}, c_k,..., c_n)$ with $q+p=l+k$ and $\{j_1, ..., j_q, i_1,..., i_p\}=\{0,1,...,k\}$.
{An an example of such a tree for $k=5, l=1, p=q=3$ and $n=6$ is given page 15. }

{For} $p+q=k+1$ and the tree $U= (a_{j_1},..., a_{j_q}, b_{i_1},..., b_{i_p}, c_k,...,c_n)$ we have an inclusion $\Lambda^{c_k}[U]\subset A_0=B_0^k$ because
$\partial_{c_i} \Omega[U] \subset \Omega[D_iT_k]$ for $i>k$,  $\partial_{a_j} \Omega[U]\subset \Omega[D_jT_k]$ for $j\in\{j_1,...,j_q\}$ and
$\partial_{b_i} \Omega[U]\subset \Omega[D_iT_k]$ for $i\in\{i_1,...,i_p\}$.  Also note that $\partial_{c_k}\Omega[U]$ is not contained in $A_0$.

For $p+q=k+l, l\geq 2$ and the tree $U=(a_{j_1},..., a_{j_q}, b_{i_1},..., b_{i_p}, c_k,..., c_n)$ we have an inclusion $\Lambda^{c_k}[U]\subset B_{l-1}^k$.
Indeed, for $j\in \{j_1, ..., j_q\}$, $\partial_{a_j}\Omega[U]\subset B^k_{l-1}$ by definition if $j\in\{i_1,...,i_p\}$ and $\partial_{a_j}\Omega[U]\subset A_{k-1}\subset B^k_{l-1}$ if $j\not\in\{i_1,...,i_p\}$.
Similarly,  $\partial_{b_i}\Omega[U] \subset B^k_{l-1}$ for $i\in\{i_1,...,i_p\}$ and $\partial_{c_i}\Omega[U]\subset \Omega[D_iT_k] \subset A_0$ for $i>k$.
The remaining face $\partial_{c_k}\Omega[U]$ is not contained $B^k_{l-1}$.

We conclude that the map $B^k_{l-1}\to B^k_l$ is inner anodyne for $l=1,..., k+2$ because it is {the pushout} of the inner anodyne map
$$\coprod_{q+p=k+l} \Lambda^{c_k}[U] \to \coprod_{q+p=k+l} \Omega[U]$$ where the coproduct is taken over all subtrees
$U=(a_{j_1}, a_{j_2},..., a_{j_q}, b_{i_1},..., b_{i_p}, c_k,..., c_n)$ of $T_k$ such that $q+p=k+l$ and $\{j_1, ..., j_q, i_1,..., i_p\}=\{0,1,...,k\}$.

\medskip

\textbf{Case} $k=n$. 
Note that  faces of the shuffle $T_n$ are
\begin{itemize}
\item $\partial_{b_i}T_n=(a_0,...,a_n, b_0,...,\widehat{b_i},...,b_n, c_n)$, $i=0,...,n$;
\item $\partial_{a_j}T_n=(a_0,...,\widehat{a_j},...,a_n, b_0,..., b_n, c_n)$, $j=0,...,n$.
\end{itemize}
{Our strategy goes as follows. First, we form} the union of $A_{n-1}$ with all $\partial_{b_i}\Omega[T_n], i=0,...n-1$. {Second, we} consider the union with all proper subsets of $\partial_{b_n}\Omega[T_n]$ that
contain edges $a_0$ and $a_n$. {Third, we consider} the union with $\partial_{a_j}\Omega[T_n], j=1,...n$ and then with $\partial_{a_0}\Omega[T_n]$. In the last step we use the horn inclusion
$\Lambda^{b_n}[T_n]\subset \Omega[T_n]$.
Thus we start with a filtration
\begin{equation*}
A_n=P_0 \subset \cdots \subset P_{p-1} \subset P_p\subset \cdots \subset P_{n},
\end{equation*}
{where $P_p$ is the union of $P_{p-1}$ with the representables of the trees of the form $(a_0,...,a_n,$ $b_{i_1},...,b_{i_p}, c_n)$ for $p=1,...,n-1$}.
Also, we define $P_{n}$ as the union of $P_{n-1}$ with $\partial_{b_i}\Omega[T_n]$ {for all $i=1,2,...,n-1$ (but not for $i=n$)}.
{Let us} show that the maps $P_{p-1}\to P_p$ are left anodyne for $p=1,2,...,n$.

\begin{itemize}
\item \textbf{Case} $p=1$.
For $i\in\{0,1,...,n\}$ and $V_i=(a_0,...,a_n, b_i, c_n)$ all the faces of $\Omega[V_i]$, except $\partial_{a_i}\Omega[V_i]$, are in $P_0=A_n$. The map $P_0 \to P_1$ is left anodyne as a pushout of the map $\coprod_{i=0}^n \Lambda^{a_i}[V_i]\to \coprod_{i=0}^n \Omega[V_i]$.
$$  \xymatrix@R=10pt@C=12pt{
&&&&&&& \\
& *=0{\bullet} \ar@{-}[u]^{a_0} &&&&&& \\
V_i= & \dots \ar@{-}[u]^{a_1} &&&&&& \\
& *=0{\bullet} \ar@{-}[u]^{a_{n-1}} &&&&&& \\
&&*=0{\bullet} \ar@{-}[ul]^{a_n} \ar@{-}[ur]_{b_i} &&&&&\\
&&*=0{}\ar@{-}[u]^{c_n} &&&&& \\
}
$$
\item \textbf{Case} $p\leq n-1$.
We give a further filtration $$P_{p-1}=Q^p_0  \subset Q^p_1 \subset \cdots\subset Q^p_m \subset \cdots \subset Q^p_p=P_p$$

Let $Q^p_m$ be the union of $Q^p_{m-1}$ with $\Omega[U]$ for all the trees of the form $$U=(a_{j_1},...,a_{j_q}, b_{i_1},...,b_{i_p},c_n), ~ q+p=n+m$$ such that there is a subset $I\subseteq \{i_1,...,i_{p-1}\}$ with $\{j_1,...,j_q\}=\{0,1,...,n\}~\setminus~I$. Note that $i_p\in \{j_1,...,j_q\}$. We show that the inclusions $Q^p_{m-1}\to Q^p_m$ are left anodyne for all $m=1,2,...,p-1$.
For a fixed $m$ and such a tree $U$ the faces of $\Omega[U]$ are all in $Q^p_{m-1}$ except for $\partial_{a_{i_p}}\Omega[U]$.
More precisely, the faces $\partial_{b_i}\Omega[U]$ are all in $P_{p-1}$, the faces $\partial_{a_j}\Omega[U]$ are in $A_0$ if $j\not\in \{i_1,...,i_p\}$ and
in $Q^p_{m-1}$ by definition if $j\in \{i_1,...,i_p\}$.

We conclude that $Q_{m-1}^p\to Q^p_m$ is left anodyne as a pushout of the left anodyne map $\coprod_{} \Lambda^{a_{i_p}}[U]\to \coprod_{} \Omega[U]$, where the coproduct is taken over trees $U$ described above.
We have $P_p=Q^p_p$, so $P_{p-1}\to P_p$ is also left anodyne.

\item \textbf{Case} $p=n$.
Here we do a slight modification of the previous argument.
Let $Q^n_0:=P_{n-1}$ and for $m=1,...,n-1$ let $Q^{n}_m$ be the union of $Q^n_{m-1}$ with $\Omega[U_i]$ for the trees of the form
$$U_i=(a_{i_1},...,a_{i_m}, a_n, b_0,...,\hat{b_i},...,b_n, c_n), i\neq n$$ or of the form  $$U_n=(a_0, a_{i_1},..., a_{i_{m-1}}, a_n,b_0,...,b_{n-1},c_n).$$

Let $Q^n_n$ be the union of $Q^n_{n-1}$ with $\partial_{b_i}\Omega[T_n]$ {for all $i=1,2,...,n-1$ (but not for $i=n$)}.

A similar argument to the one given above (using horns $\Lambda^{a_n}[U_i], i\neq n$ and $\Lambda^{a_0}[U_n]$) shows that the maps $Q^n_{m-1}\to Q^n_m$ are left anodyne for all $m=1,...,n$.
Since $P_n=Q_n^n$ we have proven that $P_{n-1}\to P_n$ is left anodyne and hence $A_n\to P_n$ is left anodyne.

\end{itemize}

Next, we add $\partial_{a_i}\Omega[T_n]$ for $i=1,2,...,n$ to the union.
Let us denote the only binary vertex of the tree $W=(a_0,b_0,...,b_n,c_n)$ by $v$.
Let $P_{n+1}= P_{n}\cup \Omega[W]$.  Then the map $P_n\to P_{n+1}$ is \emph{binary extended left anodyne} because it is a pushout of the map
$ \Lambda^{v}[W] \to \Omega[W]$.

\begin{equation*}  
\xymatrix@R=10pt@C=12pt{
&&&&&&& \\
&  && *=0{\bullet}\ar@{-}[u]_{b_0} &&&& \\
W= &  && \dots \ar@{-}[u]_{b_1} &&&& \\
&  && *=0{\bullet}\ar@{-}[u]_{b_{n-1}} &&&& \\
&&*=0{\,\,\bullet_v} \ar@{-}[ul]^{a_0} \ar@{-}[ur]_{b_n} &&&&&\\
&&*=0{}\ar@{-}[u]^{c_n} &&&&&
}
\end{equation*}

For each $q=2,...,n$ we define $P_{n+q}$ as the union of $P_{n+q-1}$ and the 
representables of the trees of the form $Z_q=(a_0, 
a_{i_1},...,a_{i_q},b_0,...,b_n,c_n)$. The {inclusion} $P_{n+q-1}\to P_{n+q}$ is left 
anodyne as the pushout of $\coprod \Lambda^{a_0}[Z_q]\to \coprod_{} 
\Omega[Z_q]$.

The dendroidal set $P_{2n}$ contains $\partial_{a_i}\Omega[T_n], i=1,...,n$. Furthermore, {all} faces of $\partial_{a_0}\Omega[T_n]$ except for $\partial_{b_n}\partial_{a_0}\Omega[T_n]$ are in $P_{2n}$. Let $P_{2n+1}=P_{2n}\cup \partial_{a_0}\Omega[T_n]$. Then $P_{2n}\to P_{2n+1}$ is inner anodyne as the pushout of $\Lambda^{b_n}\partial_{a_0}[T_n]\to \partial_{a_0}\Omega[T_n]$.
From this we conclude that $A_n\to P_{2n+1}$ is binary extended left anodyne. All the faces of $\Omega[T_n]$ {except $\partial_{b_n}\Omega[T_n]$} are in $P_{2n+1}$, so $P_{2n+1}\to A_{n+1}$ is left anodyne as the pushout of the map $\Lambda^{b_n}[T_n]\to \Omega[T_n]$. Hence $A_n\to A_{n+1}$ is binary extended left anodyne, which finishes the proof.

\end{proof}

\section{Proof of Theorem \ref{THMFIBRANT}, part II}\label{partII}

{In this section we compare lifts against binary extended corollas and all extended corollas.}
\begin{proposition}\label{PROP2}
Let $D \in \dSet$ be a dendroidal Kan complex. Then $D$ admits fillers for all root horns of binary extended corollas $EC_{n,1}$ if and only if $D$ admits fillers for all root horns of arbitrary extended corollas $EC_{n,k}$.
\end{proposition}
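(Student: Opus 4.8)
For ``$\Leftarrow$'' there is nothing to do: the binary extended corollas $EC_{n,1}$ occur among the extended corollas (they are the case $k=1$), so if $D$ fills all root horns $\Lambda^{u}[EC_{n,k}]$ it in particular fills all $\Lambda^{u}[EC_{n,1}]$; this direction does not even use that $D$ is a dendroidal Kan complex.

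For ``$\Rightarrow$'' the plan is to induct on $k$, the base case $k=1$ being exactly the hypothesis. Fix $k\ge 2$. By the inductive hypothesis $D$ fills the root horns of all $EC_{m,j}$ with $1\le j<k$, and since $D$ is moreover a dendroidal Kan complex that fills the root horns of all binary extended corollas, it has the right lifting property against every \emph{binary extended left anodyne} map (Definition~\ref{binextleft}): the generators of that class — non-root horns of arbitrary trees and root horns of binary extended corollas — are lifted against by assumption, and the right lifting property is stable under the weak-saturation operations. Given $f\colon\Lambda^{u}[EC_{n,k}]\to D$, I would produce the filler by passing through an auxiliary tree $P$ obtained from $EC_{n,k}$ by \emph{partially} resolving its root vertex $u$ — which has the inner edge $a_{n}$ (carrying the linear part $L_{n}$) and the leaves $b_{1},\dots,b_{k}$ as inputs and $c$ as output — replacing $u$ by a vertex $u_{1}$ with inputs $a_{n},b_{1},\dots,b_{k-1}$ and a fresh output edge $d$, followed by a binary vertex $u_{2}$ with inputs $d,b_{k}$ and output $c$. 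One checks directly that the subtree of $P$ over $u_{1}$ is a copy of $EC_{n,k-1}$, that contracting the inner edge $d$ gives $\partial_{d}\Omega[P]\cong\Omega[EC_{n,k}]$, and that chopping off the outer root vertex $u_{2}$ gives $\partial_{u_{2}}\Omega[P]\cong\Omega[EC_{n,k-1}]$. Note that $\Lambda^{u}[EC_{n,k}]\to\Omega[EC_{n,k}]$ is itself \emph{not} binary extended left anodyne for $k\ge 2$ (the representable $\Omega[EC_{n,k}]$ has no room for an intermediate binary dendrex), which is precisely why one detours through $P$, whose representable does have such room.

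The construction then proceeds in three movements: (i) extend $f$, via the identification of $\Lambda^{u}[EC_{n,k}]$ with its image in $\partial_{d}\Omega[P]$, over a sufficiently large sub-dendroidal set $X\subseteq\Omega[P]$ that contains a spine of $P$; (ii) lift to a morphism $\Omega[P]\to D$, using that the dendroidal Segal core inclusion is inner anodyne and $D$ is a dendroidal Kan complex; (iii) restrict along $\Omega[EC_{n,k}]\cong\partial_{d}\Omega[P]\hookrightarrow\Omega[P]$ to obtain the desired dendrex, which extends $f$ because the colours and faces prescribed by $f$ are built in. In step (i), $X$ is assembled cell by cell: the step that creates the edge $d$ together with the binary operation $u_{2}$ is a pushout of a root horn of a binary extended corolla; the step that creates the operation $u_{1}$ (i.e.\ the $EC_{n,k-1}$-worth of data over $u_{1}$) is a pushout of a root horn of $EC_{n,k-1}$, available by induction; and all remaining steps — notably those involving the linear part $L_{n}$ and all inner faces — are pushouts of inner and left horns. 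The main obstacle, and the place where real care is needed, is the ordering of this filtration: one must add each higher cell \emph{before} adjoining the face that would complete its boundary, so that every intermediate inclusion is a genuine pushout of one of the admissible horns rather than a (forbidden) ``fill-the-whole-boundary'' extension. Checking at each stage which unique face is being attached is routine but delicate, entirely parallel to the multi-case induction carried out in the proof of Lemma~\ref{partIlemma2}, and I expect it to take up comparable space.
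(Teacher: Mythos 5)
Your overall architecture --- resolve the root vertex of $EC_{n,k}$ into two vertices joined by a new inner edge $d$, extend the given horn over the resolved tree $P$ by an anodyne filtration, and restrict along the inner face $\partial_d$ --- is exactly the strategy of the paper's Lemma~\ref{partIIlemma1}; the only structural difference is that you split off the single leaf $b_k$ and induct on $k$, whereas the paper splits off all of $b_1,\dots,b_k$ at once into a $k$-corolla with output $d$, so that the remaining bottom piece is literally a binary extended corolla $EC_{n,1}$ and no induction on $k$ is needed. That variation would be acceptable. The genuine gap is that the two concrete cell-attachment steps you commit to are incompatible with each other. If you create $d$ together with $u_2$ first, via the root horn $\eta_{b_k}\sqcup\eta_c\to\Omega[C_2]$ (with $d$ appearing as the fresh omitted leaf), you must afterwards produce the operation $u_1$ with \emph{all} of its colours $a_n,b_1,\dots,b_{k-1},d$ already prescribed, and no generator at your disposal does that: a leaf horn of $C_k$ creates an operation with a fresh output, a root horn of $C_k$ creates one with a fresh input, and for $n\ge 1$ the root horn $\Lambda^{u_1}[EC_{n,k-1}]$ already \emph{contains} the corolla $u_1$ among its faces, so filling it presupposes $u_1$ rather than creating it (it only supplies the top-dimensional face $\partial_{u_2}\Omega[P]$). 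If instead you create $u_1$ and $d$ first via the leaf horn $\coprod\eta\to\Omega[C_k]$, then $u_2$ must be produced with all three colours $d,b_k,c$ pinned down, and the root horn of $C_2$ prescribes only two of them.

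The missing mechanism --- which is exactly what the paper uses in the guise of the leaf horns $\Lambda^{v}[T_{i_0,\dots,i_l}]$ of two-vertex subtrees in Lemma~\ref{partIIlemma1} --- is to create the second operation by filling a \emph{leaf} horn of the two-vertex subtree $Q$ of $P$ spanned by $u_1$ and $u_2$: the horn $\Lambda^{u_1}[Q]$ consists of the inner face $\partial_d Q$ (the root corolla of $EC_{n,k}$, which lies in $\Lambda^{u}[EC_{n,k}]$ when $n\ge 1$) together with the root face of $Q$ (the corolla $u_1$ just created); it omits exactly the corolla $u_2$, and it is a non-root horn, hence left anodyne. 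This creates $u_2$ with the correct colours \emph{and} the compatibility with the prescribed composite that your step (iii) silently relies on. With that repair your induction goes through, but the case $n=0$, where the root horn of $C_{k+1}$ is a disjoint union of $\eta$'s and the composite corolla is not available, needs separate treatment. Two smaller points: invoking the Segal core inclusion in step (ii) is the wrong tool, since you need the extension $X\to\Omega[P]$ to be anodyne relative to the prescribed $f$-data rather than merely to exist over a spine; and your assertion that $\Lambda^u[EC_{n,k}]\to\Omega[EC_{n,k}]$ is not itself binary extended left anodyne for $k\ge 2$ is unproved, though it is only used as motivation.
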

\begin{proof}
One direction is a special case and thus trivial. Hence assume $D$ admits fillers for all root horns of extended corollas $EC_{n,1}$. Then $D$ admits lifts against all binary extended left anodynes (see Definition \ref{binextleft}). We need to show that $D$ admits lifts against the root horn inclusion
$\Lambda^u [EC_{n,k}] \to \Omega[EC_{n,k}]$. By Lemma \ref{partIIlemma1} we find a tree $T$ and a morphism $\Omega[EC_{n,k}] \to \Omega[T]$ such that the composition $\Lambda^u [EC_{n,k}] \to \Omega[T]$ is binary extended left anodyne. Thus given a morphism $\Lambda^u [EC_{n,k}] \to D$ we {can} find a filler $\Omega[T] \to D$. But the composition $\Omega[EC_{n,k}] \to \Omega[T] \to D$ is then the desired lift.
\end{proof}

\begin{lemma}\label{partIIlemma1}
Consider the inclusion of the root horn of the extended corolla  $\Lambda^u [EC_{n,k}] \to \Omega[EC_{n,k}]$. There is a tree $T$ and a morphism $\Omega[EC_{n,k}] \to \Omega[T]$ such that the composition  $\Lambda^u [EC_{n,k}]\to \Omega[T]$ is a binary extended left anodyne map.
\end{lemma}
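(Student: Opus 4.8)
The plan is to take $T$ to be the \emph{binary expansion} of $EC_{n,k}$: keep the stalk $w_1,\dots,w_n$ sitting on top of the edge $a_n$ and replace the $(k{+}1)$-ary root vertex $u$ by a comb of $k$ binary vertices $v_1,\dots,v_k$ with new inner edges $d_1,\dots,d_{k-1}$, where $v_1$ has inputs $a_n,b_1$ and output $d_1$, the vertex $v_j$ has inputs $d_{j-1},b_j$ and output $d_j$ for $2\le j\le k-1$, and $v_k$ has inputs $d_{k-1},b_k$ and output $c$. Let $\phi\colon\Omega[EC_{n,k}]\to\Omega[T]$ be the morphism induced by the operad map which is the identity on the stalk generators and sends the generating operation of $u$ to the total composite $v_k\circ_1 v_{k-1}\circ_1\cdots\circ_1 v_1$ in $\Omega(T)$. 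Contracting all the edges $d_1,\dots,d_{k-1}$ of $T$ merges $v_1,\dots,v_k$ back into a single $(k{+}1)$-ary vertex, so $\phi$ is exactly the iterated inner face inclusion $\partial_{d_1}\cdots\partial_{d_{k-1}}\Omega[T]\hookrightarrow\Omega[T]$; in particular it is a normal monomorphism, and the composite $\Lambda^u[EC_{n,k}]\to\Omega[T]$ identifies the root horn of $EC_{n,k}$ with the corresponding horn of that iterated inner face of $T$, viewed as a subobject of $\Omega[T]$.

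Next I would prove, by induction on $k$, that the map $\Lambda^u[EC_{n,k}]\to\Omega[T]$ just described is binary extended left anodyne. The case $k=1$ is immediate: then $T=EC_{n,1}$, $\phi$ is the identity, and the map is literally a generating binary extended left anodyne. For $k\ge 2$ I would exhibit $\Omega[T]$ as the last term of a finite filtration starting from the image of the root horn, each stage being a pushout along a coproduct of inner horn inclusions, of non-root outer horn inclusions (which for the corolla $C_2$ also includes its leaf horn), or of root horn inclusions of binary extended corollas $EC_{m,1}$ (with $m=0$ giving the localizing map $s$). The new non-degenerate dendrices of $\Omega[T]$ to be adjoined are exactly those that see at least one of the new edges $d_j$ as an uncontracted edge, together with the omitted face $\partial_u EC_{n,k}=L_n$ and its sub-dendrices; the point of using the \emph{full} binary expansion is that every subtree of $T$ that has to be attached along a root horn is a binary extended corolla, namely one of the subtrees $\{w_i,\dots,w_n,v_1\}\cong EC_{n-i+1,1}$ lying over $d_1$ (the chopped-off face of such a subtree being the stalk $L_{n-i+1}$, so that the attaching horn is precisely its root horn), while every subtree with a binary root vertex that carries a further binary vertex has that further vertex available as a non-root outer face, and the ``composite'' dendrices obtained by contracting some $d_j$ are adjoined together with their uncontracted versions through inner horns. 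The natural way to sequence all of this is a double filtration indexed by comb position and by the number of vertices, in the spirit of the $A_k=B^k_l$ filtration in Lemma \ref{partIlemma2}; one can alternatively organize the induction around the grafting decomposition $T=\tilde T*_{d_{k-1}}C_2$, with $\tilde T$ the binary expansion of $EC_{n,k-1}$.

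The hard part will be this bookkeeping. One must fix a linear order on the adjoined dendrices so that at every stage the intersection of the already-built dendroidal set with the representable of the next dendrex is \emph{exactly} a horn of one of the three allowed types — in particular so that no step is forced to be a root horn of a tree outside the allowed class, and so that each new edge $d_j$ is introduced along a horn that omits it (either as a leaf of a bottom corolla $\{v_{j+1}\}\cong C_2$, via a copy of $s$, or as the root of $\{v_j\}\cong C_2$, via its leaf horn). Since the root edge $c$ of $T$ and all the original edges of $EC_{n,k}$ are already present in the image of the root horn, the corollas of the binary vertices near the root require particular care; verifying the compatibility of all the face identifications that arise, exactly as in Lemma \ref{partIlemma2} and in the lemmas of Section \ref{partIII}, is the technical core of the proof. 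Once the ordering is fixed, each individual step is a routine pushout computation.
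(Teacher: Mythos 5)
Your choice of $T$ is genuinely different from the paper's, and the difference matters. The paper does not take the full binary expansion of the root vertex: it inserts a \emph{single} new edge $d$, bundling all the leaves $b_1,\dots,b_k$ into one $k$-ary vertex $v$ sitting above $d$, so that the root vertex $u$ of $T$ becomes binary with inputs $a_n$ and $d$. With that choice the filtration is short and every step is manifestly of an allowed type: one adjoins the corolla $(b_1,\dots,b_k,d)$ along its leaf horn, then the subtrees containing $v$ along non-root outer horns $\Lambda^v$, then the binary extended corolla $(a_0,\dots,a_n,d,c)$ along its root horn (a generating binary extended left anodyne), and finishes with the inner horn $\Lambda^d[T]$. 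Your comb $T$ introduces $k-1$ new edges and forces you to adjoin far more subtrees, and this is where your outline has a genuine gap: the iterated root faces $T^{(j)}:=\partial_{v_k}\cdots\partial_{v_{j+1}}T$ (root $d_j$, root vertex $v_j$) are unavoidable --- $T^{(k-1)}$ is a face of $T$ needed before any final horn filling of $\Omega[T]$, and each $T^{(j)}$ is a face of $T^{(j+1)}$ --- yet for $j\ge 2$ they are trees admitting root horns that are \emph{not} binary extended corollas, contradicting your claim that every subtree attached along a root horn is one. They must instead be reached through inner horns such as $\Lambda^{d_1}[T^{(j)}]$, which requires all their other faces (root faces, all $\partial_{a_i}$-contractions, all partial $\partial_{d_i}$-contractions producing higher-arity comb vertices, the top faces, and the omitted linear face $L_n$) to be present at exactly the right stage. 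None of this ordering is constructed or verified.

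More fundamentally, you explicitly defer what you yourself identify as ``the technical core of the proof'': fixing the linear order on the adjoined dendrices and checking that each intersection with the already-built subobject is \emph{exactly} a horn of an allowed type. In a lemma of this kind that verification \emph{is} the proof --- the statement is purely combinatorial and the only content is the existence of such a filtration. As written, the proposal is a plan whose feasibility is asserted but not demonstrated, and at least one of its structural claims (the one about root-horn attachments) fails as stated. I would either redo the argument with the paper's single-edge insertion, where the bookkeeping collapses to four easily checked steps, or, if you want to keep the binary comb, actually write out the triple induction (over comb position, over contracted stalk edges, over contracted comb edges) and verify each intersection; your remark about organizing the induction around $T=\tilde T*_{d_{k-1}}C_2$ is a reasonable starting point for the latter but is currently only a remark.
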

\begin{proof}

We use the labels for edges of the extended corolla $EC_{n,k}$ as given in the Definition \ref{ext_corolla} and in addition we denote its root vertex by $u$.
Now consider the tree $T$
\begin{equation*} \xymatrix@R=10pt@C=12pt{
&&&&&&&\\
& *=0{\bullet} \ar@{-}[u]^{a_0} &&&&&& \\
& *=0{\bullet} \ar@{-}[u]^{a_1} && \qquad .. &&&& \\
T = \qquad & \dots \ar@{-}[u] && *=0{~\bullet_v}\ar@{-}[ul]_{b_1}\ar@{-}[urr]^{b_k} &&&& \\
& *=0{\bullet} \ar@{-}[u]^{a_{n-1}} &&&&&& \\
&&&*=0{~\bullet_u}\ar@{-}[uu]_{d}\ar@{-}[ull]^{a_n}&&&&\\
&&&*=0{}\ar@{-}[u]^c &&&&
}
\end{equation*}
There is an obvious morphism $\Omega[EC_{n,k}] \to \Omega[T]$. We will show that the composition $\Lambda^u [EC_{n,k}]\to \Omega[T]$  is binary extended left anodyne. \\

\noindent We set $E_0:=\Lambda^u [EC_{n,k}]$. Let $C_k$ be a corolla with root $d$ and leaves $b_1,...,b_k$

\begin{equation*}  \xymatrix@R=10pt@C=12pt{
&&&&&&& \\
C_k = \qquad &&&*=0{\bullet}\ar@{-}[ul]_{b_2}\ar@{-}[ull]^{b_1}\ar@{-}[urrr]^{b_k}&&&&\\
&&&*=0{}\ar@{-}[u]^d &&&&
}
\end{equation*}
Set $E_1:=E_0\cup \Omega[C_k]$ which is a subobject of $\Omega[T]$. The map $E_0\to E_1$ {is a pushout of the map $\coprod_{i=1}^k \eta_{b_i} \to \Omega[C_k]$, so it is left anodyne by definition.}

As a next step consider subtrees of $T$ which are of the form
$$  \xymatrix@R=10pt@C=12pt{
&&&&&&&\\
& *=0{\bullet} \ar@{-}[u]^{a_{i_0}} &&&&&& \\
& *=0{\bullet} \ar@{-}[u]^{a_{i_1}} && \qquad \dots &&&& \\
T_{i_0,...,i_l} = \qquad & \dots \ar@{-}[u] && *=0{~\bullet_v}\ar@{-}[ul]_{b_1}\ar@{-}[urr]^{b_k} &&&& \\
& *=0{\bullet} \ar@{-}[u]^{a_{i_{l-1}}} &&&&&& \\
&&&*=0{~\bullet_u}\ar@{-}[uu]_{d}\ar@{-}[ull]^{a_{i_l}}&&&&\\
&&&*=0{}\ar@{-}[u]^c &&&&
}
$$
for $\{i_0,...i_l\}\subset\{0,1,...,n\}$ and $l\leq n-1$. We define dendroidal sets $E_{l+2}$ as the union of $E_{l+1}$ and all representables $\Omega[T_{i_0,...,i_l}]$ for $\{i_0,...i_l\}\subset\{0,1,...,n\}$ and $0\leq l\leq n-1$. Thus we get a filtration
\begin{equation}\label{fillemma71}
\Lambda^u [EC_{n,k}] = E_0 \subset E_1 \subset E_2 \subset ... \subset E_{n+1} \subset \Omega[T].
\end{equation}
For a fixed $l\leq n-1$ and a subset $\{i_0,...,i_l\}$ the inner face $\partial_d\Omega[T_{i_0,...,i_l}]$ is contained in $E_0$ and the faces $\partial_{a_j}\Omega[T_{i_0,...,i_l}]$ are contained in $E_{l+1}$ for every $j\in\{i_0,...,i_n\}$ (and for $l=0$ the face $\partial_u\Omega[T_{i_0}]$ is in $E_1$).

Since $\partial_v\Omega[T_{i_0,...,i_l}]$ is not in $E_{l+1}$ we have the following pushout diagram
\begin{equation*}
\xymatrix{\coprod \Lambda^{v} [T_{i_0,...,i_l}]\ar[r] \ar[d] & E_{l+1} \ar[d] \\ \coprod \Omega[T_{i_0,...,i_l}] \ar[r] & E_{l+2}}
\end{equation*}
where the coproduct varies over all possible $(i_0,..., i_l)$. This shows that $E_{l+1}\to E_{l+2}$ is left anodyne.
From this we conclude that all maps in the above filtration \eqref{fillemma71} except for the last inclusion are left anodyne and therefore also the  map $E_0\to E_{n+1}$ is left anodyne. \\

\noindent We proceed by observing that for the tree
$$  \xymatrix@R=10pt@C=12pt{
&&&&&&& \\
& *=0{\bullet} \ar@{-}[u]^{a_0} &&&&&& \\
V= & \dots \ar@{-}[u]^{a_1} &&&&&& \\
& *=0{\bullet} \ar@{-}[u]^{a_{n-1}} &&&&&& \\
&&*=0{~\bullet_u} \ar@{-}[ul]^{a_n} \ar@{-}[ur]_{d} &&&&&\\
&&*=0{}\ar@{-}[u]^{c} &&&&& \\
}
$$
all faces of $\Omega[V]$ are in $E_{n+1}$ except $\partial_u \Omega[V]$. Notice that $E_{n+1} \cup \Omega[V]=\Lambda^d [T]$.
The map $E_{n+1}\to \Lambda^d[T]$ is the pushout of the binary extended left anodyne map $\Lambda^{u} [V]\to \Omega[V]$, so it is binary extended left anodyne. 
Finally, since ${\Lambda^d\Omega[T]} \to \Omega[T]$ is inner anodyne, we conclude that $E_0\to \Omega[T]$ is binary extended left anodyne.
\end{proof}

\section{Proof of Theorem \ref{THMFIBRANT}, part III}\label{partIII}

{Similarly to Definition \ref{binextleft} of binary extended left anodynes we define two more classes.}

\begin{definition}
The weakly saturated class generated by non-root horns of all trees and root horns of extended corollas is called the class of \emph{extended left anodynes}. The weakly saturated class generated by all horn inclusions of trees is called the class of \emph{outer anodynes}.
\end{definition}

It would be more natural to call outer anodynes simply anodynes since it also includes the inner anodynes. But, in order to make the distinction clearer, we use the term outer anodynes here. By definition we have inclusions
\begin{align*}
\{\text{inner anodynes}\}  \subset \{\text{left anodynes}\}  \subset \{\text{binary ext. left anodynes}\} \\ 
\subset \{\text{ext. left anodynes}\} \subset \{\text{outer anodynes}\} 
\end{align*}
All of these inclusions are proper, except for the last one. {In the following proposition we show that the last inclusion is actually an equality.}

\begin{proposition}\label{PROP3}
The class of extended left anodynes and the class of outer anodynes coincide.
In particular, a dendroidal set $D$ admits lifts against all non-root horns and root horns of extended corollas if and only if it is fully Kan.
\end{proposition}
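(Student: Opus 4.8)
The plan is to show that every root horn inclusion of an \emph{arbitrary} tree is extended left anodyne; since all non-root horns are already among the generators of the extended left anodynes, this gives the reverse inclusion $\{\text{outer anodynes}\} \subset \{\text{extended left anodynes}\}$, and the other inclusion is the chain of inclusions displayed before the statement. Recall from the remark after Definition \ref{ext_corolla} that a root horn $\Lambda^a[T]$ can only occur when $a$ is the unique vertex attached to the root, and that (for a tree with at least two vertices) this forces $T$ to have a very restricted shape: everything above the root vertex $u$ sits over a single leaf of $u$. Concretely, such a $T$ is obtained by grafting some tree $S$ onto one leaf of a corolla $C_k$ whose root is the root of $T$; the other $k-1$ leaves of that corolla are leaves of $T$. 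So the first step is to reduce, by this structural observation, to analysing root horns of trees of this grafted form.

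The key step is then an induction on the structure of $S$ (say on the number of vertices of $S$), mimicking the filtration arguments of Lemmas \ref{partIlemma2} and \ref{partIIlemma1}. For the base case $S = \eta$ the tree $T$ is exactly an extended corolla $EC_{n,k}$ (in fact with the linear stalk absent, i.e.\ $EC_{0,k}=C_{k+1}$, or more generally the pieces produced when $S$ is itself linear are handled by Proposition \ref{PROP2}/Lemma \ref{partIIlemma1}), and the root horn inclusion is by definition extended left anodyne. For the inductive step one writes $\Omega[T]$ as obtained from $\Lambda^u[T]$ by successively adjoining representables of the subtrees of $T$, ordered so that each newly added representable $\Omega[U]$ is attached along a horn $\Lambda^b[U]$ where $b$ is either an inner edge, or a non-root outer vertex of $U$, or — in the one place where it is genuinely needed — a root vertex of a tree $U$ that is itself an extended corolla. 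Each such attachment is a pushout of a generator of the extended left anodynes, hence extended left anodyne, and $\Lambda^u[T]\to\Omega[T]$ is the composite. The delicate bookkeeping is exactly as in Lemma \ref{partIIlemma1}: one must check that all faces of each $\Omega[U]$ other than the designated one $\partial_b\Omega[U]$ already lie in the previously constructed stage, which is where the combinatorics of subtrees of a grafted tree is used.

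The main obstacle I expect is organising the filtration so that the \emph{only} root horns appearing as attaching maps are root horns of extended corollas, rather than of more complicated trees — that is, pushing all the ``non-extended-corolla'' complexity of $T$ into the inner and left-anodyne part of the filtration. This is precisely the role played in Lemma \ref{partIIlemma1} by the auxiliary tree $T$ (with the extra binary vertex $v$) and the tree $V$: one factors the problematic horn through a cleverly chosen bigger tree whose relevant horn \emph{is} of extended-corolla type and for which the remaining inclusion is inner anodyne. For the general case one iterates this device along the stalk $S$, repeatedly splitting off extended corollas $EC_{m,1}$ at the root and absorbing the rest via left and inner anodynes. Once the equality of anodyne classes is established, the final ``in particular'' is immediate: a dendroidal set has the right lifting property against all outer anodynes iff it has it against all non-root horns and root horns of extended corollas, and the former is by definition being fully Kan.
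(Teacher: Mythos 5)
Your overall strategy is the same as the paper's: both reduce the claim to showing that the root horn of a tree $U$ obtained by grafting a tree $S$ onto one leaf of a corolla is extended left anodyne, and both propose to do this by an induction combined with a filtration of $\Omega[U]$ by representables of subtrees, arranged so that the only root-horn attaching maps are root horns of extended corollas or are supplied by the inductive hypothesis. The ``in particular'' clause is handled correctly.

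However, the inductive step --- which is the entire content of the paper's Lemma \ref{partIIIlemma2} and its preparatory material --- is left as ``delicate bookkeeping exactly as in Lemma \ref{partIIlemma1}'', and as sketched it does not close. Two concrete problems. First, the induction parameter: you induct on the number of vertices of the grafted tree $S$, but the subtrees to which the inductive hypothesis must be applied are the initial segments (of an enlargement of $U$) containing the whole linear stem, and the largest of these have a grafted part with exactly as many vertices as $S$, so the parameter does not decrease; the paper instead inducts on the number of vertices of the \emph{tree top} of $S$ (the maximal subtree with non-unary root, i.e.\ what sits above the maximal linear initial segment), which does strictly decrease for those initial segments. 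Second, the mechanism: ``iterating the device of Lemma \ref{partIIlemma1} along the stalk, repeatedly splitting off extended corollas $EC_{m,1}$'' is not the right move --- that device converts a $k$-ary root vertex into a binary one; it does not reduce an arbitrarily branching tree top to a linear one. What the paper actually does is insert a single auxiliary edge $a'$ between the stem and the tree top, so that $\Lambda^u[U]\to\Omega[U]$ becomes a retract of $\Lambda^u[U]\to\Omega[W]$ for the enlarged tree $W$; the minimal initial segment of $W$ containing $a'$ is then an extended corolla $EC_{l+1,q}$ (of arbitrary arity, not binary), the larger initial segments are attached along root horns covered by the inductive hypothesis, and everything else is filled by inner and left anodynes. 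Moreover, verifying that the intersections arising in this filtration really are the claimed horns requires a separate combinatorial lemma (the ``codimension argument'' about initial subtrees of a given codimension) that has no analogue in Lemma \ref{partIIlemma1} and is not anticipated in your sketch. In short, you have identified the correct statement to prove and the correct shape of the argument, but the proof of that statement is genuinely missing.
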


\begin{proof}
By the above inclusion of saturated classes it suffices to show that {every} root horn inclusion is contained in the class of extended left anodynes. A root horn for a tree exists only if this tree is {obtained by grafting a smaller tree} on a corolla. {We give the proof of this technical statement in Lemma \ref{partIIIlemma2}.}
\end{proof}

Before we can prove the crucial lemma we need to introduce some terminology. Recall from \cite{MW09} that \emph{a top face map} is an outer face map with respect to a top vertex and \emph{an initial segment} of a tree is a {subtree} obtained by composition of top face maps.
For example, the  {tree $V$ is an initial segment of the tree $T$ in the following picture.}
\begin{equation*}  \xymatrix@R=10pt@C=12pt{
&&&&&&\\
&&&_{g_1}&_{g_2}&_{g_3}& \\
V = \qquad & &&_{a_3}& *=0{\bullet} \ar@{-}[ul]\ar@{-}[u]\ar@{-}[ur]\ar@{-}_{t_2}&& \\
&&*=0{~\bullet_v}\ar@{-}[ull]^{a_1}\ar@{-}[ul]_{a_2}\ar@{-}[ur]\ar@{-}[urr]_{a_4}&&&&\\
&&*=0{}\ar@{-}[u]^a &&&&
} \quad 
\xymatrix@R=10pt@C=12pt{
_{f_1}&&_{f_2}&&&&\\
& *=0{~~\bullet_{t_1}} \ar@{-}[ul]  \ar@{-}[ur] &&_{g_1}&_{g_2}&_{g_3}& \\
T = \qquad  & *=0{\bullet} \ar@{-}[u]^{e} &&_{a_3}& *=0{\bullet} \ar@{-}[ul]\ar@{-}[u]\ar@{-}[ur]\ar@{-}_{t_2}&& \\
&&*=0{~\bullet_v}\ar@{-}[ull]^{a_1}\ar@{-}[ul]_{a_2}\ar@{-}[ur]\ar@{-}[urr]_{a_4}&&&&\\
&&*=0{}\ar@{-}[u]^a &&&&
}
\end{equation*}

\begin{definition}
{A subtree which is a composition of an initial segment followed by exactly $k$ inner face maps is called an \emph{initial subtree of codimension $k$}.}
\end{definition}
By definition, every initial segment is an initial subtree of codimension zero.
An example of {an initial} subtree of codimension 2 of the {above} tree $T$  is 
\begin{equation*}  \xymatrix@R=10pt@C=12pt{
& _{f_1}&&_{f_2} &&&&&& \\
&& *=0{\bullet} \ar@{-}[ul]  \ar@{-}[ur] &_{a_3}&_{g_1}&_{g_2}&_{g_3}& \\
&&&*=0{~\bullet_v}\ar@{-}[ull]^{a_1}\ar@{-}[ul]\ar@{-}[u] \ar@{-}[ur] \ar@{-}[urr] \ar@{-}[urrr]  &&&&\\
&&&*=0{}\ar@{-}[u]^a &&&&
}
\end{equation*}

\begin{lemma}[Codimension argument]
Let $T$ be a tree and $v$ a vertex of $T$. Let $V$ be the maximal initial segment of $T$ for which the input edges $d_1,...,d_p$ of $v$ are leaves. Let $X_T$ be a subobject of $\Omega[T]$ defined in the following way:
If $V$ has at least two vertices, then $X_T$ is the 
union of the following dendroidal sets
\begin{itemize}\setlength{\itemsep}{-0.5ex}
\item the representable $\Omega[V]$,
\item the inner faces $\partial_e\Omega[T]$ for all inner edges $e$ of $V$,
\item the outer faces $\partial_u\Omega[T]$ for vertices $u$ of $V$, $u\neq v$.
\end{itemize}
If $V$ has exactly one vertex, then $X_T$ is the union of the following dendroidal sets
\begin{itemize}\setlength{\itemsep}{-0.5ex}
\item the representable $\Omega[V]$,
\item the representable of the maximal subtree of $T$ having $d_i$ as root for $i=1,..,p$.
\end{itemize}
Then the inclusion $X_T \to \Omega[T]$ is inner anodyne.

\end{lemma}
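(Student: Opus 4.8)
The plan is to exhibit $X_T \to \Omega[T]$ as a (possibly transfinite) composition of pushouts of inner horn inclusions $\Lambda^e[S] \to \Omega[S]$, so that it lies in the weakly saturated class of inner anodynes. First I would identify the non-degenerate dendrices of $\Omega[T]$ that do not already lie in $X_T$. Writing such a dendrex as a pair $(S,I)$, a subtree $S \subseteq T$ together with a set $I$ of inner edges of $S$ to be contracted, and using the description of inner and outer faces, one checks that $(S,I)$ fails to lie in $X_T$ exactly when $V \subsetneq S$ (so $v$ and all of $V$ occur in $S$) and $I$ contains no inner edge of $V$. The point is that a dendrex omitting a vertex of $V$, or contracting an inner edge of $V$, is absorbed by one of the listed faces, a dendrex contained in $V$ is absorbed by $\Omega[V]$, and in the one-vertex case a subtree sitting above a single $d_i$ is absorbed by the extra representable; when $V=T$ there are no such \emph{new} dendrices and the map is an isomorphism.

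Next I would organize the new dendrices into pairs. Since $V \subsetneq S$, at least one input edge $d_i$ of $v$ is an inner edge of $S$; let $d_{i_0}$ be the one of smallest index (this depends only on $S$). This gives a fixed-point-free pairing $(S,I) \leftrightarrow (S, I \cup \{d_{i_0}\})$ of the new dendrices, in which the member with $d_{i_0} \notin I$ (the \emph{large} one, whose underlying tree $S/I$ then has at least two vertices) has the other as its $\partial_{d_{i_0}}$-face. I would then filter $X_T$ by attaching, for $n = 2, 3, \dots$ in this order, the pushout of the coproduct of inner horns $\coprod \Lambda^{d_{i_0}}[S/I] \to \coprod \Omega[S/I]$ taken over all large new dendrices $(S,I)$ with $S/I$ having $n$ vertices; each such pushout adjoins precisely the large member and its paired small member, and distinct large members have distinct $\partial_{d_{i_0}}$-faces, so the coproduct behaves well.

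The heart of the argument, and the main obstacle, is to verify that for a large new dendrex $(S,I)$ every face of $S/I$ other than $\partial_{d_{i_0}}$ already lies in the previous stage: (i) an inner face contracting an inner edge of $V$ lies in the corresponding $\partial_e\Omega[T] \subseteq X_T$; (ii) the remaining inner faces, and the top outer faces chopping vertices of $S$ that lie strictly above the $d_i$, are again new dendrices with strictly fewer vertices, still large since $d_{i_0}$ stays uncontracted, hence already attached; (iii) because $d_{i_0}$ is uncontracted, the image of $v$ in $S/I$ carries the non-leaf input $d_{i_0}$, so it is never chopped by an outer face; and (iv) an outer face chopping a vertex $u \neq v$ of $V$, or chopping the root vertex, lands in $\partial_u\Omega[T]$, and here the maximality of $V$ forces such a $u$ to have exactly one inner edge in $T$, so that $\partial_u\Omega[T]$ genuinely is one of the faces listed in $X_T$. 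The subtlety is entirely bookkeeping: tracking which $S$-vertices get merged by $I$, ruling out $v$-in-the-chopped-blob, and extracting the right consequence of maximality of $V$. Granting (i)–(iv), the source of each attaching horn sits in the previous stage, the horns are inner, and weak saturation gives that $X_T \to \Omega[T]$ is inner anodyne; the one-vertex case of $V$ runs the same way with $\mathrm{inner}(V)=\varnothing$ and the representables $\Omega[T_i]$ playing the role of the outer faces.
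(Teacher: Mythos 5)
Your pairing is exactly the one the paper uses --- each new dendrex $(S,I)$ is matched with $(S,I\cup\{d_{i_0}\})$ where $d_{i_0}$ is the minimal input of $v$ that is inner in $S$, and the large member is attached along the inner horn $\Lambda^{d_{i_0}}[S/I]$ --- so the strategy is the right one. But there is a genuine gap in the filtration: indexing the stages only by the number of vertices of $S/I$ is too coarse, and the assertion in your step (ii) that the top outer faces of a large dendrex are ``still large since $d_{i_0}$ stays uncontracted'' is false. The index $i_0$ depends on the subtree: if $w$ is the unique vertex of $S$ lying above $d_{i_0}$ and you pass to the outer face chopping $w$, then $d_{i_0}$ becomes a leaf of $S'=\partial_w S$ and the minimal input of $v$ that is inner in $S'$ is some $d_j$ with $j>i_0$, which may well lie in $I$. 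Concretely, take $p=2$ and let $T$ have exactly one vertex $w$ above $d_1$ and at least one vertex above $d_2$. The dendrex $(T,\{d_2\})$ is new and large and is attached at your stage $n=|T|-1$; its outer face at $w$ is $(\partial_w T,\{d_2\})$, which is new and \emph{small}, its large partner being $(\partial_w T,\emptyset)$ --- a cell with $|T|-1$ vertices, hence attached at the \emph{same} stage $n$. So the horn $\Lambda^{d_1}[T/d_2]$ contains a dendrex that is not present after stage $n-1$, and the attaching map of your coproduct of horns does not factor through the previous stage.

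The repair is precisely the paper's refinement: within each vertex-count level $n$ one filters further by the codimension $k=|I|$, attaching the cells with $k=0$ first, then $k=1$, and so on. An outer face of a cell of codimension $k$ has codimension at most $k$, so its large partner has codimension at most $k-1$ and that pair is adjoined at an earlier substage of the same level; inner faces drop the vertex count and land in an earlier level regardless. With this lexicographic order every face other than $\partial_{d_{i_0}}$ really does lie in the preceding stage. Apart from this ordering issue, your identification of the new dendrices and the verifications (i), (iii), (iv) agree with the paper's argument.
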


\begin{proof}
Let $|V|$ and $|T|$ {denote} the number of vertices of $V$ and $T$, respectively. Let $N=|T|-|V|+1$. We say that an initial subtree $S$ of codimension $k$ of $T$ containing $V$ is an \emph{$(n,k)$-subtree} if it has exactly $|V|-1+n$ vertices.
Note that $V$ is a $(1,0)$-subtree.

Denote $X_{1,0}:=X_T$. The strategy is to form an inner anodyne filtration consisting of dendroidal sets $X_{(n,k)}$ by considering unions of $X_{1,0}$ with {some} $(n,k)$-subtrees of $T$ for all $n, 1\leq n\leq N$ and $k, 0\leq k \leq {N-n}$.  

{
Before constructing this filtration, we form a set $\F_{n,k}$ of \emph{chosen} $(n,k)$-subtrees which we do not include in $X_{n,k}$ for each pair $(n,k)$.      
We start with the tree $T$ which is an $(N,0)$-subtree and we choose $\partial_{d_i}T$ for $i$ being minimal such that $d_i$ is an inner edge of $T$. The set $\F_{N-1,1}$ has only one element $\partial_{d_i}T$ and $\F_{N-1, 0}$ is empty. We proceed inductively by decreasing $n$ from $N$ to $1$. Each $(n+1,k-1)$-subtree $S$ which is not in $\F_{n+1,k-1}$ contains at least one inner edge $d_j$, $j\in \{1,...,p\}$ and we choose $\partial_{d_i} S$ for minimal such $i$ and put this $(n,k)$-subtree $\partial_{d_i} S$ in $\F_{n,k}$.  
}

Note that for $n=2, k\geq 1$ such a subtree $S$ has exactly $|V|+1$ vertices and only one inner face {$\partial_{d_i}S$ and that face belongs to $\F_{1,k}$.} {Hence $X_{1,0}=X_T$. We define $X_1=X_T$ and for $2\leq n\leq N$ we inductively define $X_{n,0}$ as the union of $X_{n-1}$ and the representables of all $(n,0)$-subtrees, $X_{n,k}$ as the union of $X_{n,k-1}$ and the representables of all $(n,k)$-subtrees that are not in $\F_{n,k}$ and dendroidal sets $X_n$ as the union $\ds \bigcup_{k=0}^{{N-n}} X_{n,k}$.}

The inclusions $X_{n-1}\to X_{n,0}$ are all inner anodyne because each of them is a pushout of the coproduct of inner horn inclusions.
More precisely, each $(n,0)$-subtree $S$ has faces which are in $X$ by definition, outer faces that are $(n-1,0)$-subtrees and hence are all in $X_{n-1}$, inner faces which are $(n-1,1)$-subtrees and by definition exactly one of them was chosen to be in $F_{n-1,1}$, so is not in $X_{n-1}$. Denote this inner face by $\partial_s S$.
We have the pushout diagram  (where the coproduct is taken over all $(n,0)$-subtrees)
$$\xymatrix{\coprod \Lambda^{s}[S] \ar[r] \ar[d] & X_{n-1} \ar[d] \\ \coprod \Omega[S] \ar[r] & X_{n,0}}$$
Note that the union of representables of $(n+1,k-1)$-subtrees and $X_{n+1,k-1}$ will also contain the representables of elements of $\F_{n,k}$ (since the elements of $\F_{n,k}$ will be faces of the $(n+1,k-1)$-subtrees). So $X_{n+1,k-1}$ will contain representables of all $(n,k)$-subtrees.
The inclusions $X_{n+1,k-1}\to X_{n+1,k}$ are similarly shown to be inner anodyne. Faces of an $(n+1,k)$-subtree are in $X$ or $(n,k)$-subtrees (and hence all in $X_{n+1,k-1}$ by the previous sentence) or $(n,k+1)$-subtrees (and hence all but one in $X_{n,k+1}\subset X_n\subset X_{n+1,k-1}$ by construction). We again have a horn inclusion with respect to the excluded face, and $X_{n+1,k-1}\to X_{n+1,k}$ is the pushout of the coproduct of these horn inclusions.
Finally, we have shown that the inclusion $X_T=X_1\subset X_2 \subset ... \subset X_N=\Omega[T]$ is left anodyne.
\end{proof}

\begin{definition}
For a {non-linear} tree $T$ the maximal subtree having non-unary root is unique and we call it the \emph{tree top} of $T$. {For a linear tree we say that its tree top is given by its unique leaf (i.e. it is isomorphic to $\eta$).}
The maximal initial segment of $T$ which is a linear tree is also unique and we call it the \emph{stem} of $T$. Note that $T$ is obtained by grafting the tree top of $T$ to the stem of $T$ and conversely the tree top is obtained from $T$ by chopping {off} the stem. 
\end{definition}

\noindent
For a fixed tree $T$ with root $r$ we define the tree $U_{T,q}$ obtained by grafting $T$ to the $(q+1)$-corolla with leaves $r,b_1,...,b_q$, the root $c$ and the root vertex $u$.
Let $T'$ be the tree that has one edge more than $T$ such that this edge, called $a'$, is the leaf of the stem of $T'$ (and the root of the tree top of $T'$).
Let $W=W_ {T,q}$ be the tree obtained by grafting $T'$ to the $(q+1)$-corolla with leaves $r,b_1,...,b_q$, the root $c$ and the root vertex $u$.

We will usually denote by $v$ the root vertex of the tree top of $T$ and the input edges of $v$ by  $d_1,...,d_p$. We will denote by $v'$ the vertex in $W$ having the output $a'$.
The edges of the stem of $T$ will be denoted $a_0,...,a_l$ with $a_i$ and $a_{i+1}$ being the input and the output of the same vertex for all $i=0,...,l-1$ (so $a_l$ is the root). {Here is one example. }

{
\begin{equation*}  
\xymatrix@R=10pt@C=12pt{
&&&&\\
*=0{\bullet} \ar@{-}[u] &*=0{\bullet} \ar@{-}[u] && _{d_3}   & \\
 && *=0{\!\!_{v}\bullet}\ar@{-}[ull]^{d_1}\ar@{-}[ul]_{d_2}\ar@{-}[ur]\ar@{-}[urr]_{d_4}&&\\
 && *=0{\bullet}\ar@{-}[u]_{a_0}&&\\
T = && *=0{\bullet}\ar@{-}[u]_{a_1}&&\\
 && \ar@{-}[u]_{a_2}&&
} 
\xymatrix@R=10pt@C=12pt{
&&&&&&\\
*=0{\bullet} \ar@{-}[u] &*=0{\bullet} \ar@{-}[u]&&_{d_3}&&& \\
&& *=0{\!\!_{v}\bullet}\ar@{-}[ull]^{d_1}\ar@{-}[ul]_{d_2}\ar@{-}[ur]\ar@{-}[urr]_{d_4}&&&&\\
&& *=0{\bullet}\ar@{-}[u]_{a_0}&&&&\\
U_{T,2} =  && *=0{\bullet}\ar@{-}[u]_{a_1}&&_{b_1}&_{b_2}&\\
&&& *=0{\!\!_u \bullet} \ar@{-}[ul]_{a_2}\ar@{-}[ur] \ar@{-}[urr] &&& \\
&&&*=0{}\ar@{-}[u]_c  &&&
} 
 \xymatrix@R=10pt@C=12pt{
&&&&&&\\
*=0{\bullet} \ar@{-}[u] &*=0{\bullet} \ar@{-}[u]&&_{d_3}&&& \\
&& *=0{\!\!\!\!_{v'} \bullet}\ar@{-}[ull]^{d_1}\ar@{-}[ul]_{d_2}\ar@{-}[ur]\ar@{-}[urr]_{d_4}&&&&\\
&& *=0{\bullet}\ar@{-}[u]_{a'}&&&&\\
W_{T,2} = && *=0{ \bullet}\ar@{-}[u]_{a_0}&&&&\\
&& *=0{ \bullet}\ar@{-}[u]_{a_1}&&_{b_1}&_{b_2}&\\
&&& *=0{~\bullet_{u}} \ar@{-}[ul]_{a_2}\ar@{-}[ur] \ar@{-}[urr] &&& \\
&&&*=0{}\ar@{-}[u]^c &&&
}  
\end{equation*}
}

For a subset $J\subset \{0,1,...,l\}$ we denote by
\begin{itemize}
\item $U^0_J$ the unique subtree of $W$ containing the edges $d_1,...,d_p,a',b_1,...,b_q, c$ and $a_j, j\in J$.

\item $U'_J$ the maximal subtree of $W$ not containing the edges $a_j, j\in \{0,1,...,l\} \setminus J$. 

\item $T^0_J$ and $T'_J$ the root face of $U^0_J$ and $U'_J$, respectively.
\end{itemize}
Note that $T'_J$ contains the whole tree top of $T$, while $T^0_J$ only the non-unary root vertex of the tree top of $T$. 

\begin{lemma}\label{partIIIlemma2}
{Let $U$ be a tree whose root vertex $u$ is attached to exactly one inner edge. 
The inclusion $\Lambda^u [U] \to \Omega[U]$ is extended left anodyne.}
\end{lemma}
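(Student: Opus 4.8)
The plan is to realize $U$ as a grafting and then to build an explicit finite filtration of $\Lambda^u[U]\hookrightarrow\Omega[U]$ in which exactly one step is a pushout of a root horn of an extended corolla and all the other steps are pushouts of non-root horns or inner horns. First, since the root vertex $u$ of $U$ is attached to exactly one inner edge $r$, I would write $U=U_{T,q}$, where $T$ is the subtree above $r$ (with root $r$) and $b_1,\dots,b_q$ are the leaf-inputs of $u$, and decompose $T$ into its stem $a_0,\dots,a_l$ (with $a_l=r$) and its tree top. If the tree top is trivial, that is if $T$ is linear, then $U$ is literally the extended corolla $EC_{l,q}$ of Definition \ref{ext_corolla} and $\Lambda^u[U]\to\Omega[U]$ is already a generating extended left anodyne map, so from now on one may assume the tree top is non-trivial, with non-unary root vertex $v$ and input edges $d_1,\dots,d_p$.

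The device that makes the filtration run is the auxiliary tree $W=W_{T,q}$, obtained by inserting the extra edge $a'$ at the junction of the stem and the tree top; its key property is that contracting $a'$ identifies $\partial_{a'}\Omega[W]$ with $\Omega[U]$, so subtrees of $U$ may be labelled conveniently by subtrees of $W$ and the extra edge supplies the room needed to separate the handling of the stem-together-with-$u$ from the handling of the tree top. Using the representables of the subtrees $U^0_J$ and $U'_J$ (for $J\subseteq\{0,\dots,l\}$) and of their root faces $T^0_J$, $T'_J$, one builds a finite filtration $\Lambda^u[U]=Y_0\subset Y_1\subset\cdots\subset Y_N=\Omega[U]$ whose stages split into groups organized by $|J|$ and by whether a stage carries the whole tree top of $T$ (the $U'_J$-type pieces, the point being that $T^0_J$ carries only the non-unary vertex $v$ while $T'_J$ carries all of the tree top). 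At each stage the subtree $S$ being added has every face already present in the previous stage except one: its inner faces and the faces chopping off tree-top vertices land in earlier stages --- here the Codimension argument lemma is exactly what packages the inner anodyne ``regrowth'' of the tree top over a fixed stem --- while the single missing face is either a non-root (hence left) horn or an inner horn, so the stage is a pushout of a left anodyne or inner anodyne horn inclusion.

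The one place where a genuine extended-corolla generator is consumed is the stage that first introduces the root vertex $u$ together with the bottom portion of the stem: there the piece to be glued in is, after contracting $a'$, the representable $\Omega[EC_{l,q}]$, attached along its root horn $\Lambda^u[EC_{l,q}]$, which is a generating extended left anodyne map; alternatively, exactly as in the proof of Lemma \ref{partIIlemma1}, one can route this step through the binary extended corolla $W_{T,q}$ followed by an inner anodyne completion. Composing all the stages then shows $\Lambda^u[U]\to\Omega[U]$ is extended left anodyne; combined with Proposition \ref{PROP2} this is precisely what Proposition \ref{PROP3} needs.

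I expect the main difficulty to be purely combinatorial bookkeeping: one must fix, once and for all, a linear order both on the subsets $J\subseteq\{0,\dots,l\}$ and on the codimension filtration of the tree top so that the ``all but one face already present'' condition holds at every single stage, and one must verify this separately for each of the three kinds of attached subtrees ($U^0_J$, $U'_J$, and the tree-top pieces above them). The Codimension argument lemma is designed to absorb the worst of these verifications, so the remaining effort is to interleave its applications with the $J$-indexed stages and to pin down the unique stage that uses the extended-corolla generator.
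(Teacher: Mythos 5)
Your setup---writing $U=U_{T,q}$, splitting $T$ into stem and tree top, disposing of the linear case as a generating extended corolla, introducing $W$ with the extra edge $a'$, and invoking the subtrees $U^0_J$, $U'_J$, $T^0_J$, $T'_J$ together with the codimension argument---matches the paper's base case and its Step 1. But there is a genuine gap in your central claim that \emph{exactly one} stage of the filtration is a pushout of a root horn of an extended corolla while every other stage is a pushout of a non-root or inner horn. Any filtration of this kind must at some point glue in the initial segments $V$ of $W$ that contain the root vertex $u$, the whole stem, \emph{and} part (but not all) of the tree top; at the moment such a $V$ is added, its root face $\partial_u V$ is exactly the face not yet present, so $V$ can only be attached along its root horn $\Lambda^u[V]\to\Omega[V]$. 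These $V$ are not extended corollas (their tree tops are non-linear), so their root horns are not generators; they are only known to be extended left anodyne because their tree tops are strictly smaller than that of $U$. This is precisely why the paper's proof is an induction on the number $N$ of vertices of the tree top, with its Step 3 consuming one recursive application of the lemma for each intermediate initial segment. Your proposal contains no induction and no mechanism for these stages; already for $N=2$ (two stacked binary vertices above the stem) the construction needs at least two extended-corolla generator steps, one of them buried inside the recursive call, so the ``exactly one generator'' bookkeeping cannot be made to work.

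A secondary point: your filtration is stated to terminate at $\Omega[U]$, but the auxiliary edge $a'$ only does its job if you filter $\Omega[W]$ instead and then recover $\Lambda^u[U]\to\Omega[U]$ as a retract of $\Lambda^u[U]\to\Omega[W]$ via the contraction $\Omega[W]\to\Omega[U]$. Inside $\Omega[U]$ there is no inner edge $a'$ to supply the concluding inner horns $\Lambda^{a'}[T']\to\Omega[T']$ and $\Lambda^{a'}[W]\to\Omega[W]$, so the final stage of your filtration has no available horn to fill against. Both points need to be repaired before the argument closes.
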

\begin{proof}
{There is a tree $T$ and a natural number $q \geq 0$ such that $U = U_{T,q}$.}
Let $N$ be the number of vertices of the tree top $S$ of $T$ and let $l$ be the number of vertices of the stem of $T$. We show the claim by induction on $N$.

{If $N=0$ the tree $T$ is linear and the claim holds by definition of extended left anodynes.}

Fix a tree top $S$ with $N$ vertices, $N\geq 1$, and assume that the claim holds for every tree {such that the corresponding tree top has} less than $N$ vertices.
We will prove that for fixed $S$ and for every $l$, the inclusion $\Lambda^u [U] \to \Omega[U]$ is extended left anodyne.
Since  $\Lambda^u [U] \to \Omega[U]$ is a retract of  $\Lambda^u [U]\to \Omega[W]$, it is enough to show that $\Lambda^u [U]\to\Omega[W]$ is
extended left anodyne. We divide the proof into four parts. \\

\noindent{\textbf{Step 1.}} We show that the inclusion $\Lambda^u [U]\to \cup_{j=0}^l \Omega[\partial_{a_j}W]$ is left anodyne.

We denote $B_0:=\Lambda^u[U]$.
Inductively, for all $1\leq k\leq {l+1}$, we define
\begin{align*}
A'_{k-1}:=B_{k-1}\cup \bigcup_{|J|=k-1}\Omega[T^0_J], \quad A_{k}:=A'_{k-1}\cup \bigcup_{|J|=k-1}\Omega[T'_J], \\
B'_{k-1}:=A_k\cup \bigcup_{|J|=k-1}\Omega[U^0_J],  \quad B_k:=B'_{k-1}\cup \bigcup_{|J|=k-1}\Omega[U'_J].
\end{align*}
Since $A'_0=A_0\cup \Omega[T^0_\emptyset]$ and $T^0_\emptyset$ is the $p$-corolla with inputs $d_1,...,d_p$ and root $a'$, the inclusion $A_0\to A'_0$ is the pushout of $\eta_{d_1}\cup ... \cup \eta_{d_p} \to \Omega[T^0_\emptyset]$ {and hence left anodyne.} \\
Let $k$ be such that $1\leq k\leq l$. The inclusion $A_k\to B'_{k-1}$ is left anodyne because it is the pushout of the coproduct of leaf horn inclusions $\coprod_{|J|=k} \Lambda^{v'} [U^0_J] \to \coprod_{|J|=k}\Omega[U^0_J]$. The inclusion $B_k\to A'_k$ is left anodyne because it is the pushout of the coproduct of leaf horn inclusions $\coprod_{|J|=k} \Lambda^{v'} [T^0_J] \to \coprod_{|J|=k}\Omega[T^0_J]$. \\
For all trees $T'_J, |J|=k-1$, and vertex $v'$ the codimension argument gives an inner anodyne $X_{T'_J}\to \Omega[T'_J]$. {Since $X_{T'_J}$ is exactly the intersection of $A'_{k-1}$ and $\Omega[T'_J]$, the inclusion $A'_{k-1}\to A_k$ is inner anodyne as the pushout of the coproduct $\coprod_{|J|=k-1} X_{T'_J}\to\coprod_{|J|=k-1}\Omega[T'_J]$. }
{Similarly, we use the codimension argument to show that $X_{U'_J}\to \Omega[U'_J]$ is inner anodyne. As $X_{U'_J}$ is the intersection of $\Omega[U'_J]$ and $B'_{k-1}$ the inclusion $B'_{k-1}\to B_k$ is inner anodyne as the pushout of the coproduct $\coprod_{|J|=k-1} X_{U'_J}\to \coprod_{|J|=k-1}\Omega[U'_J]$.} Note that {$B_{l+1}=\bigcup_{j=0}^l \Omega[\partial_{a_j}W]$, so this completes the first step. }\\

\noindent {\textbf{Step 2.} 
Let $V_0$ be the {unique} initial segment of $W$ for which $a'$ is a leaf. We define $D_0:=\Omega[V_0] \cup \bigcup_{j=0}^l \Omega[\partial_{a_j}W]$. The map $\bigcup_{j=0}^l \Omega[\partial_{a_j}W]\to D_0$ is \emph{extended left anodyne} because it is the pushout of outer root inclusion of $\Omega[V_0]$.}\\

\noindent{\textbf{Step 3.} For $1\leq n \leq N-1$, we define the set $\mathcal{V}_n$ of all the initial segments of $W$  with exactly $n+l+2$ vertices. Furthermore we inductively define dendroidal sets $D_n=D_{n-1}\cup \bigcup_{V\in \mathcal{V}_n} \Omega[V]$. Note that all such subtrees $V\in \mathcal{V}_n$ contain $a',a_0,...,a_l,b_1,...,b_q,c$ since they are initial and they have exactly $n$ vertices more than $V_0$.}
{The outer root horn inclusion for $V\in \mathcal{V}_n$ is extended left anodyne by the inductive hypothesis. The intersection of $\Omega[V]$ and $D_{n-1}$ is the horn $\Lambda^u [V]$ because the faces $\partial_{a_j}\Omega[V], j=0,1,...,l $ are in $B_{l+1}\subset D_0$ by the previous arguments, the face $\partial_{a'} V$ is in $A_0$, and other inner and outer leaf faces are in $D_{n-1}$ by definition. We conclude that the inclusion $D_{n-1}\to D_{n}$ is also extended left anodyne because it is the pushout of $\bigcup_{V\in \mathcal{V}_n}\Lambda^u(V) \to \bigcup_{V\in \mathcal{V}_n} \Omega[V]$. } 

Note that $D_{N-1}$ contains all the faces of $W$ except the outer root face $T'$ and $\partial_{a'} W =U$. We have so far proven that $\Lambda^u [U]\to D_{N-1}$ is extended left anodyne. \\

\noindent {\textbf{{Step 4.} } We show that $D_{N-1}\to \Omega[W]$ is inner anodyne.
The intersection of $\Omega[T']$ and $D_{n-1}$ is the inner horn $\Lambda^{a'}[T']$ because the inner face $\partial_{a'} T'=T$ is not in $D_{N-1}$ and }
\begin{itemize}
\item $\partial_{a_j}  T'$ is already in $A_l$;
\item $\partial_e T'$ for inner edges $e$ of the tree top $S$ are in $D_{N-1}$ because $D_{N-1}$ contains $\partial_e W$;
\item $\partial_t T'$ for top vertices $t$ of the tree top $S$ are in $D_{N-1}$ because $D_{N-1}$ contains $\partial_t W$.
\end{itemize}
So the map $D_{N-1}\to D_{N-1}\cup \Omega[T']=\Lambda^{a'} [W]$ is inner anodyne because it is a pushout of an inner horn inclusion.
Finally, $\Lambda^{a'} [W] \to \Omega[W]$ is inner anodyne and we have shown that the inclusion $A_0\to \Omega[W]$ is extended left anodyne.
\end{proof}

\bibliographystyle{alpha}
\bibliography{Alles}

\end{document}